\theoremstyle{plain}
\newtheorem{thm}{Theorem}[section]
\newtheorem{defi}[thm]{Definition}
\newtheorem{prop}[thm]{Proposition}
\newtheorem{lem}[thm]{Lemma}
\newtheorem{cor}[thm]{Corollary}
\theoremstyle{remark}
\newtheorem{rk}[thm]{\bf Remark}
\numberwithin{equation}{section} \setlength{\oddsidemargin}{.0001in}
\newcommand{\bean}{\begin{eqnarray*}}
\newcommand{\eean}{\end{eqnarray*}}
\newcommand{\be}{\begin{equation}}
\newcommand{\ee}{\end{equation}}
\newcommand{\bd}{\begin{displaymath}}
\newcommand{\ed}{\end{displaymath}}
\newcommand{\beq}{\begin{equation}}
\newcommand{\eeq}{\end{equation}}
\newcommand{\bea}{\begin{eqnarray}}
\newcommand{\eea}{\end{eqnarray}}
\newcommand{\abs}[1]{\left\vert{#1}\right\vert}
\newcommand{\R}{\mathbb{R}}
\newcommand{\e}{\varepsilon}
\begin{document}

\author[K. Chen and P. Sternberg]{Ko-Shin Chen and Peter Sternberg
%\footnote{Research supported in part by NSF grant DMS-1101290 and a Simons Foundation Collaboration Grant}
}

\title[Ginzburg-Landau and Gross-Pitaevskii vortices ]{Dynamics of Ginzburg-Landau and Gross-Pitaevskii vortices on Manifolds } \maketitle

\begin{center}
Department of Mathematics,\\ Indiana University,
Bloomington, IN 47405, USA\\
koshchen@indiana.edu, sternber@indiana.edu
\end{center}
\begin{center}
December 5, 2012
\end{center}
\vskip.5in
\noindent
{\bf Mathematics Subject Classification:} 35R01, 35K15\\
\noindent
{\bf Keywords:} vortex motion, Ginzburg-Landau, Gross-Pitaevskii \\
\vskip.5in
\begin{abstract}
We consider the dissipative heat flow and conservative Gross-Pitaevskii dynamics associated
with the Ginzburg-Landau energy
\begin{equation*}
    E_\varepsilon(u) = \int_{\mathcal M} \frac{|\nabla_g u|^2}{2} + \frac{(1-|u|^2)^2}{4\varepsilon^2} dv_g
\end{equation*}
 posed on a
Riemannian $2$-manifold $\mathcal{M}$ endowed with a metric $g$. In the $\e \to 0$ limit, we show the vortices
of the solutions to these two problems evolve according to the gradient flow and Hamiltonian point-vortex flow respectively, associated with the renormalized
energy on $\mathcal{M}.$ For the heat flow, we then specialize to the case where $\mathcal{M}=S^2$ and study the limiting system of ODE's and establish
an annihilation result. Finally, for the Ginzburg-Landau heat flow on $S^2$, we derive some weighted energy identities.
\end{abstract}
\section{Introduction}
There is a rich and well-developed theory to describe the motion of vortices arising in
both the heat flow and Schr\"{o}dinger dynamics associated with the Ginzburg-Landau energy.
The setting for much of this work has been on a bounded planar domain or all of $\R^2$, though by now
there have also been several efforts to understand such flows in $\R^n$ for $n>2$.
In this article, however, we pose these problems on a compact, simply-connected 2-manifold. The motivation is
to see how the geometry, and in particular the curvature of the underlying surface may impact on the motion laws
and stability of collections of vortices in these dissipative and dispersive settings.

To this end, we let $\mathcal M$ be a smooth, simply-connected compact 2-manifold without boundary equipped with a metric $g$ and then define the Ginzburg-Landau energy $E_\varepsilon$ on $\mathcal M$ for $u: {\mathcal M} \rightarrow \mathbb C$ by
\begin{equation} \label{GLEintro}
    E_\varepsilon(u) = \int_{\mathcal M} \frac{|\nabla_g u|^2}{2} + \frac{(1-|u|^2)^2}{4\varepsilon^2} dv_g.
\end{equation}
This is a simplified version of the full energy which includes magnetic effects, originally introduced by Ginzburg and Landau \cite{GL} as a model to describe
superconductivity. To $E_\e$ one can then naturally associate the Ginzburg-Landau heat flow
\begin{equation} \label{DGLintro}
    \left\{\begin{array}{ll}
                 u_t^{\varepsilon} - \bigtriangleup _{\mathcal M} u^{\varepsilon} = \frac{u^{\varepsilon}}{\varepsilon^2}(1-|u^{\varepsilon}|^2) & \mbox{in $\mathcal M \times \mathbb R_+$} \\
                 u^{\varepsilon} = u_0^{\varepsilon} & \mbox{on $\mathcal M \times $$\{ t = 0 \} $},
                \end{array} \right.
\end{equation}
which most efficiently dissipates energy, and the conservative Ginzburg-Landau-Schr\"odinger flow, or Gross-Pitaevskii dynamics
\begin{equation} \label{DGLSintro}
    \left\{\begin{array}{ll}
                 iu_t^{\varepsilon} - \bigtriangleup _{\mathcal M} u^{\varepsilon} = \frac{u^{\varepsilon}}{\varepsilon^2}(1-|u^{\varepsilon}|^2) & \mbox{in $\mathcal M \times \mathbb R_+$} \\
                 u^{\varepsilon} = u_0^{\varepsilon} & \mbox{on $\mathcal M \times $$\{ t = 0 \} $.}
                \end{array} \right.
\end{equation}
The latter arises in studies of superfluidity, Bose-Einstein condensation and nonlinear optics.
Here $\bigtriangleup _{\mathcal M}$ denotes the Laplace-Beltrami operator and
the positive parameter $\e$ corresponds in the full Ginzburg-Landau energy to the reciprocal of the so-called Ginzburg-Landau parameter. In much--though not all--of our
analysis, we will be looking in the asymptotic regime where $\e\ll 1.$ In this regime, it is by now understood (\cite{J1,JS,Sandier,SS}) that energetically reasonable sequences, bounded in energy on the order
of $\ln{(1/\e)}$, can possess at most a finite number of vortices, that is, zeros carrying non-zero degree. Hence, the analysis of these two infinite dimensional flows can be effectively carried out
by tracking the motion of a finite number of points.

In the plane, this program consists of showing that in the limit as $\e\to 0$, the role of the energy $E_\e$ in dictating the dynamics is effectively replaced by
a so-called renormalized energy $W({\bf a},{\bf d})$ dependent on the finite number of vortex locations ${\bf a}=(a_1,a_2,\ldots,a_n)$ and their associated degrees ${\bf d}=(d_1,d_2,\ldots,d_n)$.
 The pivotal role of $W$ was first revealed to great effect in the stationary planar setting with Dirichlet boundary conditions in \cite{BBH}. The asymptotic motion law for planar vortices of the heat flow
 \eqref{DGLintro}, namely
 \begin{equation} \frac{d}{dt} a_i = - \nabla_{a_i} W(\mathbf{a}, \mathbf{d})\quad\mbox{for}\;i=1,2,\ldots,n,\label{aa}\end{equation}
  was first derived in \cite{JS2} and \cite{L2} under a well-prepared initial data assumption up to the first time of vortex collisions, and then was extended more recently in the series of papers \cite{BOS}, \cite{BOS2}, \cite{BOS3}.

  The corresponding system of ODE's governing the asymptotic behavior of  Gross-Pitaevskii vortices in the plane,
 \begin{equation} \frac{d}{dt} a_i =  \nabla^{\perp}_{a_i} W(\mathbf{a}, \mathbf{d})\quad\mbox{for}\;i=1,2,\ldots,n,\label{bb}\end{equation}
 was established in \cite{CJ1,CJ,LX} and later refined in \cite{JSp}.

A primary goal of the present article is to establish the analogs of \eqref{aa} and \eqref{bb} in the manifold setting. For
this, we will appeal to a result of \cite{B}, where the author identifies the renormalized energy on a Riemannian 2-manifold.
Assuming that the manifold is simply-connected, compact and without boundary, one can apply the Uniformization Theorem to assert the existence of a conformal map
$h: {\mathcal {M}} \rightarrow \R^2 \bigcup\{\infty \}$, so that the metric $g$ is given by
\begin{equation} \label{metric}
e^{2f}(dx_1^2 + dx_2^2),
\end{equation}
for some smooth function $f$. Thus one may identify a vortex $a_i\in\mathcal M$ with a point $b_i=h(a_i)\in\R^2 \bigcup \{ \infty \}$. Writing
$\mathbf b = (b_1, b_2, ..., b_n)$ with associated degrees $\mathbf d = (d_1, d_2, ..., d_n)$, a result in \cite{B} identifies  the renormalized energy as
\begin{equation}
    W(\mathbf b, \mathbf d) := \pi \sum_{i=1}^n d_i^2f(b_i) - \pi \sum_{i\neq j} d_i d_j \ln|b_i - b_j|.\label{Wdefna}
\end{equation}

In Section 3 we derive the asymptotic motion law analogous to \eqref{aa} for the heat flow \eqref{DGLintro} on a simply-connected, compact manifold, valid up to the first time of vortex collisions,
under an assumption of well-prepared initial data. We follow the basic scheme laid out in \cite{JS2,L2}. As in the planar case, one expects the motion to be logarithmically slow (cf. \cite{JS2,L2,RS}), so one must first re-scale in time. Then the key steps are to establish an identity which allows one to localize the rate of change of energy about a vortex (Proposition \ref{P})
and to derive a PDE for the energy density itself, Proposition \ref{eflow}. In the present setting this entails a new term involving the Gaussian curvature of $\mathcal{M}$.
The main result of this section is the content of Theorem \ref{Thm2}.

In Section 4 we turn to the vortex law in the dispersive setting on a manifold and generally follow the approach of \cite{CJ1,CJ}. This involves calculating a formula
for the evolution of the Jacobian of a solution of \eqref{DGLSintro}, Proposition \ref{P2}. We then assume that the initial data $u^\varepsilon_0$ is almost energy minimizing, cf. \eqref{aem}, and after a series of results relating the $\e$-limit of the solution to the canonical harmonic map on $\mathcal{M}$, we arrive at our main result,
Theorem \ref{Thm22}, yielding the motion law
\begin{equation}
 d_i \frac{d}{dt} b_i = - \frac{1}{\pi} (\nabla_g^{\perp})_{b_i} W(\mathbf{b}, \mathbf{d})\quad\mbox{for}\;i=1,2,\ldots,n.\label{vM}
\end{equation}
The evolutions \eqref{bb} and \eqref{vM} are known as the point-vortex problem, for the plane and a manifold, respectively. Alternatively,  one can obtain \eqref{bb}
from the Euler equations in the singular limit where vorticity is concentrated at points. This Hamiltonian system has been studied extensively,
and we will not attempt to list all references here, but an excellent source is \cite{N}, where in particular one can find a description of the
state of the art with regard to \eqref{vM} on $S^2$. Regarding the connection between solutions of \eqref{DGLSintro} and \eqref{vM} on $S^2$ we also mention the recent
result \cite{GS} relating rotating solutions of Gross-Pitaevskii to relative equilibria of the point-vortex problem.

In the final two sections of this paper we return to the Ginzburg-landau heat flow and its asymptotic limit. In light of the dissipative nature of this evolution, one expects that
generically, vortices will tend to annihilate each other. This is particularly the expectation on a closed manifold since the total degree of all vortices must be zero, and
for example, there is no Dirichlet condition
as in \cite{BBH} to force the presence of vortices, nor are we considering any applied magnetic field as in \cite{C,CS}.
Vortex annihilation results in the plane for \eqref{DGLintro} were first established in \cite{BCPS} for any finite $\e$ and later the
previously mentioned investigations  \cite{BOS}, \cite{BOS2}, \cite{BOS3} carried this out on bounded planar domains in the regime $\e\ll 1$ under very mild
assumptions on the initial data.
In \cite{KC}, the first author addresses the question of whether there can exist stable vortex configurations
in the sense of second variation for the energy $E_\e$ on a closed manifold without boundary
when $\e$ is small and she also
presents an annihilation result valid for any $\e$ for the flow \eqref{DGLintro} augmented with a Dirichlet condition on a manifold with boundary.

In Section 5 we consider the special case $\mathcal{M}=S^2$ and analyze the corresponding system of ODE's
 \begin{equation*} \frac{d}{dt} a_i =  -(\nabla_g)_{a_i} W(\mathbf{a}, \mathbf{d})\quad\mbox{for}\;i=1,2,\ldots,n.\end{equation*}
We offer a natural definition of how to extend this system past the time of a collision of two or more vortices and then working
with this definition, we establish in Theorem \ref{anh} a sufficient condition for annihilation of all vortices in finite time
involving an assumption of initial clustering of all vortices. We also estimate the time it takes for annihilation.

In Section 6, we remain in the setting of $S^2$ but turn to the PDE \eqref{DGLintro} for fixed $\e$, not necessarily small. Here we derive
in Proposition \ref{wee} certain weighted energy identities. These in particular provide evidence that a similar annihilation result for the heat flow
should hold under a clustering assumption analogous to the one from Section 5. From these identities it follows immediately that any critical point
of Ginzburg-Landau on the two-sphere should satisfy moment identities suggesting a balanced placement of vortices, cf. Corollary \ref{symmetry}.

We begin with a section introducing notation and then proceed as outlined above.
\vskip.5in
\noindent
{\bf Acknowledgment.} The research of both authors was generously supported by NSF grant DMS-1101290 and a Simons Foundation Collaboration Grant.
\vskip.5in
\section{Notation}
Let $\mathcal M$ be a 2- manifold equipped with metric $g$ and let T($\mathcal M$) be the tangent bundle of $\mathcal M$.
For $X, Y \in T(\mathcal M)$, $f: {\mathcal M} \rightarrow \mathbb R$, $u: {\mathcal M} \rightarrow {\mathbb R}^2$, and  $v: {\mathcal M} \rightarrow {\mathbb R}^2$, we write $u = (u_1, u_2)$, $v = (v_1, v_2)$ and define the following notation.
\[
    dv_g = \sqrt{|\det g|} dx_1 \wedge dx_2.
\]
\[
    \nabla_g f \mbox{: the gradient of } f \mbox{ on } \mathcal M.
\]
\[
    \nabla_g^\perp f \mbox{: the skew gradient of } f \mbox{ on } \mathcal M.
\]
Note that $\nabla_g f$ and $\nabla_g^\perp f \in T({\mathcal M})$. When ${\mathcal M} = {\mathbb R}^2$, we have $\nabla_g^\perp f = {\mathbb J} \nabla_g f$, where
\[
    {\mathbb J} = \left(\begin{array}{ll}
                 \; 0 & 1 \\
               -1 & 0
                \end{array} \right).
\]
We will also write
\[
    \nabla_g u := (\nabla_g u_1, \nabla_g u_2), \quad \nabla_g^\perp u := (\nabla_g^\perp u_1, \nabla^\perp_g u_2),
\]
\[
    {\nabla_g}_X Y \mbox{: the covariant derivative of } Y \mbox{ in the direction of } X,
\]
\[
    \langle \cdot, \cdot \rangle_g \mbox{: the inner product of tangent vectors on } \mathcal M,
\]
and
\[
    \langle \mbox{Hess } f (X), Y \rangle_g := \langle {\nabla_g}_X \nabla _g f, Y \rangle_g.
\]
When the inner product involves the gradient of vector-valued functions, we define the notation $\langle \cdot, \cdot \rangle_g$ in two cases:
\[
    \langle \nabla_g u, \nabla_g f \rangle_g = \langle \nabla_g f, \nabla_g u \rangle_g := (\langle \nabla_g f, \nabla_g u_1 \rangle_g, \langle \nabla_g f, \nabla_g u_2 \rangle_g) \in {\mathbb R}^2,
\]
\[
     \langle \nabla_g u, \nabla_g v \rangle_g :=  \langle \nabla_g u_1, \nabla_g v_1 \rangle_g +  \langle \nabla_g u_2, \nabla_g v_2 \rangle_g \in \mathbb R.
\]
We will omit the subscript $g$ for ${\mathcal M} = {\mathbb R}^2$.

For $\phi \in \mathbb R$ and $\mathbf A = (A_1, A_2) \in {\mathbb R^2}$, we define
\[
    \mathbf A^\perp = (A_2, -A_1), \quad \mathbf n(\phi) = (\cos \phi, \sin \phi), \quad \mathbf t(\phi) = - \mathbf n^\perp(\phi).
\]
Then for any $x_0 \in {\mathbb R}^2$, we define $\theta(x-x_0)$ to be the angular polar coordinate of $x$ centered at $x_0$ such that when $x \neq x_0$,
\[
    \mathbf n(\theta(x-x_0)) = \frac{x-x_0}{|x-x_0|}.
\]
When $x_0 = 0$, we will simply write $\mathbf n$ for $\mathbf n (\theta(x))$ and $\mathbf t$ for $\mathbf t (\theta(x))$).

\section{Motion Law for Ginzburg-Landau Vortices on a 2-Manifold}
Let $\mathcal M$ be a smooth, simply connected compact surface without boundary, and $g$ be a metric on $\mathcal M$. We consider the initial value problem
\begin{equation} \label{DGL}
                \left\{\begin{array}{ll}
                 u_t^{\varepsilon} - \bigtriangleup _{\mathcal M} u^{\varepsilon} = \frac{u^{\varepsilon}}{\varepsilon^2}(1-|u^{\varepsilon}|^2) & \mbox{in $\mathcal M \times \mathbb R_+$} \\
                 u^{\varepsilon} = u_0^{\varepsilon} & \mbox{on $\mathcal M \times $$\{ t = 0 \} $}.
                \end{array} \right.
\end{equation}
Here for convenience we will associate $\mathbb C$ with $\mathbb R^2$ and consider $u:{\mathcal M} \times \mathbb R_+ \rightarrow \mathbb R^2$. Note that \eqref{DGL} is the heat flow of the Ginzburg-Landau energy defined by
\begin{equation} \label{energy}
    E_{\varepsilon}(u) = \int_{\mathcal M} \frac{|\nabla_g u|^2}{2} + \frac{(1-|u|^2)^2}{4 \varepsilon^2} dv_g :=  \int_{\mathcal M} e_{\varepsilon} (u) dv_g.
\end{equation}
As mentioned in the introduction, from the Uniformization Theorem, there is a conformal map $h: \mathcal M \rightarrow \mathbb R$$^2 \bigcup\{\infty \}$, so that the metric $g$ is given by
\begin{equation} \label{metric1}
e^{2f}(dx_1^2 + dx_2^2),
\end{equation}
for some smooth function $f$. Thus we may identify points in $\mathcal M$ with points in ${\mathbb R}^2 \bigcup \{ \infty \}$. For $\mathbf b = (b_1, b_2, ..., b_n)$ and $\mathbf d = (d_1, d_2, ..., d_n)$  such that $b_i \in {\mathbb R}^2$, $d_i \in {\mathbb Z} \setminus \{0\}$ for all $i$, the renormalized energy can be written as \cite{B}
\begin{equation}
    W(\mathbf b, \mathbf d) := \pi \sum_{i=1}^n d_i^2f(b_i) - \pi \sum_{i\neq j} d_i d_j \ln|b_i - b_j|.
\end{equation}
To study the dynamics of vortices, we rescale the time variable by a factor $|\ln \varepsilon|$ and set $v^{\varepsilon}(p,t) = u^{\varepsilon}(p, |\ln \varepsilon| t)$. Then $v^{\varepsilon}$ solves
\begin{equation} \label{RTDGL}
                \left\{\begin{array}{ll}
                 \frac{1}{|\ln \varepsilon|}v_t^{\varepsilon} - \bigtriangleup _{\mathcal M} v^{\varepsilon} = \frac{v^{\varepsilon}}{\varepsilon^2}(1-|v^{\varepsilon}|^2) & \mbox{in $\mathcal M \times \mathbb R_+$} \\
                 v^{\varepsilon} = u_0^{\varepsilon} & \mbox{on $\mathcal M \times $$\{ t = 0 \} $}.
                \end{array} \right.
\end{equation}
From the heat flow structure, we easily establish that for $v^\varepsilon$ solving \eqref{RTDGL}, one has
\begin{equation} \label{Ed}
    \frac{d}{dt} \int_{\mathcal M} e_{\varepsilon}(v^{\varepsilon}) dv_g \leq 0,
\end{equation}
which leads to the global existence of a solution to \eqref{RTDGL} for smooth initial data $u^\varepsilon_0$. A key tool in capturing the motion of vortices is the following:
\begin{prop} \label{P}
Let $v^\varepsilon$ be a solution to \eqref{RTDGL} and let $\eta : \mathcal{M} \rightarrow \mathbb{R}$ be any smooth function. Then we have
\begin{align} \label{Peq}
\frac{d}{dt} \int_{\mathcal M} \eta e_{\varepsilon}(v^{\varepsilon}) dv_g = & |\ln \varepsilon| \int_{\mathcal M} \left[ \langle (\mbox{{\rm Hess} } \eta)(\nabla_g  v^{\varepsilon}), \nabla_g v^{\varepsilon}\rangle_g  - \bigtriangleup_{\mathcal M} \eta e_{\varepsilon}(v^{\varepsilon}) \right] dv_g\notag \\
    &-\frac{1}{|\ln \varepsilon|} \int_{\mathcal M} \eta |v_t^{\varepsilon}|^2 dv_g.
\end{align}
\end{prop}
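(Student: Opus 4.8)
The plan is to prove \eqref{Peq} by differentiating the localized energy under the integral sign, transferring derivatives off $v^\varepsilon_t$ through integration by parts, and then invoking the rescaled equation \eqref{RTDGL} twice — once to extract the dissipation term and once to generate the Hessian and Laplacian terms. Since $u_0^\varepsilon$ is smooth and \eqref{RTDGL} admits a smooth global solution, differentiation under the integral is legitimate, and because $\mathcal M$ is compact without boundary every total divergence integrates to zero. First I would compute the pointwise time derivative of the density,
\[
\partial_t e_\varepsilon(v^\varepsilon)=\langle\nabla_g v^\varepsilon,\nabla_g v^\varepsilon_t\rangle_g-\frac{1-|v^\varepsilon|^2}{\varepsilon^2}\langle v^\varepsilon,v^\varepsilon_t\rangle,
\]
multiply by $\eta$, and integrate. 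Handling the gradient term componentwise (via the vector-valued bracket of the Notation section) and integrating by parts yields $-\int_{\mathcal M}\eta\langle\Delta_{\mathcal M}v^\varepsilon,v^\varepsilon_t\rangle\,dv_g$ plus a cross term $-\int_{\mathcal M}\sum_k\langle\nabla_g\eta,\nabla_g v^\varepsilon_k\rangle_g\,(v^\varepsilon_t)_k\,dv_g$. Substituting $\Delta_{\mathcal M}v^\varepsilon=\frac{1}{|\ln\varepsilon|}v^\varepsilon_t-\frac{v^\varepsilon}{\varepsilon^2}(1-|v^\varepsilon|^2)$ from \eqref{RTDGL} into the first integral cancels the potential-coupling term $\frac{1-|v^\varepsilon|^2}{\varepsilon^2}\langle v^\varepsilon,v^\varepsilon_t\rangle$ exactly and isolates the dissipation contribution $-\frac{1}{|\ln\varepsilon|}\int_{\mathcal M}\eta|v^\varepsilon_t|^2\,dv_g$, which already matches the last term of \eqref{Peq}. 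What remains is to show that the cross term reproduces $|\ln\varepsilon|\int_{\mathcal M}[\langle(\mathrm{Hess}\,\eta)(\nabla_g v^\varepsilon),\nabla_g v^\varepsilon\rangle_g-\Delta_{\mathcal M}\eta\,e_\varepsilon(v^\varepsilon)]\,dv_g$.

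For the cross term I would now substitute the equation in the opposite direction, $v^\varepsilon_t=|\ln\varepsilon|\big(\Delta_{\mathcal M}v^\varepsilon+\varepsilon^{-2}v^\varepsilon(1-|v^\varepsilon|^2)\big)$, which produces the prefactor $|\ln\varepsilon|$ and splits the term into a potential piece and a Laplacian piece. The potential piece is the easier one: using $\sum_k v^\varepsilon_k\nabla_g v^\varepsilon_k=\frac12\nabla_g|v^\varepsilon|^2$ and the algebraic identity $(1-|v^\varepsilon|^2)\nabla_g|v^\varepsilon|^2=-\nabla_g\frac{(1-|v^\varepsilon|^2)^2}{2}$, one integration by parts converts it into $-|\ln\varepsilon|\int_{\mathcal M}\Delta_{\mathcal M}\eta\,\frac{(1-|v^\varepsilon|^2)^2}{4\varepsilon^2}\,dv_g$, i.e.\ precisely the potential part of $-|\ln\varepsilon|\int\Delta_{\mathcal M}\eta\,e_\varepsilon(v^\varepsilon)$.

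The Laplacian piece is the heart of the matter and the step I expect to be the main obstacle, since this is where the Hessian of $\eta$ genuinely emerges and where the geometry enters. I would establish, for each scalar component $w=v^\varepsilon_k$, the Riemannian stress-energy divergence identity
\[
\mathrm{div}_g\Big(\langle\nabla_g\eta,\nabla_g w\rangle_g\,\nabla_g w-\tfrac{1}{2}|\nabla_g w|^2\,\nabla_g\eta\Big)=\langle\nabla_g\eta,\nabla_g w\rangle_g\,\Delta_{\mathcal M}w+\langle(\mathrm{Hess}\,\eta)(\nabla_g w),\nabla_g w\rangle_g-\tfrac{1}{2}|\nabla_g w|^2\,\Delta_{\mathcal M}\eta,
\]
obtained by contracting the divergence of the stress tensor $T_{ij}=\partial_i w\,\partial_j w-\frac12|\nabla_g w|^2 g_{ij}$ against the field $\nabla_g\eta$; the crucial input is that $\nabla^i T_{ij}=(\Delta_{\mathcal M}w)\,\partial_j w$, which uses only the symmetry of the covariant Hessian of a scalar (torsion-freeness), so that, notably, no explicit curvature term appears at this stage. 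Integrating over the closed manifold kills the divergence, and solving for $\int\langle\nabla_g\eta,\nabla_g w\rangle_g\Delta_{\mathcal M}w\,dv_g$, then summing over $k$ and inserting the $-|\ln\varepsilon|$ factor, gives $|\ln\varepsilon|\int_{\mathcal M}\langle(\mathrm{Hess}\,\eta)(\nabla_g v^\varepsilon),\nabla_g v^\varepsilon\rangle_g\,dv_g$ together with the kinetic part $-|\ln\varepsilon|\int_{\mathcal M}\Delta_{\mathcal M}\eta\,\frac{|\nabla_g v^\varepsilon|^2}{2}\,dv_g$. Combining the potential and kinetic contributions into $-|\ln\varepsilon|\int\Delta_{\mathcal M}\eta\,e_\varepsilon(v^\varepsilon)$ and adding back the dissipation term from the first paragraph yields \eqref{Peq}. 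Beyond the stress-energy identity itself, the remaining care is purely bookkeeping: keeping the vector-valued inner products straight across components and tracking the two distinct substitutions of \eqref{RTDGL}.
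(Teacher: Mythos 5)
Your proof is correct and follows essentially the same route as the paper: differentiate the weighted energy in time, integrate by parts, substitute the equation \eqref{RTDGL} once to isolate the dissipation term and once to generate the $|\ln\varepsilon|$ term, and then convert $\int\langle\nabla_g\eta,\nabla_g v^\varepsilon\rangle_g\cdot\bigtriangleup_{\mathcal M}v^\varepsilon\,dv_g$ into the Hessian and Laplacian terms using only the symmetry of the covariant Hessian. Your packaging of that last step as a stress-energy divergence identity is just a reformulation of the paper's integration by parts combined with the geometric identity $\langle\nabla_g\eta,{\nabla_g}_{\nabla_g v}\nabla_g v\rangle_g=\langle\nabla_g v,{\nabla_g}_{\nabla_g\eta}\nabla_g v\rangle_g$, so no substantive difference remains.
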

\begin{proof}
 Taking the scalar product of both sides of \eqref{RTDGL} with $\eta v_t^{\varepsilon}$ and integrating we have
\begin{equation} \label{p11}
    \frac{1}{|\ln \varepsilon|} \int_{\mathcal M} \eta |v_t^{\varepsilon}|^2 dv_g - \int_{\mathcal M} \eta v_t^{\varepsilon} \cdot \bigtriangleup_{\mathcal M} v^{\varepsilon} dv_g = -\frac{d}{dt} \int_{\mathcal M} \eta \frac{V(v^{\varepsilon})}{\varepsilon^2},
\end{equation}
where $V(u) := \frac{(1-|u|^2)^2}{4}$. Integrating by parts for the second term we obtain
\begin{equation} \label{p12}
    \int_{\mathcal M} \eta v_t^{\varepsilon} \cdot \bigtriangleup_{\mathcal M} v^{\varepsilon} dv_g = -\int_{\mathcal M} v_t^{\varepsilon} \cdot \langle \nabla_g \eta, \nabla_g v^{\varepsilon} \rangle_g dv_g - \frac{d}{dt} \int_{\mathcal M} \eta \frac{|\nabla_g v^{\varepsilon}|^2}{2} dv_g.
\end{equation}
Combining \eqref{p11} and \eqref{p12} gives
\begin{equation}\label {p13}
    \frac{d}{dt} \int_{\mathcal M} \eta e_{\varepsilon}(v^{\varepsilon}) dv_g = -\int_{\mathcal M} v_t^{\varepsilon} \cdot \langle \nabla_g \eta, \nabla_g v^{\varepsilon} \rangle_g dv_g -  \frac{1}{|\ln \varepsilon|} \int_{\mathcal M} \eta |v_t^{\varepsilon}|^2 dv_g.
\end{equation}
To calculate the first term of  \eqref{p13}, we use \eqref{RTDGL} again. Then
\begin{align} \label{p14}
    \int_{\mathcal M} v_t^{\varepsilon} \cdot & \langle \nabla_g \eta, \nabla_g v^{\varepsilon} \rangle_g dv_g \notag \\
    &= \frac{1}{|\ln \varepsilon|} \int_{\mathcal M} \left[ \bigtriangleup_{\mathcal M} \cdot \langle \nabla_g \eta, \nabla_g v^{\varepsilon} \rangle_g  -  \langle \nabla_g \eta, \nabla_g \frac{V(v^{\varepsilon} )}{\varepsilon^2}\rangle_g \right] dv_g \notag \\
    &= \frac{1}{|\ln \varepsilon|} \int_{\mathcal M} \left[ \bigtriangleup_{\mathcal M} v^{\varepsilon} \cdot \langle \nabla_g \eta, \nabla_g v^{\varepsilon} \rangle_g + \frac{V(v^{\varepsilon})}{\varepsilon^2} \bigtriangleup_{\mathcal M} \eta \right] dv_g.
\end{align}
Through integration by parts and the geometric identity (\cite{P}, p. 207)
\[
    \langle \nabla_g \eta, {\nabla_g}_{\nabla_g v^\varepsilon} \nabla_g v^\varepsilon\rangle_g =  \langle \nabla_g v^\varepsilon, {\nabla_g}_{\nabla_g \eta} \nabla_g v^\varepsilon\rangle_g,
\]
we have
\begin{align} \label{p15}
    \int_{\mathcal M} \bigtriangleup_{\mathcal M} v^{\varepsilon} & \cdot \langle \nabla_g \eta, \nabla_g v^{\varepsilon} \rangle_g dv_g = -\int_{\mathcal M} \langle \nabla_g  v^{\varepsilon}, \nabla_g \langle \nabla_g \eta, \nabla_g  v^{\varepsilon} \rangle_g \rangle_g \notag \\
    &= -\int_{\mathcal M} \left[ \frac{1}{2} \langle \nabla_g |\nabla_g v^{\varepsilon}|^2, \nabla_g \eta \rangle_g + \langle (\mbox{Hess } \eta)(\nabla_g v^{\varepsilon}), \nabla_g v^{\varepsilon} \rangle_g \right] dv_g \notag \\
    &= \int_{\mathcal M} \left[ \frac{|\nabla_g v^{\varepsilon}|^2}{2} \bigtriangleup_{\mathcal M} \eta - \langle (\mbox{Hess } \eta)(\nabla_g v^{\varepsilon}), \nabla_g v^{\varepsilon} \rangle_g \right] dv_g
\end{align}
Therefore by \eqref{p13}, \eqref{p14}, and \eqref{p15} we derive \eqref{Peq}.
\end{proof}

At this point we assume that the initial data is well-prepared in the following sense.
We assume there are precisely $n$ zeros $\{\alpha^{\varepsilon}_1, \alpha^{\varepsilon}_2, ..., \alpha^{\varepsilon}_n\} \subset \mathcal M$ of $u^\varepsilon_0$ such that
\begin{equation} \label{c1}
    R := \frac{1}{3} \min_{0<\varepsilon \leq 1} \min_{i \neq j} dist(\alpha^{\varepsilon}_i , \alpha^{\varepsilon}_j)> 0,
\end{equation}
where $dist(p,q)$ refers to geodesic distance between $p$ and $q$ on $\mathcal M$ and
\begin{equation} \label{c2}
    d_i := \mbox{deg}(u^{\varepsilon}_0, \partial B_R^g(\alpha^{\varepsilon}_i)) \in \{ 1, -1 \} \mbox{ for } 1 \leq i \leq n, \quad  \sum_{i=1}^n d_i = 0.
\end{equation}
We further assume that for some positive constant $c$ independent of $\e$ we have the bounds
\begin{equation} \label{c3}
    \int_{\mathcal M} e_{\varepsilon} (u^{\varepsilon}_0) dv_g \leq n \pi |\ln \varepsilon| + c,
\end{equation}
\begin{equation}
    \sup_{\varepsilon \in (0,1]} \{ e_\varepsilon(u^{\varepsilon}_0(p)) : dist(p,\alpha^\varepsilon_i) \geq \frac{R}{2} \mbox{ for all } i=1, 2, ...,n\} \leq c,
\end{equation}
\begin{equation}
    \inf_{\varepsilon \in (0,1]} \{ |u^{\varepsilon}_0(p)| : dist(p,\alpha^\varepsilon_i) \geq \frac{R}{2} \mbox{ for all } i=1, 2, ...,n\} \geq \frac{3}{4},
\end{equation}
\begin{equation}
    |u^{\varepsilon}_0| \leq 1, \quad \varepsilon|\nabla_g u^{\varepsilon}_0| + \varepsilon^2|\mbox{Hess } u^{\varepsilon}_0| \leq c.
\end{equation}
We also assume that there exist points $\{\alpha_i\} \subset {\mathcal M}$ such that
\begin{equation} \label{c4}
    \lim_{\varepsilon \rightarrow 0} \alpha^{\varepsilon}_i = \alpha_i \; \mbox{ for } 1 \leq i \leq n,
\end{equation}
and
\begin{equation}
    \frac{e_\varepsilon(u^\varepsilon_0)}{|\ln \varepsilon|} \rightharpoonup \pi \sum_{i=1}^n \delta_{\alpha_i} \mbox{ in the sense of distributions.}
\end{equation}
The last condition in fact comes from the stationary results in \cite{BBH} (Theorem VII.2, Theorem VII.3). Next, to obtain the regularity result, we show that the energy density $e_\varepsilon$ defined in \eqref{energy} satisfies a certain PDE.

\begin{prop}\label{eflow}
Let $u^\varepsilon$ be a solution to \eqref{DGL}. Then the energy density $e_\varepsilon(u^\varepsilon)$ satisfies
\begin{align}
    [e_\varepsilon(u^\varepsilon)]_t - \bigtriangleup_{\mathcal M} [e_\varepsilon(u^\varepsilon)] = & \frac{2}{\varepsilon^2}(1-|u^\varepsilon|^2)|\nabla_g  u^\varepsilon|^2 - |\mbox{{\rm{Hess}} } u^\varepsilon|^2 - \frac{4}{\varepsilon^2} |u^\varepsilon \cdot \nabla_g u^\varepsilon|^2 \notag \\
    & - \frac{1}{\varepsilon ^4}(1-|u^\varepsilon|^2)^2|u^\varepsilon|^2 - |\nabla_g u^\varepsilon|^2 K_{\mathcal M},
\end{align}
where $K_{\mathcal M}$ is the Gaussian curvature of $\mathcal M$.
\end{prop}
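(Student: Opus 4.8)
The plan is to split the density $e_\varepsilon(u)=\frac{|\nabla_g u|^2}{2}+\frac{V(u)}{\varepsilon^2}$, with $V(u):=\frac{(1-|u|^2)^2}{4}$, into its gradient and potential pieces (I suppress the superscript $\varepsilon$), to compute $[\,\cdot\,]_t-\bigtriangleup_{\mathcal M}[\,\cdot\,]$ for each piece, and then to use the evolution equation \eqref{DGL} to eliminate every occurrence of $u_t$ and $\bigtriangleup_{\mathcal M}u$. Differentiating in time is immediate: since $\nabla_u V=-u(1-|u|^2)$, one has
\begin{equation*}
[e_\varepsilon]_t=\langle\nabla_g u,\nabla_g u_t\rangle_g-\frac{1}{\varepsilon^2}(1-|u|^2)\,u\cdot u_t.
\end{equation*}

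For the spatial Laplacian the essential input is the Bochner--Weitzenb\"ock formula, applied to each scalar component of $u$ and summed:
\begin{equation*}
\tfrac{1}{2}\bigtriangleup_{\mathcal M}|\nabla_g u|^2=|\mbox{Hess }u|^2+\langle\nabla_g u,\nabla_g\bigtriangleup_{\mathcal M}u\rangle_g+\mbox{Ric}(\nabla_g u,\nabla_g u).
\end{equation*}
This is where the geometry enters: on a $2$-manifold the Ricci tensor satisfies $\mbox{Ric}=K_{\mathcal M}\,g$, so the last term collapses to $K_{\mathcal M}|\nabla_g u|^2$, producing exactly the curvature contribution in the statement. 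For the potential piece I would write $\bigtriangleup_{\mathcal M}[V(u)/\varepsilon^2]=\frac{1}{\varepsilon^2}\mbox{div}_g(\nabla_g V(u))$ and expand by the chain rule, using $\nabla_g V(u)=-(1-|u|^2)(u\cdot\nabla_g u)$ together with $\mbox{div}_g(u\cdot\nabla_g u)=|\nabla_g u|^2+u\cdot\bigtriangleup_{\mathcal M}u$; this yields the three terms $\frac{2}{\varepsilon^2}|u\cdot\nabla_g u|^2-\frac{1}{\varepsilon^2}(1-|u|^2)|\nabla_g u|^2-\frac{1}{\varepsilon^2}(1-|u|^2)(u\cdot\bigtriangleup_{\mathcal M}u)$.

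Finally I would assemble $[e_\varepsilon]_t-\bigtriangleup_{\mathcal M}[e_\varepsilon]$ and substitute \eqref{DGL} in two places. The difference $\langle\nabla_g u,\nabla_g u_t-\nabla_g\bigtriangleup_{\mathcal M}u\rangle_g$ becomes $\frac{1}{\varepsilon^2}\langle\nabla_g u,\nabla_g[u(1-|u|^2)]\rangle_g$, which expands to $\frac{1}{\varepsilon^2}[(1-|u|^2)|\nabla_g u|^2-2|u\cdot\nabla_g u|^2]$, while the remaining zeroth-order contributions combine as $\frac{1}{\varepsilon^2}(1-|u|^2)\,u\cdot(\bigtriangleup_{\mathcal M}u-u_t)=-\frac{1}{\varepsilon^4}(1-|u|^2)^2|u|^2$. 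Collecting the $(1-|u|^2)|\nabla_g u|^2$ and $|u\cdot\nabla_g u|^2$ terms (each fed by a matching pair) then reproduces the coefficients $\frac{2}{\varepsilon^2}$ and $-\frac{4}{\varepsilon^2}$ of the claim, and the $|\mbox{Hess }u|^2$ and curvature terms survive with the stated signs. The conceptual key is the Bochner--Weitzenb\"ock identity, which is what injects $K_{\mathcal M}$; the main practical obstacle is the bookkeeping, namely handling the vector-valued nature of $u$ correctly (summing the Hessian and Ricci contributions over the two components and tracking the tangent field $u\cdot\nabla_g u$) and ensuring that every $\bigtriangleup_{\mathcal M}u$ and $u_t$ is removed via \eqref{DGL}, so that only first- and zeroth-order quantities remain.
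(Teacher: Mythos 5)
Your proposal is correct and follows essentially the same route as the paper: differentiate $e_\varepsilon$ in time, expand $\bigtriangleup_{\mathcal M}e_\varepsilon$ via the chain rule on the potential term and the Bochner--Weitzenb\"ock formula on $\tfrac12|\nabla_g u|^2$ (summed over components, with $\mathrm{Ric}=K_{\mathcal M}\,g$ in dimension two supplying the curvature term), and then eliminate $u_t$ and $\bigtriangleup_{\mathcal M}u$ using \eqref{DGL}. All intermediate identities and the final bookkeeping of coefficients match the paper's computation.
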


\begin{proof}
From the definition
\[
    e_\varepsilon(u^\varepsilon) := \frac{|\nabla_g u^\varepsilon|^2}{2} + \frac{(1-|u^\varepsilon|^2)^2}{4\varepsilon^2},
\]
we have
\begin{equation} \label{p12-1}
     [e_\varepsilon(u^\varepsilon)]_t = \langle \nabla_g u^\varepsilon, \nabla_g u^\varepsilon_t  \rangle_g - \frac{1-|u^\varepsilon|^2}{\varepsilon^2} u^\varepsilon \cdot u^\varepsilon_t,
\end{equation}
and
\begin{align} \label{p12-2}
    \bigtriangleup_{\mathcal M} [e_\varepsilon(u^\varepsilon)] = & \bigtriangleup_{\mathcal M} \frac{|\nabla_g u^\varepsilon|^2}{2} - \frac{1-|u^\varepsilon|^2}{\varepsilon^2}u^\varepsilon \cdot \bigtriangleup_{\mathcal M} u^\varepsilon \notag \\
    & - \frac{1-|u^\varepsilon|^2}{\varepsilon^2} |\nabla_g u^\varepsilon|^2 + \frac{2}{\varepsilon^2} |u^\varepsilon \cdot \nabla_g u^\varepsilon|^2.
\end{align}
Substituting \eqref{DGL} into \eqref{p12-1} we derive
\begin{align}\label{p12-3}
    [e_\varepsilon(u^\varepsilon)]_t = & \langle \nabla_g u^\varepsilon, \nabla_g \left[\bigtriangleup_{\mathcal M} u^\varepsilon + \frac{u^\varepsilon}{\varepsilon^2}(1-|u^\varepsilon|^2)\right]  \rangle_g - \frac{1-|u^\varepsilon|^2}{\varepsilon^2} u^\varepsilon \cdot \bigtriangleup_{\mathcal M} u^\varepsilon \notag \\
    & - \frac{1}{\varepsilon^4}(1-|u^\varepsilon|^2)^2|u^\varepsilon|^2 \notag \\
    = & \langle \nabla_g u^\varepsilon, \nabla_g \bigtriangleup_{\mathcal M} u^\varepsilon \rangle_g + \frac{1-|u^\varepsilon|^2}{\varepsilon^2} |\nabla_g u^\varepsilon|^2 - \frac{2}{\varepsilon^2} |u^\varepsilon \cdot \nabla_g u^\varepsilon|^2 \notag \\
    & - \frac{1-|u^\varepsilon|^2}{\varepsilon^2} u^\varepsilon \cdot \bigtriangleup_{\mathcal M} u^\varepsilon - \frac{1}{\varepsilon^4}(1-|u^\varepsilon|^2)^2|u^\varepsilon|^2.
\end{align}
Using the Bochner-Weitzenb\"{o}ck  formula (see \cite{P}, Chapter 7, Proposition 33):
\[
    \bigtriangleup_{\mathcal M} \frac{|\nabla_g u^\varepsilon|^2}{2} = |\mbox{Hess } u^\varepsilon|^2 + \langle \nabla_g u^\varepsilon, \nabla_g \bigtriangleup_{\mathcal M} u^\varepsilon \rangle_g + |\nabla_g u^\varepsilon|^2 K_{\mathcal M},
\]
and combining \eqref{p12-2} and \eqref{p12-3}, we have the desired identity.
\end{proof}

Since $\mathcal M$ is smooth and compact, $K_{\mathcal M}$ is bounded. Hence there exists $\varepsilon_{\mathcal M}>0$ such that the proof of the main regularity result from \cite{JS2} can be readily adapted to the setting on a manifold if we set $0< \varepsilon < \varepsilon_{\mathcal M}$:

\begin{lem}[Regularity, cf. \cite{JS2}, Theorem 6.1] \label{reg}
    Let $B^g_r(q)$ be a geodesic ball in $\mathcal M$ with center at $q \in \mathcal M$ and radius $r$. Let $0 < \varepsilon < \min\{1, r, \varepsilon_{\mathcal M}\}$. Suppose that $u^\varepsilon$ is a solution to \eqref{DGL} in $B^g_{2r}(q) \times (0,4r^2)$ such that
\[
    \sup\{ \int_{B^g_{2r}(q)} e_\varepsilon (u^\varepsilon(\cdot, t)) dv_g: t \in [0,4r^2] \} \leq k_1.
\]
Then there is a constant $C=C(k_1, \mathcal M)$ such that
\[
     e_\varepsilon (u^\varepsilon(p,t)) \leq \frac{C}{r^2} \quad \mbox{for } (p,t) \in B^g_{r}(q) \times [r^2,4r^2].
\]
If in addition
\[
     e_\varepsilon (u^\varepsilon(p,0)) \leq k_1 \quad \mbox{for } p \in B^g_{2r}(q),
\]
then
\[
    e_\varepsilon (u^\varepsilon(p,t)) \leq \frac{C}{r^2} \quad \mbox{for } (p,t) \in B^g_{r}(q) \times [0,4r^2].
\]
\end{lem}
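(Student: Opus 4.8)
The plan is to adapt the parabolic $\varepsilon$-regularity scheme of \cite{JS2} (Theorem 6.1) to the Riemannian setting. The conceptual point is that the asserted estimate is a heat-kernel smoothing bound: a bound on the \emph{spatially integrated} energy by a constant $k_1$ that is $O(1)$ rather than $O(|\ln\varepsilon|)$ forces the ball to be free of vortices, so the energy density is controlled at the $2$-dimensional parabolic scaling $C/r^2$. The key analytic input is the energy-density equation of Proposition \ref{eflow}, and the central task is to convert it into a differential inequality for which $e_\varepsilon(u^\varepsilon)$ is essentially a subsolution of the heat equation on $\mathcal M$.

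First I would record that, by the maximum principle, $|u^\varepsilon|\le 1$ throughout: the quantity $w:=|u^\varepsilon|^2-1$ obeys $w_t-\Delta_{\mathcal M}w+\tfrac{2|u^\varepsilon|^2}{\varepsilon^2}w=-2|\nabla_g u^\varepsilon|^2\le 0$. This fixes the signs in Proposition \ref{eflow}, making the terms $-|\mathrm{Hess}\,u^\varepsilon|^2$, $-\tfrac{4}{\varepsilon^2}|u^\varepsilon\cdot\nabla_g u^\varepsilon|^2$ and $-\tfrac{1}{\varepsilon^4}(1-|u^\varepsilon|^2)^2|u^\varepsilon|^2$ nonpositive while $1-|u^\varepsilon|^2\ge 0$. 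The only terms on the right that can be positive are the gradient term $\tfrac{2}{\varepsilon^2}(1-|u^\varepsilon|^2)|\nabla_g u^\varepsilon|^2$ and the curvature term $-|\nabla_g u^\varepsilon|^2K_{\mathcal M}$. Since $\mathcal M$ is compact there is $\Lambda$ with $|K_{\mathcal M}|\le\Lambda$, so $-|\nabla_g u^\varepsilon|^2K_{\mathcal M}\le 2\Lambda\,e_\varepsilon(u^\varepsilon)$; this is a bounded-coefficient linear term that can be absorbed by the exponential weight $e^{-2\Lambda t}e_\varepsilon$ or into the constants, and it is precisely this term that forces $C=C(k_1,\mathcal M)$ and the threshold $\varepsilon<\varepsilon_{\mathcal M}$. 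For the positive gradient term I would reproduce the pointwise estimates of \cite{JS2}, combining it by Young's inequality with the available good terms (absorbing the resulting $\tfrac{1}{\varepsilon^4}(1-|u^\varepsilon|^2)^2|u^\varepsilon|^2$-type contribution), to reach an inequality of the shape $[e_\varepsilon]_t-\Delta_{\mathcal M}e_\varepsilon\le C\,e_\varepsilon^2+C\,e_\varepsilon$.

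With this inequality I would run the local parabolic iteration of \cite{JS2}: a Struwe-type monotonicity together with a small-energy argument shows that once the scaled local energy is controlled, $e_\varepsilon$ cannot concentrate, and the mean-value inequality for subsolutions on the cylinder $B^g_{2r}(q)\times(0,4r^2)$ yields
\[
\sup_{B^g_r(q)\times[r^2,4r^2]} e_\varepsilon \;\le\; \frac{C}{r^4}\int_0^{4r^2}\!\!\int_{B^g_{2r}(q)} e_\varepsilon\,dv_g\,dt \;\le\; \frac{C}{r^4}\cdot 4r^2 k_1,
\]
which is the claimed bound $C/r^2$. The delay $[r^2,4r^2]$ is the usual smoothing lag; under the extra hypothesis $e_\varepsilon(u^\varepsilon(\cdot,0))\le k_1$ the initial layer is already controlled, so the estimate extends to $t=0$. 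The geometric adaptations — comparing $\Delta_{\mathcal M}$ with the Euclidean Laplacian in normal (or conformal) coordinates on a geodesic ball, volume doubling, and the local mean-value inequality — all hold with constants depending only on $\mathcal M$ because $\mathcal M$ is smooth and compact, i.e. has bounded geometry.

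The main obstacle I expect is the second step: verifying that $e_\varepsilon$ is genuinely a subsolution despite the positive gradient term $\tfrac{2}{\varepsilon^2}(1-|u^\varepsilon|^2)|\nabla_g u^\varepsilon|^2$. This is the technical heart inherited from \cite{JS2}, and it is the reason the integral-to-pointwise passage — whose right-hand side is quadratic in $e_\varepsilon$ — requires the monotonicity and small-energy bootstrap rather than a bare linear maximum principle. Relative to \cite{JS2} the genuinely new points are comparatively mild: the bounded curvature term and the replacement of Euclidean balls by geodesic balls. This is why the adaptation is \emph{ready}, provided $\varepsilon<\varepsilon_{\mathcal M}$ so that the curvature contribution remains subordinate to the dissipative terms.
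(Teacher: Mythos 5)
Your proposal matches the paper's approach: the paper offers no independent proof of Lemma \ref{reg}, but justifies it exactly as you do, by deriving the energy-density equation of Proposition \ref{eflow} (whose only new feature relative to the planar case is the bounded term $-|\nabla_g u^\varepsilon|^2 K_{\mathcal M}$) and then asserting that the proof of Theorem 6.1 of \cite{JS2} adapts for $0<\varepsilon<\varepsilon_{\mathcal M}$. Your sketch simply fills in more of the details of that adaptation (subsolution inequality, mean-value estimate, initial-layer remark), all consistent with the paper's intent.
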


With Lemma \ref{reg} we may extend the following result stated in \cite{JS} to our setting on a manifold:
\begin{thm}[cf. \cite{JS}, Lemma 5.1, Proposition 5.5] \label{Thm1}
    Suppose $v^\varepsilon$ is a solution to \eqref{RTDGL} in ${\mathcal M} \times [0,T_0]$. Then there exists a subsequence $\{\varepsilon_n\}$ and a finite set of points $\{a_i(t)\}_{i=1}^n \subset \mathcal M$ such that $a_i(0) = \alpha_i,$ and for $R$ defined in \eqref{c1}, we have

    \begin{equation} \label{lowerbd}
        \int_{B^g_R(a_i)} e_{\varepsilon} (v^{\varepsilon}(\cdot, t)) dv_g \geq \pi \ln \frac{R}{\varepsilon} - c \quad \mbox{for all } 1 \leq i \leq n, \; \varepsilon \in (0,1).
    \end{equation}
Moreover, as $\varepsilon_n \rightarrow 0$,
    \[
        \frac{e_{\varepsilon_n}(v^{\varepsilon_n})}{|\ln \varepsilon_n|} \rightharpoonup \pi \sum_{i=1}^n \delta_{a_i}
    \]
    in the sense of distributions for $t \in [0, T_1)$, where
    \[
        T_1 := \inf \{ t>0: \min_{i \neq j} \{\mbox{dist}(a_i(t), a_j(t))\}=0 \}.
    \]
    Let $b_i: [0, T_1] \rightarrow {\mathbb R}^2$ be defined as $b_i(t) = h(a_i(t))$, and let $\Theta = \sum_1^n d_i \theta_i$, where $\theta_i(t) = \theta (x - b_i(t))$. Then, after
    perhaps an adjustment by a constant rotational factor $e^{i\theta_0}$, we have
    \begin{equation} \label{thm1eq1}
        v^{\varepsilon_n} \rightarrow v^* = \mathbf{n} (\Theta)
    \end{equation}
    uniformly on any compact subset of $\Omega := \{ (x,t) \in \mathbb{R}^2 \times [0,T_1): x \neq b_i(t) \mbox{ for all } i \}$.
    Furthermore,
    \begin{equation} \label{thm1eq2}
        |\nabla v^{\varepsilon_n}|^2 \mbox{ and } 2e_{\varepsilon_n} (v^{\varepsilon_n}) \rightarrow |\nabla v^*|^2 \quad \mbox{in } L^1_{loc}(\Omega).
    \end{equation}
\end{thm}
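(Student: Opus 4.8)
The plan is to transplant the planar compactness and convergence arguments of \cite{JS} to the manifold via the conformal chart $h$, exploiting the fact that in two dimensions the Dirichlet part of $E_\varepsilon$ is conformally invariant, while controlling the two genuinely new features: the conformal weight $e^{2f}$ in the potential term and the curvature term in Proposition \ref{eflow}. Working in the coordinates of \eqref{metric1}, I would first combine the regularity estimate Lemma \ref{reg} with the energy bound \eqref{c3} and the monotonicity \eqref{Ed} to carry out a clearing-out argument of the type in \cite{JS}: at each time $t$, the set of points around which the energy on a small ball exceeds the cost of a single vortex is confined to finitely many balls, and away from these balls Lemma \ref{reg} forces $e_\varepsilon(v^\varepsilon)=O(1)$ and hence $|v^\varepsilon|$ bounded away from $0$. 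As $\varepsilon_n\to0$ these balls shrink to the vortex set $\{a_i(t)\}$, with $a_i(0)=\alpha_i$ pinned down by the well-prepared hypotheses \eqref{c1}--\eqref{c4}.

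Next I would establish the lower bound \eqref{lowerbd}. Since each $d_i=\pm1$ and the Dirichlet energy is conformally invariant, the classical degree-one vortex lower bound $\pi\ln(R/\varepsilon)-c$ of \cite{JS} transfers in the flat coordinates of the chart; the factor $e^{2f}$ is bounded above and below on the relevant ball and only perturbs the potential term by a bounded amount. Summing these $n$ disjoint lower bounds and comparing with the upper bound \eqref{c3}, which propagates in time by \eqref{Ed}, saturates the energy budget and forces all of the energy to concentrate at the $a_i(t)$ with no excess, yielding $e_{\varepsilon_n}(v^{\varepsilon_n})/|\ln\varepsilon_n|\rightharpoonup \pi\sum_i\delta_{a_i}$ for $t\in[0,T_1)$. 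To see that the trajectories $a_i(t)$ are continuous and that no vortices are lost or created before $T_1$, I would test Proposition \ref{P} against cutoffs $\eta$ localized near each vortex: after division by $|\ln\varepsilon|$ the right-hand side is $O(1)$, so the localized energy cannot change abruptly, which bounds the displacement of each vortex on short time intervals and rules out sudden splitting or annihilation as long as the $a_i$ stay separated.

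On any compact $K\subset\Omega$ the energy density is uniformly bounded, so $|v^{\varepsilon_n}|\to1$ and, writing $v^{\varepsilon_n}=|v^{\varepsilon_n}|e^{i\varphi_n}$ locally, the phases converge to a limiting $S^1$-valued map $v^*$ that is harmonic into $S^1$. Because a harmonic phase in two dimensions is insensitive to the conformal factor, in the chart $v^*$ is the canonical harmonic map with singularities $b_i(t)=h(a_i(t))$ of degree $d_i$; on the simply connected chart this is exactly $\mathbf n(\Theta)$ with $\Theta=\sum_i d_i\theta_i$, the indeterminacy being a single additive constant fixed by the rotation $e^{i\theta_0}$. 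Standard parabolic estimates upgrade the convergence to be uniform on $K$. Finally, the $L^1_{loc}(\Omega)$ convergence of $|\nabla v^{\varepsilon_n}|^2$ and of $2e_{\varepsilon_n}(v^{\varepsilon_n})$ to $|\nabla v^*|^2$ follows because the potential term tends to zero in $L^1_{loc}(\Omega)$ once $|v^{\varepsilon_n}|\to1$ with controlled gradient, and the sharp energy bound leaves no room for a concentration defect away from the $a_i$.

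I expect the main obstacle to be the simultaneous control of the vortex trajectories and the energy concentration: one must show that the $n$ vortices present initially persist as exactly $n$ separated points up to $T_1$, with continuous paths, rather than splitting, merging, or spawning spurious zeros. This is where the sharpness of \eqref{lowerbd} and the flux estimate from Proposition \ref{P} must be combined most carefully, and where the curvature term in Proposition \ref{eflow} and the conformal weight enter; since $K_{\mathcal M}$ is bounded and $e^{2f}$ is bounded away from $0$ and $\infty$ on the chart, however, these contribute only lower-order corrections that are absorbed into the constants, so the planar argument of \cite{JS} survives the transplant.
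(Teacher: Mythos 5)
Your proposal is correct and follows essentially the same route as the paper: the authors simply observe that the argument of Jerrard--Soner is independent of the geometry once the regularity result (Lemma \ref{reg}, which absorbs the curvature term for $\varepsilon<\varepsilon_{\mathcal M}$) is in hand, and that matters are in fact easier on a closed manifold since there is no Dirichlet boundary condition to track. Your sketch fills in the details of that transplantation (conformal invariance of the Dirichlet part, boundedness of $e^{2f}$ and $K_{\mathcal M}$, saturation of the energy budget via \eqref{c3} and \eqref{lowerbd}) that the paper leaves implicit.
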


Since the method of proving Theorem \ref{Thm1} used in \cite{JS} is independent of the geometry, we can apply the same proof here with only minor changes. Indeed, for a compact manifold $\mathcal M$ without boundary, the proof of the convergence is even easier since we do not have to deal with the Dirichlet boundary condition. This means, in particular, we do not have to adjust the phase of the limiting map $v^*$ for specified boundary behavior. We now come to the main result.

\begin{thm} \label{Thm2}
    The functions $\{b_i(t)\}$ given in Theorem \ref{Thm1} are differentiable, and they satisfy the system of ODE's:
    \begin{equation} \label{thmode}
        \left\{\begin{array}{ll}
        \frac{d}{dt} b_i = - \frac{1}{\pi} (\nabla_g)_{b_i} W(\mathbf{b}, \mathbf{d}) & \mbox{for } t \in (0,T_1) \\
        b_i(0) = \beta_i & \mbox{for } i=1,2, ..., n,
        \end{array} \right.
    \end{equation}
where $\beta_i = h(\alpha_i)$, $\mathbf b = (b_1, b_2, ..., b_n)$, and $\mathbf d = (d_1, d_2, ..., d_n)$ such that each $d_i$ is given by \eqref{c2}.
\end{thm}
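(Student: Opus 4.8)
The plan is to read off the motion law from the leading-order balance, in powers of $|\ln\varepsilon|$, of the localized energy identity of Proposition \ref{P} against the energy concentration of Theorem \ref{Thm1}. Write $L=|\ln\varepsilon|$ and, for a fixed smooth $\eta$ on $\mathcal M$, rewrite Proposition \ref{P} as
\[
\frac{1}{L}\frac{d}{dt}\int_{\mathcal M}\eta\, e_\varepsilon(v^\varepsilon)\,dv_g = B_\varepsilon[\eta]-\frac{1}{L^2}\int_{\mathcal M}\eta\,|v^\varepsilon_t|^2\,dv_g,
\]
\[
B_\varepsilon[\eta]:=\int_{\mathcal M}\Big[\langle(\mathrm{Hess}\,\eta)(\nabla_g v^\varepsilon),\nabla_g v^\varepsilon\rangle_g-\Delta_{\mathcal M}\eta\; e_\varepsilon(v^\varepsilon)\Big]\,dv_g.
\]
The first point is that the divergent self-energy cancels inside $B_\varepsilon[\eta]$: using $v^\varepsilon\to v^*=\mathbf n(\Theta)$ and the isotropy of the angular average of $\nabla\theta_i\otimes\nabla\theta_i$, the two $O(L)$ contributions of the integrals of $\langle(\mathrm{Hess}\,\eta)(\nabla_g v^\varepsilon),\nabla_g v^\varepsilon\rangle_g$ and of $\Delta_{\mathcal M}\eta\,e_\varepsilon$ coincide, so $B_\varepsilon[\eta]=O(1)$. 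Testing the identity against $\chi\in C_c^\infty((0,T_1))$ in time, the dissipative term is controlled by $\tfrac{1}{L^2}\iint\chi\,\eta|v^\varepsilon_t|^2\le \tfrac{C}{L}\to0$, directly from the dissipation identity \eqref{Ed} and the energy bound \eqref{c3}; the left side converges, via the concentration $e_\varepsilon/L\rightharpoonup\pi\sum_i\delta_{a_i}$, to $-\pi\int\chi'\sum_i\eta(a_i(t))$. Thus, in the sense of distributions in $t$,
\[
\pi\,\frac{d}{dt}\sum_i\eta(a_i(t))=\lim_{\varepsilon\to0}B_\varepsilon[\eta].
\]

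The second step is to evaluate $\lim_\varepsilon B_\varepsilon[\eta]$ by the stress–energy method. In two dimensions harmonicity is conformally invariant, so $v^*$ is a harmonic map off the vortices and its stress–energy tensor $T^g$ is divergence-free there. Excising small geodesic disks $B^g_\rho(a_i)$, the $L^1_{\rm loc}$ convergence $|\nabla v^\varepsilon|^2,\,2e_\varepsilon(v^\varepsilon)\to|\nabla v^*|^2$ of \eqref{thm1eq2} lets me pass to the limit on $\mathcal M\setminus\bigcup_iB^g_\rho(a_i)$, where the integrand of $B_\varepsilon[\eta]$ becomes $\langle\mathrm{Hess}_g\eta,T^g\rangle_g$; integrating by parts and using $\mathrm{div}_g T^g=0$ reduces the limit to boundary integrals,
\[
\lim_{\varepsilon\to0}B_\varepsilon[\eta]=-\lim_{\rho\to0}\sum_i\int_{\partial B^g_\rho(a_i)}\big\langle\nabla_g\eta,\,T^g\nu\big\rangle_g\,ds_g .
\]
Passing to the conformal chart $b_i=h(a_i)$ and splitting $\nabla\Theta=d_i\nabla\theta_i+\nabla\Phi_i$ near $b_i$ with $\Phi_i=\sum_{j\neq i}d_j\theta_j$ smooth, the singular–singular part integrates to zero by symmetry and the regular–regular part vanishes with $\rho$; the surviving cross term yields the interaction force $-2\pi\sum_{j\neq i}d_id_j\,(b_i-b_j)/|b_i-b_j|^2$, while the conformal factor $e^{2f}$ in the metric breaks the symmetry of the core contribution and produces the self-force $\pi d_i^2\nabla f(b_i)$. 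Comparing with $\nabla_{b_i}W=\pi d_i^2\nabla f(b_i)-2\pi\sum_{j\neq i}d_id_j\,(b_i-b_j)/|b_i-b_j|^2$ identifies $\lim_\varepsilon B_\varepsilon[\eta]$ with $-\sum_i\langle(\nabla_g)_{b_i}W,\nabla\eta(b_i)\rangle$.

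Combining the two steps gives, for every smooth $\eta$,
\[
\frac{d}{dt}\sum_i\eta(b_i(t))=-\frac{1}{\pi}\sum_i\big\langle(\nabla_g)_{b_i}W(\mathbf b,\mathbf d),\,\nabla\eta(b_i)\big\rangle .
\]
Since the right-hand side is continuous in $\mathbf b$, hence in $t$ (the $a_i$, and so the $b_i$, being continuous by Theorem \ref{Thm1}), each $\sum_i\eta(b_i(t))$ is $C^1$; localizing $\eta$ about a single $b_i$ and choosing $\nabla\eta(b_i)$ along each coordinate direction shows every $b_i$ is $C^1$ and satisfies \eqref{thmode}, after which the smoothness of $W$ bootstraps $b_i$ to be smooth on $(0,T_1)$, with initial value $b_i(0)=h(\alpha_i)=\beta_i$ supplied by Theorem \ref{Thm1}.

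I expect the main obstacle to be the uniform-in-$\varepsilon$ control of the core contribution to $B_\varepsilon[\eta]$ on $\bigcup_iB^g_\rho(a_i)$, namely showing that it tends to $0$ as $\rho\to0$ uniformly in $\varepsilon$, so that the excision-and-integration-by-parts step is rigorous. This rests on the sharp lower bound \eqref{lowerbd} together with \eqref{c3} to bound the energy excess in each annulus, and on the regularity estimate of Lemma \ref{reg} to keep $e_\varepsilon(v^\varepsilon)$ pointwise controlled away from the cores. The genuinely new manifold feature is that the metric Hessian and Laplacian of $\eta$, with their Christoffel corrections $\propto\nabla f$, and the weight $e^{2f}$ in the potential, must be expanded carefully so that the emergent $\pi d_i^2\nabla f(b_i)$ precisely matches the $f$-dependence of $W$; tracking this conformal factor is the delicate bookkeeping that distinguishes the present computation from its planar counterpart.
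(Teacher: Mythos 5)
Your overall strategy coincides with the paper's: both rest on Proposition \ref{P}, use the concentration and convergence statements of Theorem \ref{Thm1} to read off $\frac{d}{dt}\eta(b_1)$, kill the dissipative term via \eqref{c3} and \eqref{lowerbd}, and then evaluate the bulk term by the same conformal-factor bookkeeping \eqref{Qs}--\eqref{Q3}, integration by parts using harmonicity of $v^*$, and an angular-average computation on $\partial B_\rho$ that produces $-\pi e^{-2f}\langle \nabla f - 2d_1\mathbf C^\perp,\mathbf A\rangle$, i.e.\ $-\frac1\pi(\nabla_g)_{b_1}W$. The one substantive divergence is your choice of test function, and it is where your argument has a genuine gap. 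You work with a general smooth $\eta$ and assert that $B_\varepsilon[\eta]=O(1)$ because the two $O(|\ln\varepsilon|)$ contributions cancel ``using $v^\varepsilon\to v^*$ and the isotropy of the angular average of $\nabla\theta_i\otimes\nabla\theta_i$.'' But the convergences \eqref{thm1eq1}--\eqref{thm1eq2} hold only on compact subsets of $\Omega$, i.e.\ away from the vortices, whereas the divergent contributions to both $\int\langle(\mathrm{Hess}\,\eta)(\nabla_g v^\varepsilon),\nabla_g v^\varepsilon\rangle_g$ and $\int\Delta_{\mathcal M}\eta\,e_\varepsilon(v^\varepsilon)$ accumulate precisely in shrinking neighborhoods of the $b_i$, where $v^\varepsilon$ is not close to $v^*$. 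Making your cancellation rigorous for general $\eta$ requires uniform-in-$\varepsilon$ quantization estimates near the core (asymptotic isotropy of $\int\nabla v^\varepsilon\otimes\nabla v^\varepsilon$ up to $O(1)$, and an $O(1)$ bound on $\int_{B_\rho}(1-|v^\varepsilon|^2)^2/\varepsilon^2$); these do not follow from Lemma \ref{reg} together with \eqref{c3} and \eqref{lowerbd} alone, and you correctly flag this as the main obstacle without resolving it.

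The paper avoids the issue entirely: it takes $\eta$ compactly supported in $B_{2\rho}$ and \emph{linear} on $B_\rho$ around the single vortex being tracked. Then $\mathrm{Hess}\,\eta$ and $\Delta\eta$ vanish identically in a neighborhood of $b_1$ and $\eta$ vanishes near every other vortex, so for small $\delta$ the integrand of the bulk term is supported in the annulus $B_{2\rho}\setminus B_\rho$, exactly where \eqref{thm1eq1}--\eqref{thm1eq2} permit passage to the limit; no core estimate is needed, and the linearity of $\eta$ on $B_\rho$ simultaneously converts $\eta(b_1(t+\delta))-\eta(b_1(t))$ into the difference quotient $\langle b_1(t+\delta)-b_1(t),\mathbf A\rangle$. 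If you adopt this test function (and work with difference quotients in $t$ rather than distributionally, which is a cosmetic difference), the rest of your computation -- the stress-energy/boundary-integral reduction, the splitting of $\nabla\Theta$ into singular and regular parts, and the identification of the self-force $\pi d_1^2\nabla f(b_1)$ from the expansion of $e^{-2f}$ -- matches the paper's and closes the proof.
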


\begin{proof}
Fix $t \in [0, T_1)$. Without loss of generality, we may set $i=1$ and assume that $b_1(t)=0$.
For any $\mathbf{A} \in \mathbb{R}^2$ and $\rho >0$, consider a smooth function $\eta$ compactly supported in $B_{2\rho}$ such that $\eta = \langle \mathbf {A}, x \rangle$ in $B_{\rho}$. Then for $0 < \delta < \rho$ small enough, we have
\[
    \eta(b_1(t+\delta)) - \eta(0) = \langle b_1(t+\delta) - b_1(t), \mathbf{A} \rangle.
\]
Now if we integrate \eqref{Peq} from $t$ to $t+\delta$ and divide by $|\ln \varepsilon|$, then Theorem \ref{Thm1} implies
\begin{align}
    \eta(b_1(t+& \delta)) - \eta(0) \notag \\
    = &\frac{1}{\pi} \lim_{\varepsilon \rightarrow 0} \left \{  \int_t^{t+\delta} \int_{\mathcal M} \left[ \langle (\mbox{Hess } \eta)(\nabla_g  v^{\varepsilon}), \nabla_g v^{\varepsilon}\rangle_g  - \bigtriangleup_{\mathcal M} \eta e_{\varepsilon}(v^{\varepsilon}) \right] dv_g d\tau \right. \notag \\
    & \left. - \frac{1}{|\ln \varepsilon|^2} \int_t^{t+\delta} \int_{\mathcal M} \eta |v_t^{\varepsilon}|^2 dv_g d\tau \right\}.
\end{align}
As a consequence of \eqref{Peq} with $\eta=1$, the upper bound for $E_{\e}(v^{\e}_0)$ provided by \eqref{c3} and the lower bound for $E_{\e}(v^{\e}(\cdot,t))$ coming from \eqref{lowerbd},
the second integral on the right-hand side approaches zero. Since $\eta$ is linear in a neighborhood of $b_1$ and compactly supported away from $b_i$ for all $i \neq 1$, the support of Hess $\eta$ does not contain $\{b_1(\tau), b_2(\tau), ..., b_n(\tau) \}$ for $\tau \in [t, t+\delta]$ for small $\delta$. Hence by \eqref{thm1eq1} and \eqref{thm1eq2} we have
\begin{align} \label{9}
    \eta(b_1(t+& \delta)) - \eta(0) \notag \\
    = &\frac{1}{\pi} \int_t^{t+\delta} \int_{\mathcal M} \left[ \langle (\mbox{Hess } \eta)(\nabla_g  v^*), \nabla_g v^*\rangle_g  - \bigtriangleup_{\mathcal M} \eta \frac{|\nabla_g v^*|^2}{2}\right] dv_g d\tau.
\end{align}
Note that for any $\eta: {\mathcal M} \rightarrow {\mathbb R}$ and $v: {\mathcal M} \rightarrow {\mathbb R}^2$, using the metric \eqref{metric} we calculate
\begin{equation} \label{Qs}
    \langle (\mbox{Hess } \eta)(\nabla_g  v), \nabla_g v \rangle_g - \bigtriangleup_{\mathcal M} \eta \frac{|\nabla_g v|^2}{2} = e^{-4f}(q_1 - 2q_2 + q_3),
\end{equation}
where
\begin{equation}\label{Q1}
    q_1 = \langle (\mbox{D}^2 \eta) (\nabla v), \nabla v \rangle - \bigtriangleup \eta \frac{|\nabla v|^2}{2},
\end{equation}
\begin{equation}\label{Q2}
    q_2 = \langle \nabla v, \nabla f \rangle \cdot \langle \nabla v, \nabla \eta \rangle ,
\end{equation}
\begin{equation}\label{Q3}
    q_3 = |\nabla v|^2 \langle \nabla f, \nabla \eta \rangle.
\end{equation}
Now combining \eqref{Qs}-\eqref{Q3} we obtain
\begin{align} \label{10}
\int_{\mathcal M} & \left[ \langle (\mbox{Hess } \eta)(\nabla_g  v^*), \nabla_g v^*\rangle_g  - \bigtriangleup_{\mathcal M} \eta \frac{|\nabla_g v^*|^2}{2}\right] dv_g \notag \\
    = & \int_{B_{2\rho} \setminus B_{\rho}} e^{-2f} \left[ \langle (\mbox{D}^2 \eta) (\nabla v^*), \nabla v^* \rangle - \bigtriangleup \eta \frac{|\nabla v^*|^2}{2}\right] dx \notag \\
    & + \int_{B_{2\rho} \setminus B_{\rho}} e^{-2f} \left[ |\nabla v^*|^2 \langle \nabla f, \nabla \eta \rangle -2 \langle \nabla v^*, \nabla f \rangle \cdot \langle \nabla v^*, \nabla \eta \rangle \right] dx.
\end{align}
Let $\mathbf n = (n_1, n_2)$ be the outward unit normal. Integrating by parts we derive
\begin{align} \label{11}
\int_{B_{2\rho} \setminus B_{\rho}} & e^{-2f} \left[ \langle (\mbox{D}^2 \eta) (\nabla v^*), \nabla v^* \rangle - \bigtriangleup \eta \frac{|\nabla v^*|^2}{2}\right] dx \notag \\
    = & \int_{B_{2\rho} \setminus B_{\rho}} e^{-2f} \left[ \frac{1}{2}(\eta_{x_1 x_1} - \eta_{x_2 x_2})(|v^*_{x_1}|^2 - |v^*_{x_2}|^2) +2 (v^*_{x_1} \cdot v^*_{x_2}) \eta_{x_1 x_2}\right] dx \notag \\
    = & - \int_{B_{2\rho} \setminus B_{\rho}} e^{-2f} \left[ \bigtriangleup v^* \cdot (\eta_{x_1}v^*_{x_1} + \eta_{x_2}v^*_{x_2} )\right] dx \notag \\
       & + \int_{B_{2\rho} \setminus B_{\rho}} e^{-2f} \left[ (f_{x_1} \eta_{x_1} - f_{x_2} \eta_{x_2} )(|v^*_{x_1}|^2 - |v^*_{x_2}|^2) + 2(f_{x_1} \eta_{x_2} + f_{x_2} \eta_{x_1}) v^*_{x_1} \cdot v^*_{x_2} \right] dx \notag \\
       & - \int_{\partial B_\rho} e^{-2f} \left[ \frac{1}{2} (\eta_{x_1}n_1 - \eta_{x_2}n_2)(|v^*_{x_1}|^2 - |v^*_{x_2}|^2) + ( \eta_{x_2}n_1 + \eta_{x_1}n_2) v^*_{x_1} \cdot v^*_{x_2} \right]  ds \notag \\
   = & \int_{B_{2\rho} \setminus B_{\rho}} e^{-2f} \left[ -|\nabla v^*|^2 \langle \nabla f, \nabla \eta \rangle + 2 \langle \nabla v^* , \nabla f \rangle \cdot \langle \nabla v^* , \nabla \eta \rangle \right] dx \notag \\
       & + \int_{\partial B_\rho} e^{-2f} \left[ \frac{1}{2} |\nabla v^*|^2 \langle \nabla \eta, \mathbf n \rangle  - \langle \nabla v^*, \mathbf n \rangle \cdot \langle \nabla v^*, \nabla \eta \rangle\right] ds.
\end{align}
The last equality comes from the fact that $v^*$ is harmonic in $B_{2\rho} \setminus B_{\rho}$. Then by \eqref{10} and \eqref{11} we have
\begin{align} \label{12}
    \int_{\mathcal M} & \left[ \langle (\mbox{Hess } \eta)(\nabla_g  v^*), \nabla_g v^*\rangle_g  - \bigtriangleup_{\mathcal M} \eta \frac{|\nabla_g v^*|^2}{2}\right] dv_g \notag \\
    &= \int_{\partial B_\rho} e^{-2f} \left[ \frac{1}{2} |\nabla v^*|^2 \langle \nabla \eta, \mathbf n \rangle  - \langle \nabla v^*, \mathbf n \rangle \cdot \langle \nabla v^*, \nabla \eta \rangle\right] ds \notag \\
    &:= I_1(\tau) + I_2(\tau).
\end{align}
Recalling from Theorem \ref{Thm1} that $v^* = \mathbf n (\Theta)$, we evaluate the following terms at $x=\rho\mathbf n$:
\begin{align} \label{v2}
    |\nabla v^*|^2 \mathbf n &= \left[ \frac{1}{\rho^2} + \frac{2d_1}{\rho} \langle \sum_{i=2}^n d_i \frac{\mathbf t (\theta_i)}{|x-b_i|}, \mathbf t \rangle \right] \mathbf n + \mathbf E_1(x,\tau) \notag \\
    & = \left[ \frac{1}{\rho^2} + \frac{2d_1}{\rho} \langle \mathbf C(x,\tau), \mathbf t \rangle \right] \mathbf n + \mathbf E_1(x,\tau),
\end{align}
\begin{align} \label{gradv}
    \langle \nabla v^*, \mathbf n \rangle \cdot \nabla v^* = \frac{d_1}{\rho} \langle \mathbf C(x,\tau), \mathbf n \rangle \mathbf t +\mathbf E_2(x, \tau),
\end{align}
where
\begin{equation}
    \mathbf C(x,\tau) := \sum_{i=2}^n d_i \frac{\mathbf t (\theta_i)}{|x-b_i|},
\end{equation}
and $\mathbf E_1$, $\mathbf E_2$ are bounded. Next we expand $e^{-2f}$ near $x=0$,
\begin{align} \label{ef}
    e^{-2f} = e^{-2f(0)} - 2e^{-2f(0)} \langle \nabla f(0), \rho \mathbf n \rangle + O(\rho^2).
\end{align}
Since the most singular part, $e^{-2f(0)}\frac{1}{\rho^2} \mathbf n$, from \eqref{v2} makes no contribution to the integral in \eqref{12}, substituting \eqref{v2}, \eqref{gradv}, and \eqref{ef} into $I_1(\tau)$ and $I_2(\tau)$ we obtain
\begin{align}
    I_1(\tau) = e^{-2f(0)}\int_0^{2\pi} \left[ d_1 \langle \mathbf C(x,\tau), \mathbf t \rangle - \langle \nabla f(0), \mathbf n \rangle \right] \langle \mathbf n, \nabla \eta \rangle d\theta + O_\tau(\rho),
\end{align}
and
\begin{align}
     I_2(\tau) = -e^{-2f(0)}\int_0^{2\pi} d_1 \langle \mathbf C(x,\tau), \mathbf n \rangle \langle \mathbf t, \nabla \eta \rangle d \theta +  O_\tau(\rho),
\end{align}
where $O_\tau(\rho)$ indicates a quantity which is $O(\rho)$ with the implicit constant depending only on $\tau$.
Observe that for any fixed $\mathbf V \in {\mathbb R}^2$ we have
\[
    \int_0^{2\pi} \langle \mathbf V, \mathbf t \rangle \mathbf n d\theta = \pi \mathbf V^\perp = -\int_0^{2\pi} \langle \mathbf V, \mathbf n \rangle \mathbf t d\theta,
\]
and
\[
    \int_0^{2\pi} \langle \mathbf V, \mathbf n \rangle \mathbf n d\theta = \pi \mathbf V.
\]
Thus
\begin{align}\label{Is}
I_1(\tau) + I_2 (\tau) & = - \pi e^{-2f(0)} \langle \nabla f(0) -2 d_1 \mathbf C^\perp(0,\tau), \nabla \eta(0) \rangle +  O_\tau(\rho) \notag \\
    &= - \pi e^{-2f(0)} \langle \nabla f(0) -2 d_1 \mathbf C^\perp(0,\tau), \mathbf A \rangle +  O_\tau(\rho).
\end{align}
Combining \eqref{9}, \eqref{12}, and \eqref{Is} we derive
\begin{align} \label{dbdt}
    \langle \frac{d b_1}{dt} (t), \mathbf A \rangle &= \langle \lim_{\delta \rightarrow 0} \frac{b_1(t+\delta) - b_1(t)}{\delta}, \mathbf A \rangle \notag \\
    &= \langle \lim_{\delta \rightarrow 0}  - e^{-2f(0)} \frac{1}{\delta}\int_t^{t+\delta}  [\nabla f(0) -2 d_1 \mathbf C^\perp(0,\tau) +  O_\tau(\rho) ] d\tau, \mathbf A \rangle \notag \\
    &= \langle - e^{-2f(b_1(t))} [ \nabla f(b_1(t)) -2 d_1 \mathbf C^\perp(b_1(t),t) ], \mathbf A \rangle + O(\rho).
\end{align}
Note that \eqref{dbdt} holds for all $\mathbf A \in {\mathbb R}^2$. Also since
\[
    W(\mathbf b, \mathbf d) := \pi \sum_{i=1}^n f(b_i) - \pi \sum_{i \neq j} d_i d_j \ln |b_i-b_j|,
\]
we have
\begin{align}
    \frac{1}{\pi} (\nabla_g)_{b_1} W(\mathbf b, \mathbf d) &= e^{-2f(b_1)}\left[ \nabla f(b_1) - 2d_1 \sum_{i=2}^n d_i \frac{b_1 - b_i}{|b_1 - b_i|^2} \right] \notag \\
    & = e^{-2f(b_1)}\left[ \nabla f(b_1) - 2d_1 C^\perp(b_1) \right]. \notag
\end{align}
Then taking $\rho \rightarrow 0$ in \eqref{dbdt} we obtain
\begin{equation}
     \frac{d b_1}{dt} = - \frac{1}{\pi} (\nabla_g)_{b_1} W(\mathbf b, \mathbf d).
\end{equation}
\end{proof}

\section{Motion Law for Gross-Pitaevskii Vortices on 2-Manifold}
In this section we consider the initial value problem
\begin{equation} \label{GP}
                \left\{\begin{array}{ll}
                 iu_t^{\varepsilon} - \bigtriangleup _{\mathcal M} u^{\varepsilon} = \frac{u^{\varepsilon}}{\varepsilon^2}(1-|u^{\varepsilon}|^2) & \mbox{in $\mathcal M \times \mathbb R_+$} \\
                 u^{\varepsilon} = u_0^{\varepsilon} & \mbox{on $\mathcal M \times $$\{ t = 0 \} $}.
                \end{array} \right.
\end{equation}
for $u: {\mathcal M} \rightarrow \mathbb C$. Global in time well-posedness in this defocusing setting is provided, for example,
by the results in \cite{Bu}

We define the current
\[
    j(u) := (iu) \cdot \nabla_g u.
\]
Then the relationship $J(u) = \frac{1}{2} \nabla \times j(u)$ between the Jacobian of $u$  and the current on ${\mathbb R}^2$ motivates the following definition of the signed weak Jacobian $J(u)$ for $u \in H^1({\mathcal M})$ via
\[
    \langle J(u), \eta \rangle = \frac{1}{2} \int_{\mathcal M} \langle j(u), \nabla_g^{\perp} \eta \rangle_g d v_g, \mbox{ for all } \eta \in C^1(\mathcal M).
\]
Note that $J(u)$ can be viewed as an element of the dual of $C^1$.
We first establish the following identity, which will be used in our study of dynamics of Gross-Pitaevskii vortices.
\begin{prop} \label{P2}
Let $u^\varepsilon$ be a solution to \eqref{GP} and let $\eta : \mathcal{M} \rightarrow \mathbb{R}$ be any smooth function. Then we have
\begin{equation} \label{P2eq}
\frac{d}{dt} \langle J(u^\varepsilon), \eta \rangle = - \int_{\mathcal M} \langle \nabla_g u^{\varepsilon}, \nabla_{g_{\nabla_g u^{\varepsilon}}} \nabla^{\perp}_g \eta \rangle_g\, dv_g.
\end{equation}
\end{prop}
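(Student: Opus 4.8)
The plan is to differentiate the definition of $\langle J(u^\varepsilon),\eta\rangle$ directly in time and then substitute \eqref{GP} to convert $\partial_t$ into spatial operators. Writing $u=u^\varepsilon$ and $w=\nabla_g^\perp\eta$, I first commute the time derivative inside the integral, $\frac{d}{dt}\langle J(u),\eta\rangle=\frac12\int_{\mathcal M}\langle\partial_t j(u),w\rangle_g\,dv_g$, and expand $\partial_t j(u)=(iu_t)\cdot\nabla_g u+(iu)\cdot\nabla_g u_t$ by the product rule. Into the first term I insert $iu_t=\Delta_{\mathcal M}u+\frac{u}{\varepsilon^2}(1-|u|^2)$; into the second I insert the equivalent form $u_t=-i\big(\Delta_{\mathcal M}u+\frac{u}{\varepsilon^2}(1-|u|^2)\big)$ obtained by multiplying \eqref{GP} by $-i$, and use the elementary identity $(iu)\cdot(-i\,V)=-\,u\cdot V$ for the complex structure (applied componentwise in the target $\mathbb R^2$, with the tangent index of $V=\nabla_g(\cdots)$ free). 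This produces two ``Laplacian'' pieces, $(\Delta_{\mathcal M}u)\cdot\nabla_g u$ and $-\,u\cdot\nabla_g\Delta_{\mathcal M}u$, together with two ``potential'' pieces coming from the nonlinearity.

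The second step disposes of the potential pieces. Using $u\cdot\nabla_g u=\tfrac12\nabla_g|u|^2$ one checks that each potential piece is a scalar function of $|u|^2$ times $\nabla_g|u|^2$, hence of the form $\nabla_g\Phi$ for some $\Phi=\Phi(|u|^2)$. Pairing a pure gradient with $w=\nabla_g^\perp\eta$ and integrating by parts gives $\int_{\mathcal M}\langle\nabla_g\Phi,\nabla_g^\perp\eta\rangle_g\,dv_g=-\int_{\mathcal M}\Phi\,\operatorname{div}_g(\nabla_g^\perp\eta)\,dv_g=0$, since the skew gradient is divergence free (it is the Hamiltonian field of $\eta$ for the area form, whose flow is area preserving) and $\mathcal M$ has no boundary. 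This is exactly what removes all $\varepsilon$-dependence, leaving $\frac{d}{dt}\langle J(u),\eta\rangle=\frac12\int_{\mathcal M}\big\langle(\Delta_{\mathcal M}u)\cdot\nabla_g u-u\cdot\nabla_g\Delta_{\mathcal M}u,\,w\big\rangle_g\,dv_g$.

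The final step is a componentwise integration by parts. For a single real component $\phi=u_k$ with $X=\nabla_g\phi$, I integrate by parts in $-\tfrac12\int\phi\,\langle\nabla_g\Delta_{\mathcal M}\phi,w\rangle_g=-\tfrac12\int\phi\,w(\Delta_{\mathcal M}\phi)$, using $\operatorname{div}_g w=0$ again, and find that it equals $\tfrac12\int(\Delta_{\mathcal M}\phi)\langle X,w\rangle_g$; thus the two Laplacian pieces coincide and their sum reduces to $\int(\Delta_{\mathcal M}\phi)\langle X,w\rangle_g\,dv_g$. Writing $\Delta_{\mathcal M}\phi=\operatorname{div}_g X$ and integrating by parts once more gives $-\int\langle X,\nabla_g\langle X,w\rangle\rangle_g$, and metric compatibility of the Levi-Civita connection expands $X\langle X,w\rangle=\langle\nabla_X X,w\rangle+\langle X,\nabla_X w\rangle$. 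Here $\nabla_X X=\nabla_{\nabla_g\phi}\nabla_g\phi=\tfrac12\nabla_g|\nabla_g\phi|^2$ is again a gradient, by symmetry of the Hessian, so its pairing with $w$ integrates to zero, leaving precisely $-\int\langle\nabla_g\phi,\nabla_{g_{\nabla_g\phi}}\nabla_g^\perp\eta\rangle_g\,dv_g$. Summing over $k=1,2$ yields \eqref{P2eq}.

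I expect the curved integrations by parts to be the main obstacle: one must keep covariant rather than ordinary derivatives throughout and verify that every auxiliary term generated is either a pure gradient---annihilated by the divergence-free test field---or reorganizes into the single covariant-derivative expression on the right. The clean fact that makes this go through, and that hides any explicit curvature contribution, is that both $\nabla_{\nabla_g\phi}\nabla_g\phi$ and the nonlinear terms are gradients, so only the genuinely tensorial quantity $\langle\nabla_g\phi,\nabla_{\nabla_g\phi}\,\nabla_g^\perp\eta\rangle_g$ survives.
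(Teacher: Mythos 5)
Your proposal is correct and follows essentially the same route as the paper: substitute the equation into $\partial_t j(u^\varepsilon)$, kill every pure-gradient term (the nonlinearity and $\tfrac12\nabla_g|\nabla_g u^\varepsilon|^2$) against the divergence-free field $\nabla_g^\perp\eta$, and reduce the two Laplacian pieces to a single term by integration by parts. The only cosmetic difference is that the paper shows the two halves $A_1$ and $A_2$ are equal in one stroke (potential terms included), whereas you dispose of the potential pieces first and then equate the Laplacian pieces; the substance is identical.
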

\begin{proof}
By the definition of $J(u)$, we have
\[
    \frac{d}{dt} \langle J(u^\varepsilon), \eta \rangle = \frac{1}{2} \int_{\mathcal M} \langle \frac{d}{dt} j(u^{\varepsilon}), \nabla_g^{\perp} \eta \rangle_g\, d v_g.
\]
Applying \eqref{GP} we deduce
\begin{align}
    \frac{d}{dt} j(u^{\varepsilon}) =&  (iu^{\varepsilon}_t) \cdot \nabla_g u^{\varepsilon} + (iu^\varepsilon) \cdot \nabla_g u^{\varepsilon}_t \notag \\
    =& (\bigtriangleup_{\mathcal M} u^{\varepsilon} + \frac{1}{\varepsilon^2}(1-|u^{\varepsilon}|^2)u^{\varepsilon}) \cdot \nabla_g u^{\varepsilon} \notag \\
      &- u^{\varepsilon} \cdot \nabla_g (\bigtriangleup_{\mathcal M} u^{\varepsilon} + \frac{1}{\varepsilon^2}(1-|u^{\varepsilon}|^2)u^{\varepsilon}). \notag
\end{align}
Thus
\begin{equation} \label{As}
     \frac{d}{dt} \langle J(u^\varepsilon), \eta \rangle = \frac{1}{2} (A_1 + A_2),
\end{equation}
where
\begin{align}
    A_1 &= \int_{\mathcal M} (\bigtriangleup_{\mathcal M} u^{\varepsilon} + \frac{1}{\varepsilon^2}(1-|u^{\varepsilon}|^2)u^{\varepsilon}) \cdot \langle \nabla_g u^{\varepsilon}, \nabla_g^\perp \eta \rangle_g dv_g \notag \\
    &= \int_{\mathcal M} \left[ \bigtriangleup_{\mathcal M} u^{\varepsilon} \cdot \langle \nabla_g u^{\varepsilon}, \nabla_g^\perp \eta \rangle_g - \langle \nabla_g \frac{V(u^{\varepsilon})}{\varepsilon^2}, \nabla_g^\perp \eta \rangle_g \right] dv_g, \notag
\end{align}
and
\[
    A_2 = -  \int_{\mathcal M} u^{\varepsilon} \cdot \langle \nabla_g (\bigtriangleup_{\mathcal M} u^{\varepsilon} + \frac{1}{\varepsilon^2}(1-|u^{\varepsilon}|^2)u^{\varepsilon}), \nabla_g^\perp \eta \rangle_g dv_g.
\]
Note that $\mbox{div} \nabla_g^\perp \eta =0$. Then integrating by parts we obtain
\begin{align} \label{A1}
    A_1 &=  \int_{\mathcal M} \bigtriangleup_{\mathcal M} u^{\varepsilon} \cdot \langle \nabla_g u^{\varepsilon}, \nabla_g^\perp \eta \rangle_g dv_g \notag \\
    &= - \int_{\mathcal M} \langle \nabla_g u^{\varepsilon}, \nabla_g \langle \nabla_g u^{\varepsilon}, \nabla_g^\perp \eta \rangle_g dv_g \notag \\
    &= - \int_{\mathcal M} \left[ \langle \nabla_g u^{\varepsilon}, \nabla_{g_{\nabla_g u^{\varepsilon}}} \nabla^{\perp}_g \eta \rangle_g + \frac{1}{2} \langle \nabla_g |\nabla_g v^{\varepsilon}|^2, \nabla_g^\perp \eta \rangle_g \right] dv_g \notag \\
    &=  - \int_{\mathcal M} \langle \nabla_g u^{\varepsilon}, \nabla_{g_{\nabla_g u^{\varepsilon}}} \nabla^{\perp}_g \eta \rangle_g dv_g,
\end{align}
and
\begin{align} \label{A2}
    A_2 & = \int_{\mathcal M} (\bigtriangleup_{\mathcal M} u^{\varepsilon} + \frac{1}{\varepsilon^2}(1-|u^{\varepsilon}|^2)u^{\varepsilon}) \cdot \mbox{div}(u^{\varepsilon} \nabla_g^\perp \eta) dv_g \notag \\
    &= \int_{\mathcal M} (\bigtriangleup_{\mathcal M} u^{\varepsilon} + \frac{1}{\varepsilon^2}(1-|u^{\varepsilon}|^2)u^{\varepsilon}) \cdot \langle \nabla_g u^{\varepsilon}, \nabla_g^\perp \eta \rangle_g dv_g \notag \\
    & = A_1.
\end{align}
Combining \eqref{As}, \eqref{A1}, and \eqref{A2} we have the desired equality.
\end{proof}
For $\{u^\varepsilon\}_{0<\varepsilon \leq 1} \subset H^1(\mathcal M)$ satisfying the almost energy minimizing assumption \eqref{am} given below,
 the following upper bound results are proved in \cite{CJ} for sequences $\{ J(u^\varepsilon) \}$ converging weakly as measures.  A similar result is also established in \cite{LX}, Proposition 3.3. Since the proof is independent of the geometry, the results adapt without change to our setting on a manifold:
\begin{lem}[cf. \cite{CJ}, Theorem 1.4.5] \label{limsup}
Let $\{ v^\varepsilon \} \subset H^1 ({\mathcal M})$ be any sequence such that
\[
     J(v^{\varepsilon}) \rightharpoonup \pi \sum_{i=1}^n d_i \delta_{a_i}
\]
weakly as measures. Here $a_i \in \mathcal M$, and $d_i \in \{\pm 1\}$ with $\sum_{i=1}^n d_i =0$.
Suppose that there exists some $\gamma >0$ such that as $\varepsilon \rightarrow 0$,
\begin{equation} \label{am}
     E_{\varepsilon}(v^\varepsilon) \leq n \pi |\ln \varepsilon|  + W(\mathbf b, \mathbf d) + \gamma + o(1),
\end{equation}
where $\mathbf d = (d_1, d_2,..., d_n)$, $\mathbf b = (b_1, b_2, ..., b_n)$, and $b_i = h(a_i)$ (cf. \eqref{metric}-\eqref{Wdefna}). Then there exists a constant $C$ such that for every $\rho >0$,
\begin{equation}
    {\lim \sup}_{\varepsilon \rightarrow 0} ||\frac{j(v^\varepsilon)}{|v^\varepsilon|} - j(H)||^2_{L^2({\mathcal M} \setminus \bigcup_{i=1}^n B^g_\rho(a_i))} \leq C \gamma,
\end{equation}
\begin{equation}
    {\lim \sup}_{\varepsilon \rightarrow 0} ||\nabla_g |v^\varepsilon|||^2_{L^2({\mathcal M} \setminus \bigcup_{i=1}^n B^g_\rho(a_i))} \leq C \gamma.
\end{equation}
Here $H = H(\mathbf a, \mathbf d)$ is the canonical harmonic map, which is unique up to an arbitrary rotation. 
By canonical harmonic map we mean that $H(\mathbf a, \mathbf d)$ is a harmonic map into ${\mathcal S}^1$ with singularities at points $\mathbf a = (a_1, a_2, ..., a_n)$ with $a_i \in \mathcal M$ such that the winding number (degree) of $H$ about $a_i$ is $d_i$ with $\mathbf d = (d_1, d_2, ..., d_n)$.
\end{lem}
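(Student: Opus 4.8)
The plan is to derive both estimates from a single \emph{sharp} lower bound on $E_\varepsilon(v^\varepsilon)$ which, compared against the almost-minimality hypothesis \eqref{am}, forces the excess energy $\gamma$ to dominate the quantities to be estimated. Since in two dimensions the Dirichlet integral $\int_{\mathcal M}\frac{|\nabla_g u|^2}{2}\,dv_g$ is conformally invariant, writing $g=e^{2f}(dx_1^2+dx_2^2)$ through the chart $h$ turns the gradient part of $E_\varepsilon$ into the Euclidean Dirichlet energy on $h(\mathcal M)$ and pushes all metric dependence into the potential term (an effective $\varepsilon(x)=\varepsilon e^{-f}$) and into the core corrections $\pi\sum_i d_i^2 f(b_i)$ entering $W$. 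This is precisely why the Euclidean arguments of \cite{CJ} and \cite{LX} transfer verbatim, and I would make this reduction explicit at the outset.

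First I would use the pointwise splitting of the energy density, valid where $v^\varepsilon\neq0$,
\[
\frac{|\nabla_g v^\varepsilon|^2}{2}=\frac{|\nabla_g|v^\varepsilon||^2}{2}+\frac{|j(v^\varepsilon)|^2}{2|v^\varepsilon|^2},
\]
so that
\[
E_\varepsilon(v^\varepsilon)=\int_{\mathcal M}\Big(\frac{|\nabla_g|v^\varepsilon||^2}{2}+\frac{(1-|v^\varepsilon|^2)^2}{4\varepsilon^2}\Big)\,dv_g+\int_{\mathcal M}\frac{|j(v^\varepsilon)|^2}{2|v^\varepsilon|^2}\,dv_g.
\]
The first integral is manifestly nonnegative and will appear as pure excess. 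For the current term I would fix $\rho>0$, set $\Omega_\rho:=\mathcal M\setminus\bigcup_i B^g_\rho(a_i)$, and on each core $B^g_\rho(a_i)$ invoke the standard ball-construction lower bound (as in \cite{JS}) to recover $\sum_i\big(\pi\ln\frac{\rho}{\varepsilon}+\pi d_i^2 f(b_i)\big)-C$, accounting for the divergent $n\pi|\ln\varepsilon|$ together with the first sum in $W$. On $\Omega_\rho$ I would expand against the canonical harmonic map $H$:
\begin{align*}
\int_{\Omega_\rho}\frac{1}{2}\Big|\frac{j(v^\varepsilon)}{|v^\varepsilon|}\Big|^2
&=\int_{\Omega_\rho}\frac{1}{2}\Big|\frac{j(v^\varepsilon)}{|v^\varepsilon|}-j(H)\Big|^2
+\int_{\Omega_\rho}\Big\langle \frac{j(v^\varepsilon)}{|v^\varepsilon|}-j(H),\,j(H)\Big\rangle_g \\
&\quad +\int_{\Omega_\rho}\frac{1}{2}|j(H)|^2,
\end{align*}
and identify the last integral, after $\varepsilon\to0$ and then $\rho\to0$, with $\pi\sum_i d_i^2\ln\frac{1}{\rho}-\pi\sum_{i\neq j}d_id_j\ln|b_i-b_j|+o_\rho(1)$, so the $\ln\rho$ terms cancel against the cores and the exterior contribution reconstitutes the second sum in $W$.

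The crux is to show the cross term is $o(1)$. Here I would use that $H$ is $S^1$-valued harmonic, so that $j(H)$ is a curl- and divergence-free field away from the $a_i$, and that the circulations of $j(v^\varepsilon)/|v^\varepsilon|$ around each $\partial B^g_\rho(a_i)$ converge to $2\pi d_i$ --- exactly the content of $J(v^\varepsilon)\rightharpoonup\pi\sum_i d_i\delta_{a_i}$. Integrating by parts and matching these circulations makes the cross term vanish in the limit. Assembling the pieces yields
\[
E_\varepsilon(v^\varepsilon)\ge n\pi|\ln\varepsilon|+W(\mathbf b,\mathbf d)+\frac{1}{2}\int_{\Omega_\rho}\Big|\frac{j(v^\varepsilon)}{|v^\varepsilon|}-j(H)\Big|^2\,dv_g+\frac{1}{2}\int_{\Omega_\rho}|\nabla_g|v^\varepsilon||^2\,dv_g-o(1),
\]
and subtracting this from \eqref{am} gives both estimates at once, with $C=2$, after passing to the $\limsup$. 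I expect the main obstacle to be making the lower bound genuinely \emph{sharp}: controlling the core region and the cross term precisely enough that the divergent piece and the full renormalized energy $W$ are reproduced to accuracy $o(1)$, not merely to leading order, since it is this sharpness --- rather than the gross energy estimate --- that converts the $O(\gamma)$ gap into quantitative $L^2$ control.
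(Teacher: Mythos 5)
The paper gives no proof of this lemma, deferring entirely to \cite{CJ} (Theorem 1.4.5) and \cite{LX} (Proposition 3.3) with the remark that the planar argument is independent of the geometry; your sketch is a faithful reconstruction of exactly that planar argument (the modulus/current splitting of the energy density, the sharp core lower bound, the expansion of the exterior current energy about $j(H)$ with the cross term killed by the circulation information in $J(v^\varepsilon)\rightharpoonup\pi\sum_i d_i\delta_{a_i}$, and subtraction against \eqref{am}), so it is essentially the same approach. Your opening observation --- that conformal invariance of the two-dimensional Dirichlet integral is what lets the Euclidean proof transfer, pushing all metric dependence into an effective core size $\varepsilon e^{-f}$ and hence into the $\pi\sum_i d_i^2 f(b_i)$ term of $W$ --- is precisely the unstated justification behind the paper's ``independent of the geometry'' claim.
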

Now we make the following assumption on the initial data $u^\varepsilon_0$. We assume that \eqref{c1}, \eqref{c2}, \eqref{c3}, and \eqref{c4} hold. Furthermore, we assume that
\begin{equation} \label{assumH1conv}
    J(u^\varepsilon_0) \rightharpoonup \pi \sum_{i=1}^n d_i \delta_{\alpha_i} \mbox{weakly as measures,}
\end{equation}
where $\alpha_i$ is given by the assumption \eqref{c4}.
Finally we assume that $u^\varepsilon_0$ is almost energy minimizing, i.e. for every $\rho>0$, $u^\varepsilon_0$ satisfies
\begin{equation} \label{aem}
     E_{\varepsilon}(u^\varepsilon_0) \leq n \pi |\ln \varepsilon|  + W(\mathbf \beta, \mathbf d) + o(1)
\end{equation}
as $\varepsilon \rightarrow 0$,
where $\beta = (\beta_1, \beta_2, ..., \beta_n)$, and $\beta_i = h(\alpha_i)$. With these assumptions, the following results established in \cite{CJ} carry over without change to our setting.
\begin{thm}[cf. \cite{CJ}, Theorem 1.4.1]\label{Thm21}
    Suppose $u^{\varepsilon}$ is a solution to \eqref{GP} with the initial data $u^\varepsilon_0$ satisfying the above assumptions. Then there exists a subsequence $\{ \varepsilon_n\}\to 0$  and a finite set of points $\{a_i(t)\}_{i=1}^n \subset \mathcal M$ such that $a_i(0) = \alpha_i$, functions $\{a_i(t)\}_{i=1}^n$ are Lipschitz continuous, and
    \begin{equation} \label{Jconv}
           J(u^{\varepsilon_n}) \rightharpoonup \pi \sum_{i=1}^n d_i \delta_{a_i} \mbox{ weakly as measures}
    \end{equation}
   for $t \in [0, T_1)$, where
    \begin{equation}
        T_1 := \inf \{ t>0: \min_{i \neq j} \{\mbox{dist}(a_i(t), a_j(t))\}=0 \}.\label{T1defn}
    \end{equation}
Moreover,
\begin{equation} \label{convabs}
    |u^{\varepsilon_n}| \rightarrow 1 \mbox{ in } L^2([0,T_1]; L^2 (\mathcal M)),
\end{equation}
and
\begin{equation} \label{jconv}
    j(u^{\varepsilon_n}) \rightharpoonup j(H) \mbox{ in } L^1([0,T_1]; L^1_{loc}  ({\mathcal M} \setminus \bigcup_{i=1}^n a_i )),
\end{equation}
where $H = H(\mathbf a(t), \mathbf d)$ is the canonical harmonic map.
\end{thm}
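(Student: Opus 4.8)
The plan is to follow the scheme of \cite{CJ}, substituting the conservative structure of \eqref{GP} for the dissipative structure exploited in Section 3, and to check that the single new geometric feature---the curvature hidden in the covariant derivative on the right-hand side of \eqref{P2eq}---does not spoil any estimate. The starting point is that \eqref{GP} conserves the Ginzburg--Landau energy: pairing \eqref{GP} with $u_t^\varepsilon$ and taking real parts gives $\frac{d}{dt}E_\varepsilon(u^\varepsilon)=0$, so that the almost minimality assumption \eqref{aem} propagates in time,
\begin{equation}
 E_\varepsilon(u^\varepsilon(\cdot,t)) = E_\varepsilon(u_0^\varepsilon) \leq n\pi|\ln\varepsilon| + W(\mathbf \beta,\mathbf d) + o(1) \qquad \text{for all } t .
\end{equation}
In contrast to the heat flow, no rescaling of time is required, consistently with the absence of a factor $|\ln\varepsilon|$ in \eqref{P2eq}.

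First I would establish the existence and Lipschitz regularity of the trajectories, the heart of which is to read an order-one velocity bound out of Proposition \ref{P2}. Estimating the right-hand side of \eqref{P2eq} crudely by $C\norm{\eta}_{C^2}\int_{\mathcal M}|\nabla_g u^\varepsilon|^2\,dv_g$ only yields $O(|\ln\varepsilon|)$, whereas the vortices move at speed $O(1)$; as in \cite{CJ} one recovers the sharp bound by testing against functions $\eta$ that are linear in the conformal coordinates near each vortex, so that the divergent core energy is annihilated by the Euclidean Hessian of $\eta$ and only the order-one ambient energy survives. The new term, namely the Christoffel contribution to $\nabla_{g_{\nabla_g u^\varepsilon}}\nabla^\perp_g\eta$, involves only first derivatives of $\eta$ multiplied by the bounded Gaussian curvature of $\mathcal M$, and so is absorbed without generating any $|\ln\varepsilon|$ growth. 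Feeding this velocity bound into the compactness theory for Jacobians under the bound \eqref{c3} (which is local and metric-independent, hence carries over verbatim) gives, for each fixed $t$, concentration $J(u^\varepsilon(\cdot,t))\rightharpoonup\pi\sum_i d_i\delta_{a_i(t)}$ along a subsequence; an Arzel\`a--Ascoli argument on a countable dense set of times, followed by a diagonal extraction, then produces Lipschitz curves $a_i(t)$ with $a_i(0)=\alpha_i$ and a single sequence $\varepsilon_n\to0$ for which \eqref{Jconv} holds on $[0,T_1)$, with $T_1$ as in \eqref{T1defn}.

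The two remaining convergences are then extracted from the conserved energy and from Lemma \ref{limsup}. The potential part of the energy gives $\int_{\mathcal M}(1-|u^\varepsilon|^2)^2\,dv_g\leq 4\varepsilon^2 E_\varepsilon(u^\varepsilon)\leq C\varepsilon^2|\ln\varepsilon|\to0$, and since $\big||u^\varepsilon|-1\big|\leq\big||u^\varepsilon|^2-1\big|$ this yields $|u^{\varepsilon_n}|\to1$ in $L^2(\mathcal M)$ at each time and, after integration in $t$, the space-time statement \eqref{convabs}. For the current, the energy bound makes $\{j(u^{\varepsilon_n})\}$ bounded in $L^2$ off the vortices, so a weak limit $j^*$ exists; the defining relation between $J$ and $j$ forces $j^*$ to be curl-free away from the $a_i$ with circulation $2\pi d_i$ about each $a_i$, while the continuity identity $\mbox{div}\,j(u^\varepsilon)=\frac12\partial_t|u^\varepsilon|^2$ (obtained by pairing \eqref{GP} with $u^\varepsilon$ and taking imaginary parts) together with $|u^{\varepsilon_n}|\to1$ forces $\mbox{div}\,j^*=0$. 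On the simply connected $\mathcal M$ these conditions identify $j^*=j(H)$ uniquely, giving \eqref{jconv}; Lemma \ref{limsup} supplies the quantitative companion, upgrading this to strong convergence whenever the excess energy tends to zero.

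The step I expect to be the main obstacle is precisely this velocity bound: reconciling the $O(|\ln\varepsilon|)$ size of the crude estimate for \eqref{P2eq} with the $O(1)$ motion of the vortices, which hinges on the cancellation of the logarithmically divergent core energy and on the verification that every constant entering the argument depends only on $\norm{\eta}_{C^2}$, the conserved energy, and the bounded geometry of $\mathcal M$. Once this is in place, the rest of the proof is a direct transcription of \cite{CJ}.
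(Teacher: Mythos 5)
Your proposal follows the same route the paper takes: the paper offers no independent proof of Theorem \ref{Thm21}, simply asserting that the arguments of \cite{CJ} (energy conservation along \eqref{GP}, the Jacobian evolution identity of Proposition \ref{P2} tested against locally linear functions, Jacobian compactness and Arzel\`a--Ascoli for the paths, and the identification of the limiting current with $j(H)$ via Lemma \ref{limsup}) carry over unchanged to the manifold setting. Your sketch correctly identifies these ingredients and the one genuinely new point---that the covariant-derivative/curvature contribution in \eqref{P2eq} is lower order and does not disturb the $O(1)$ velocity bound---so it is consistent with the paper's treatment.
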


We can now establish our main result of this section.
\begin{thm} \label{Thm22}
 Let functions $\{a_i (t)\}_{i=1}^n$ be given as in Theorem \ref{Thm21}. Then for $b_i(t) = h(a_i(t))$, the collection $\{b_i(t)\}_{i=1}^n\subset\R^2$ satisfies the system of ODE's:
    \begin{equation} \label{thmode2}
         \left\{\begin{array}{ll}
        d_i \frac{d}{dt} b_i = - \frac{1}{\pi} (\nabla_g^{\perp})_{b_i} W(\mathbf{b}, \mathbf{d}) & \mbox{for } t \in (0,T_1) \\
        b_i(0) = \beta_i,
        \end{array} \right.
    \end{equation}
where $\beta_i = h(\alpha_i)$, $\mathbf b = (b_1, b_2, ..., b_n)$, and $\mathbf d = (d_1, d_2, ..., d_n)$.
\end{thm}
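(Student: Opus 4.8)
The plan is to follow the template of the heat-flow argument in Theorem~\ref{Thm2}, with the Jacobian evolution identity \eqref{P2eq} of Proposition~\ref{P2} playing the role of Proposition~\ref{P} and the convergence statements of Theorem~\ref{Thm21} replacing those of Theorem~\ref{Thm1}. Fix $t\in(0,T_1)$ and, without loss of generality, set $i=1$ and normalize the conformal coordinates so that $b_1(t)=0$. For $\mathbf A\in\R^2$ and small $\rho>0$ I choose a smooth $\eta$, compactly supported in $B_{2\rho}$ and equal to $\langle\mathbf A,x\rangle$ on $B_\rho$, with $\rho$ small enough that $B_{2\rho}$ avoids every $b_j$, $j\neq1$. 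The quantity $\langle J(u^\varepsilon),\eta\rangle$ now plays the part that $\int_{\mathcal M}\eta\,e_\varepsilon(v^\varepsilon)\,dv_g$ played in Section 3: because $\eta$ is affine near $b_1$ and supported away from the other vortices, the weak convergence \eqref{Jconv} gives $\langle J(u^{\varepsilon_n})(s),\eta\rangle\to\pi d_1\langle\mathbf A,b_1(s)\rangle$ for each $s$. Integrating \eqref{P2eq} over $[t,t+\delta]$ and letting $\varepsilon_n\to0$ then produces $\pi d_1\langle\mathbf A,\,b_1(t+\delta)-b_1(t)\rangle$ on the left, and the Lipschitz continuity of the $a_i$, hence of the $b_i$, from Theorem~\ref{Thm21} lets me divide by $\delta$ and send $\delta\to0$, so the left-hand side contributes $\pi d_1\langle\mathbf A,\tfrac{d}{dt}b_1(t)\rangle$ (at almost every $t$, and at every $t$ by continuity of the right-hand side).

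The heart of the matter is the limit of the right-hand side of \eqref{P2eq}. Passing $\varepsilon_n\to0$ inside the integral requires care, since the integrand is quadratic in $\nabla_g u^\varepsilon$ and weak convergence of Jacobians does not suffice: writing $u^\varepsilon=|u^\varepsilon|\,e^{i\phi^\varepsilon}$ locally, the modulus contributions vanish because $\nabla_g|u^{\varepsilon_n}|\to0$, while the phase contributions demand the \emph{strong} $L^2$ convergence of $j(u^{\varepsilon_n})/|u^{\varepsilon_n}|$ to $j(H)$ on the annulus $B_{2\rho}\setminus B_\rho$ supplied by Lemma~\ref{limsup}, with $v^*=H=\mathbf n(\Theta)$. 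Granting this replacement, the operator $\nabla_{g_{\nabla_g v^*}}\nabla_g^\perp\eta$ involves two derivatives of $\eta$, so after expressing the covariant skew-Hessian term in the conformal coordinates \eqref{metric1} I obtain a Euclidean principal part together with correction terms carrying $\nabla f$, in parallel with \eqref{Qs}--\eqref{Q3} but with $\nabla^\perp\eta$ in place of $\nabla\eta$. Since $v^*$ is harmonic on the annulus, an integration by parts as in \eqref{11} collapses the bulk integral to a boundary integral over $\partial B_\rho$, the contribution from $\partial B_{2\rho}$ vanishing because $\eta$ is compactly supported.

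It then remains to evaluate this boundary integral at $x=\rho\mathbf n$ using the expansions \eqref{v2}, \eqref{gradv}, and \eqref{ef}, discarding the $\rho^{-2}$ term which integrates to zero. The crucial new feature relative to Section 3 is that $\nabla_g^\perp\eta$ rotates the vectors entering the angular integrals, so that $\int_0^{2\pi}\langle\mathbf V,\mathbf t\rangle\mathbf n\,d\theta=\pi\mathbf V^\perp$ and $\int_0^{2\pi}\langle\mathbf V,\mathbf n\rangle\mathbf n\,d\theta=\pi\mathbf V$ now produce a factor $\mathbf V^\perp$ where the heat flow produced $\mathbf V$. Sending $\delta\to0$ and then $\rho\to0$, I expect to reach
\begin{equation*}
\pi d_1\big\langle\tfrac{d}{dt}b_1,\mathbf A\big\rangle=-\pi e^{-2f(0)}\big\langle[\nabla f(0)-2d_1\mathbf C^\perp(0)]^\perp,\mathbf A\big\rangle;
\end{equation*}
since $\tfrac1\pi(\nabla_g^\perp)_{b_1}W=e^{-2f(b_1)}[\nabla f(b_1)-2d_1\mathbf C^\perp(b_1)]^\perp$ and $\mathbf A$ is arbitrary, dividing by $d_1$ yields \eqref{thmode2}.

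I expect the principal obstacle to be the justification of the limit in the quadratic right-hand side: the estimate furnished by Lemma~\ref{limsup} degrades with the excess energy $\gamma$, so one must argue that, for the conservative flow \eqref{GP}, the almost energy minimizing hypothesis \eqref{aem} together with exact conservation of $E_\varepsilon$ keeps this excess negligible on the annuli as $\varepsilon_n\to0$, making the replacement of $\nabla_g u^{\varepsilon_n}$ by $\nabla_g v^*$ legitimate. A secondary, purely computational difficulty is the orientation bookkeeping of the preceding two paragraphs — carrying the skew gradient $\nabla_g^\perp$ correctly through the conformal change of variables and the boundary integration, with every $\perp$ and every sign tracked — which is precisely what converts the gradient flow of Theorem~\ref{Thm2} into the Hamiltonian law \eqref{thmode2}.
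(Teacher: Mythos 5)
Your proposal correctly reproduces several ingredients of the paper's argument: the use of Proposition \ref{P2} as the localization identity, the affine test function $\eta$ supported near a single vortex, the passage $\langle J(u^{\varepsilon_n}),\eta\rangle\to\pi d_1\langle\mathbf A,b_1\rangle$ via \eqref{Jconv}, and the conformal-coordinate/boundary-integral computation showing that $\int_{\mathcal M}\langle\nabla_g u^*,\nabla_{g_{\nabla_g u^*}}\nabla_g^\perp\eta\rangle_g\,dv_g$ evaluates to $\langle(\nabla_g^\perp)_{b_1}W,\mathbf A\rangle+O(r)$ (this is Step 2 of the paper, and your orientation bookkeeping with the extra $\perp$ is the right picture). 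However, the overall architecture of a \emph{direct} limit argument modeled on Theorem \ref{Thm2} cannot close, and you have in fact put your finger on exactly why without resolving it. Lemma \ref{limsup} controls $\|j(u^\varepsilon)/|u^\varepsilon|-j(H)\|_{L^2}^2$ on annuli only up to a constant times the excess energy $\gamma$ measured relative to $n\pi|\ln\varepsilon|+W(\mathbf b(t),\mathbf d)$ at the \emph{current} vortex positions. Conservation of $E_\varepsilon$ together with \eqref{aem} gives $\gamma = W(\beta,\mathbf d)-W(\mathbf b(t),\mathbf d)+o(1)$, and there is no a priori reason this is $o(1)$: the standard lower bound only yields $W(\mathbf b(t),\mathbf d)\le W(\beta,\mathbf d)+o(1)$, i.e.\ $\gamma\ge -o(1)$, not $\gamma\le o(1)$. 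Knowing that $W$ is (nearly) conserved along the actual trajectories $\mathbf b(t)$ is essentially equivalent to the motion law itself, so your proposed justification is circular.

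The paper breaks this circularity with a comparison-flow/Gronwall argument that is absent from your proposal. One introduces the exact solution $\tilde{\mathbf b}(t)$ of the limiting ODE \eqref{thmode2}, sets $\zeta(t)=\sum_i|d_ib_i(t)-d_i\tilde b_i(t)|$, and uses the fact that $W$ is exactly conserved along the Hamiltonian flow $\tilde{\mathbf b}$ together with Taylor's theorem to bound the excess by $\gamma\le C\zeta(t)+o(1)$. Lemma \ref{limsup} then gives $\bigl|d_i\tfrac{db_i}{dt}+\tfrac1\pi(\nabla_g^\perp)_{b_i}W(\mathbf b,\mathbf d)\bigr|\le C\zeta(t)$ (the quadratic difference of currents contributes $C\zeta$ in $L^1$, while the cross terms vanish by the weak convergence $j(u^\varepsilon)/|u^\varepsilon|\rightharpoonup j(u^*)$), whence $\dot\zeta\le C\zeta$ with $\zeta(0)=0$ and so $\zeta\equiv0$. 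Without this bootstrap your limit passage in the quadratic term is unjustified, and this is a genuine gap rather than a computational detail.
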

\begin{proof}
\begin{flushleft}
    \bf{-Step 1:}
\end{flushleft}
Let the functions \{$\tilde{b}_i(t)\}$ solve the system of ODE's \eqref{thmode2} let $\tilde{T}_1$ be the time of existence so that
\[
        \tilde{T}_1 := \inf \{ t>0: \min_{i \neq j} \{|\tilde{b}_i(t) - \tilde{b}_j(t)|\}=0 \}.
\]
Define the function $\zeta$ by
\[
    \zeta(t) = \sum_{i=1}^n |d_i b_i(t) - d_i \tilde{b}_i(t)|
\]
for $t \in [0,T_*)$ where $T_*:= \min\{T_1, \tilde{T}_1\}$ and $T_1$ is given by \eqref{T1defn}.
Fix $\sigma >0$. Since $\zeta(0) = 0$, there exists $0< t_{\sigma} < T_*$ such that $\zeta(t) \leq \sigma$ for $t \in [0,t_{\sigma}]$.
Since the functions $b_i(t)$ are Lipschitz continuous, they are differentiable for almost every $t$. Hence, for a.e. $t \in [0,t_{\sigma}]$, we have
\begin{align} \label{dzetadt}
    \frac{d \zeta}{dt} & \leq \sum_{i=1}^n |d_i \frac{db_i}{dt} - d_i \frac{d\tilde{b}_i}{dt}| \notag \\
        & \leq \sum_{i=1}^n |d_i \frac{db_i}{dt} + \frac{1}{\pi} (\nabla_g^{\perp})_{b_i} W(\mathbf{b}, \mathbf{d})| + \frac{1}{\pi} \sum_{i=1}^n | (\nabla_g^{\perp})_{b_i} W(\mathbf{b}, \mathbf{d}) - (\nabla_g^{\perp})_{\tilde{b}_i} W(\tilde{\mathbf{b}}, \mathbf{d})| \notag \\
        & \leq  \sum_{i=1}^n |d_i \frac{db_i}{dt} + \frac{1}{\pi} (\nabla_g^{\perp})_{b_i} W(\mathbf{b}, \mathbf{d})| + C\zeta.
\end{align}
Here we have applied Taylor's Theorem to $\nabla_g^{\perp} W$, valid for $\sigma$ sufficiently small.

Now, fix $t \in [0, t_{\sigma}]$ to be any point of differentiability of $b_i(t)$. Without loss of generality, we may set $i=1$ and assume that $b_1(t)=0$. Pick $\mathbf A \in {\mathbb R}^2$ such that
\begin{equation} \label{term2A}
    |d_1 \frac{db_1}{dt} + \frac{1}{\pi} (\nabla_g^{\perp})_{b_1} W(\mathbf{b}, \mathbf{d})| = \langle  d_1 \frac{db_1}{dt} + \frac{1}{\pi} (\nabla_g^{\perp})_{b_1} W(\mathbf{b}, \mathbf{d}) , \mathbf A \rangle.
\end{equation}
Fix $\rho >0$ and let $\eta$ be a smooth function compactly supported in $B_{2\rho}$ such that $\eta = \langle \mathbf A, x \rangle$ in $B_{\rho}$. Then applying Proposition \ref{P2} and \eqref{Jconv} we deduce
\begin{align} \label{RHS1}
    \langle d_1 \frac{db_1}{dt}(t) , \mathbf A \rangle & = \langle \lim_{\delta \rightarrow 0} \frac{d_1}{\delta} [b_1(t+\delta) - b_1(t)] , \mathbf{A} \rangle \notag \\
    & = - \lim_{\delta \rightarrow 0} \lim_{\varepsilon \rightarrow 0} \frac{1}{\delta \pi} \int_t^{t+ \delta} \int_{\mathcal M} \langle \nabla_g u^\varepsilon, \nabla_{g_{\nabla_g u^\varepsilon}} \nabla^{\perp}_g \eta \rangle_g dv_g d\tau.
\end{align}

\begin{flushleft}
    \bf{-Step 2:}
\end{flushleft}
Let $H$ be given in Theorem \ref{Thm21}. Projecting $H$ onto the plane, we may write $H=u^*:=\mathbf n(\Theta)$, where $\Theta = \sum_{i=1}^n d_i \theta_i$, and $\theta_i(t) = \theta(x-b_i(t))$. We will show that
\begin{equation} \label{lemma32}
    \langle (\nabla_g^{\perp})_{b_1} W(\mathbf{b}, \mathbf{d}) , \mathbf A \rangle = \int_{\mathcal M} \langle \nabla_g u^*, \nabla_{g_{\nabla_g u^*}} \nabla^{\perp}_g \eta \rangle_g dv_g.
\end{equation}
Note that for any smooth $\eta : {\mathcal M} \rightarrow \mathbb R$ and $u : {\mathcal M} \rightarrow {\mathbb R}^2$ in $H^1$, using the metric \eqref{metric1} we calculate
\begin{equation} \label{qs2}
- \langle \nabla_g u, \nabla_{g_{\nabla_g u}} \nabla^{\perp}_g \eta \rangle_g = e^{-4f} (q_1 + 2q_2 -q_3)
\end{equation}
where
\begin{equation} \label{q21}
    q_1 = \langle \nabla u, (D^2 \eta) \mathbb{J} \nabla u \rangle \mbox{ with }
    {\mathbb J} =  \left(\begin{array}{ll}
                 \; 0 & 1 \\
               -1 & 0
                \end{array} \right),
\end{equation}
\begin{equation} \label{q22}
    q_2 = \langle \nabla u, \nabla f \rangle \cdot \langle \nabla u, \nabla^{\perp} \eta \rangle,
\end{equation}
and
\begin{equation} \label{q23}
    q_3 = |\nabla u|^2 \langle \nabla f, \nabla^{\perp} \eta \rangle.
\end{equation}
Taking $0< r< \rho$, and combining \eqref{qs2}-\eqref{q23} we derive
\begin{align} \label{315}
    -\int_{\mathcal M} & \langle \nabla_g u^*, \nabla_{g_{\nabla_g u^*}} \nabla^{\perp}_g \eta \rangle_g dv_g \notag \\
    = & \int_{B_{2r} \setminus B_{r}} e^{-2f} \langle \nabla u^*, (\mbox{D}^2 \eta) {\mathbb J} \nabla u^* \rangle dx \notag \\
    & + \int_{B_{2r} \setminus B_{r}} e^{-2f} \left[ 2 \langle \nabla u^*, \nabla f \rangle \cdot \langle \nabla u^*, \nabla^\perp \eta \rangle - |\nabla u^*|^2 \langle \nabla f, \nabla^\perp \eta \rangle  \right] dx.
\end{align}
Integrating by parts and using the fact that $u^*$ is harmonic in $B_{2r} \setminus B_{r}$ we obtain
\begin{align}
     \int_ {B_{2r} \setminus B_{r}}&  e^{-2f} \langle \nabla u^*, (\mbox{D}^2 \eta) {\mathbb J} \nabla u^* \rangle dx \notag \\
    =& \int_{B_{2r} \setminus B_{r}} e^{-2f} \left[ (|u^*_{x_2}|^2 - |u^*_{x_1}|^2)\eta_{x_1 x_2} + (u^*_{x_1} \cdot u^*_{x_2})(\eta_{x_1 x_1} - \eta_{x_2 x_2})\right] dx \notag \\
    =& \frac{1}{2} \int_{B_{2r} \setminus B_{r}} e^{-2f} \left[ (|u^*_{x_1}|^2 + |u^*_{x_2}|^2)_{x_1} \eta_{x_2} - (|u^*_{x_1}|^2 + |u^*_{x_2}|^2)_{x_2} \eta_{x_1} \right] dx \notag \\
    & + 2 \int_{B_{2r} \setminus B_{r}} e^{-2f} \left[ |u^*_{x_2}|^2 f_{x_2} \eta_{x_1} - |u^*_{x_1}|^2 f_{x_1} \eta_{x_2} + (u^*_{x_1} \cdot u^*_{x_2})(f_{x_1} \eta_{x_1} - f_{x_2} \eta_{x_2}) \right] dx \notag \\
    & + \int_{\partial B_{r}} e^{-2f} \left [ |u^*_{x_1}|^2 \eta_{x_2} n_1 - |u^*_{x_2}|^2 \eta_{x_1} n_2 + (u^*_{x_1} \cdot u^*_{x_2})(\eta_{x_2} n_2 - \eta_{x_1} n_1) \right] ds. \notag
\end{align}
Next, integrating by parts again for the first integral, we have
\begin{align} \label{316}
    \int_ {B_{2r} \setminus B_{r}}&  e^{-2f} \langle \nabla u^*, (\mbox{D}^2 \eta) {\mathbb J} \nabla u^* \rangle dx \notag \\
    = & \int_{B_{2r} \setminus B_{r}} e^{-2f} \left[ (|u^*_{x_2}|^2 - |u^*_{x_1}|^2)(f_{x_2} \eta_{x_1} + f_{x_1} \eta_{x_2}) +2(u^*_{x_1} \cdot u^*_{x_2})(f_{x_1} \eta_{x_1} - f_{x_2} \eta_{x_2}) \right] dx  \notag \\
    & - \int_{\partial B_{r}} e^{-2f} \left[ \frac{1}{2} (|u^*_{x_2}|^2 - |u^*_{x_1}|^2)(\eta_{x_1} n_2 + \eta_{x_2} n_1) + (u^*_{x_1} \cdot u^*_{x_2})(\eta_{x_1} n_1 - \eta_{x_2} n_2) \right] ds \notag \\
    =&  \int_{B_{2r} \setminus B_{r}} e^{-2f} \left[ |\nabla u^*|^2 \langle \nabla f, \nabla^\perp \eta \rangle -2 \langle \nabla u^*, \nabla f \rangle \cdot \langle \nabla u^*, \nabla^\perp \eta \rangle \right] dx \notag \\
    & - \int_{\partial B_{r}} e^{-2f} \left[ \frac{1}{2} |\nabla u^*|^2 \langle \nabla^\perp \eta, \mathbf n \rangle - \langle \nabla u^*, \nabla^\perp \eta \rangle \cdot \langle \nabla u^*, \mathbf n \rangle \right] ds.
\end{align}
Thus from \eqref{315} and \eqref{316}, we have
\begin{align}
    \int_{\mathcal M} & \langle \nabla_g u^*, \nabla_{g_{\nabla_g u^*}} \nabla^{\perp}_g \eta \rangle_g dv_g  \notag \\
    =& \int_{\partial B_{r}} e^{-2f} \left[ \frac{1}{2} |\nabla u^*|^2 \langle \nabla^\perp \eta, \mathbf n \rangle - \langle \nabla u^*, \nabla^\perp \eta \rangle \cdot \langle \nabla u^*, \mathbf n \rangle \right] ds.
\end{align}
Note that $\nabla^\perp \eta = \mathbf A^\perp$ at $x=0$ and that $(\mathbf A^\perp)^\perp = -\mathbf A$. From the type of argument used for calculating \eqref{12}, we obtain
\begin{align}
   \int_{\mathcal M} & \langle \nabla_g u^*, \nabla_{g_{\nabla_g u^*}} \nabla^{\perp}_g \eta \rangle_g dv_g  \notag \\
        & = \langle e^{-2f(b_1(t))} \pi [ \nabla f(b_1(t)) -2 d_1 \mathbf C^\perp(b_1(t),t) ]^\perp, \mathbf A \rangle + O(r) \notag \\
        & = \langle (\nabla_g^{\perp})_{b_1} W(\mathbf{b}, \mathbf{d}) , \mathbf A \rangle + O(r).
\end{align}
Since $r$ can be taken arbitrarily small, we have proved \eqref{lemma32}. Using it we derive
\begin{align} \label{RHS2}
    \langle \frac{1}{\pi} (\nabla_g^{\perp})_{b_1} W(\mathbf{b}(t), \mathbf{d}) , \mathbf A \rangle & = \lim_{\delta \rightarrow 0} \frac{1}{\delta \pi} \int_t^{t+\delta} \langle (\nabla_g^{\perp})_{b_1} W(\mathbf{b}(\tau), \mathbf{d}) , \mathbf A \rangle d\tau \notag \\
    & = \lim_{\delta \rightarrow 0} \frac{1}{\delta \pi} \int_t^{t+\delta} \int_{\mathcal M} \langle \nabla_g u^*, \nabla_{g_{\nabla_g u^*}} \nabla^{\perp}_g \eta \rangle_g dv_g.
\end{align}

\begin{flushleft}
    \bf{-Step 3:}
\end{flushleft}
From \eqref{term2A}, \eqref{RHS1} and \eqref{RHS2},
\begin{align} \label{bddW}
    |d_1 & \frac{db_1}{dt} + \frac{1}{\pi} (\nabla_g^{\perp})_{b_1} W(\mathbf{b}, \mathbf{d})| \notag \\
       = & - \lim_{\delta \rightarrow 0} \lim_{\varepsilon \rightarrow 0} \frac{1}{\delta \pi} \int_t^{t+ \delta} \int_{\mathcal M} \langle \nabla_g u^\varepsilon, \nabla_{g_{\nabla_g u^\varepsilon}} \nabla^{\perp}_g \eta \rangle_g  - \langle \nabla_g u^*, \nabla_{g_{\nabla_g u^*}} \nabla^{\perp}_g \eta \rangle_g dv_g d\tau.
\end{align}
Applying \eqref{qs2}-\eqref{q23} again we find
\begin{align} \label{bddW2}
    \int_{\mathcal M} & \langle \nabla_g u^\varepsilon, \nabla_{g_{\nabla_g u^\varepsilon}} \nabla^{\perp}_g \eta \rangle_g  - \langle \nabla_g u^*, \nabla_{g_{\nabla_g u^*}} \nabla^{\perp}_g \eta \rangle_g dv_g \notag \\
    = & \int_ {B_{2\rho} \setminus B_{\rho}} e^{-4f} [ \langle \nabla u^\varepsilon, (D^2 \eta){\mathbb J} \nabla u^\varepsilon \rangle - \langle \nabla u^*, (D^2 \eta){\mathbb J} \nabla u^* \rangle ] dx \notag \\
        & +  2 \int_ {B_{2\rho} \setminus B_{\rho}} e^{-4f} [ \langle \nabla u^\varepsilon, \nabla f \rangle \cdot \langle \nabla u^\varepsilon, \nabla^\perp \eta \rangle - \langle \nabla u^*, \nabla f \rangle \cdot \langle \nabla u^*, \nabla^\perp \eta \rangle] dx \notag \\
        & -  \int_ {B_{2\rho} \setminus B_{\rho}} e^{-4f} (|\nabla u^\varepsilon|^2 - |\nabla u^*|^2) \langle \nabla f,  \nabla \eta \rangle dx.
\end{align}
Note that in \eqref{bddW2}, each term in the integral is of the form  $F u^\varepsilon_{x_k}  u^\varepsilon_{x_l}$, where $F$ is a smooth function and $k, l \in \{1,2\}$. To control them, we first calculate for $k = 1,2$,
\[
    u^\varepsilon_{x_k} = \frac{j^k(u^\varepsilon)}{|u^\varepsilon|} \frac{iu^\varepsilon}{|u^\varepsilon|} + |u^\varepsilon|_{x_k} \frac{u^\varepsilon}{|u^\varepsilon|},
\]
where we write $j(u^\varepsilon) = (j^1(u^\varepsilon), j^2(u^\varepsilon))$. Thus for $k,l \in \{1,2\}$,
\begin{align}
    \frac{j^k(u^\varepsilon) j^l(u^\varepsilon)}{|u^\varepsilon|^2} & - j^k(u^*)j^l(u^*) \notag \\
        = & \left[\frac{j^k(u^\varepsilon)}{|u^\varepsilon|} -  j^k(u^*) \right] \left[ \frac{j^l(u^\varepsilon)}{|u^\varepsilon|} -  j^l(u^*) \right] \notag \\
        & + j^k(u^*)\left[ \frac{j^l(u^\varepsilon)}{|u^\varepsilon|} -  j^l(u^*) \right] + j^l(u^*)\left[ \frac{j^k(u^\varepsilon)}{|u^\varepsilon|} -  j^k(u^*) \right].
\end{align}
Since $\frac{j(u^\varepsilon)}{|u^\varepsilon|}$ is uniformly bounded in $L^2({\mathcal M} \setminus \bigcup_{i=1}^n B^g_\rho(a_i) \times [t,t + \delta])$, up to a subsequence it converges to some weak limit $\tilde{j}$ in $L^2$. Then from \eqref{convabs} and \eqref{jconv}, we see that
\[
    \tilde{j} = j(u^*),
\]
i.e.
\begin{equation} \label{weaklimj}
    \frac{j(u^\varepsilon)}{|u^\varepsilon|} \rightharpoonup j(u^*) \mbox{ in } L^2({\mathcal M} \setminus \bigcup_{i=1}^n B^g_\rho(a_i) \times [t,t + \delta]).
\end{equation}
Now the almost minimizing energy assumption \eqref{aem} and the conservation of energy $E_\varepsilon$ along the flow \eqref{GP} imply
\begin{align}
    E_{\varepsilon}(u^\varepsilon (\cdot, t)) & = E_{\varepsilon}(u^\varepsilon_0) \notag \\
        & \leq n \pi |\ln \varepsilon| + W(\mathbf \beta ,\mathbf d) + o(1) \notag \\
        & = n \pi  |\ln \varepsilon|  + W(\tilde{\mathbf b}(t) ,\mathbf d) + o(1) \notag \\
        & \leq  n \pi  |\ln \varepsilon|  + W(\mathbf b(t) ,\mathbf d) + C\zeta(t) + o(1).
\end{align}
Here we also used the fact that the renormalized energy $W$ is conserved by the flow $\tilde{\mathbf b}(t)$. Then by \eqref{Jconv} and Lemma \ref{limsup}, we have
\begin{equation} \label{bddj}
    {\lim \sup}_{\varepsilon \rightarrow 0} ||\frac{j(u^\varepsilon)}{|u^\varepsilon|} - j(u^*)||^2_{L^2({\mathcal M} \setminus \bigcup_{i=1}^n B^g_\rho(a_i))} \leq C \zeta(t),
\end{equation}
and
\begin{equation} \label{bdd2}
    {\lim \sup}_{\varepsilon \rightarrow 0} ||\nabla_g |u^\varepsilon|||^2_{L^2({\mathcal M} \setminus \bigcup_{i=1}^n B^g_\rho(a_i))} \leq C \zeta(t).
\end{equation}
Using \eqref{weaklimj} and passing limit $\varepsilon \rightarrow 0$ in \eqref{bddW} we obtain
\[
    |d_1 \frac{db_1}{dt} + \frac{1}{\pi} (\nabla_g^{\perp})_{b_1} W(\mathbf{b}, \mathbf{d})| \leq C \zeta(t).
\]
Since such an estimate holds for each vortex path we combining this with \eqref{dzetadt} to see that for $t \in [0, t_\sigma]$, one has
\[
    \frac{d\zeta}{dt} \leq C \zeta(t)
\]
with $\zeta(0) = 0$. Hence, we have $\zeta \equiv 0$, which implies \eqref{thmode2}.
\end{proof}

\section{Gradient Flow of Point Vortices on ${S}^2$}
In this section we will discuss the limiting vortex motion on ${S}^2 \subset {\mathbb{R}}^3$ given by the system of ODE's \eqref{thmode}. Let $\{ \mathbf{P}_i\}_{i=1}^{2n}$ be $2n$ vortices on ${S}^2$ with degrees $\{ d_i \}_{i=1}^{2n}$ such that $\sum_{i=1}^{2n} d_i = 0$. Here we restrict attention to vortices of initial degrees $\pm 1$. Note that for ${S}^2$, we may write the renormalized energy W as
\begin{equation}
    W = - \pi \sum_{i \neq j} d_i d_j \ln|\mathbf P_i - \mathbf P_j|.\label{rne}
\end{equation}
Defining $T_1 := \inf\{ t>0: \min_{i \neq j} \{|\mathbf{P}_i - \mathbf{P}_j|\} =0 \}$, the vortices $\{ \mathbf{P}_i\}_{i=1}^{2n}$ satisfy the gradient flow:
\begin{equation} \label{odeS2}
        \frac{d \mathbf P_i}{dt} = (\nabla_{{S}^2})_{\mathbf P_i}  d_i \sum_{j \neq i}d_j \ln|\mathbf P_i - \mathbf P_j|_{{\mathbb R}^3} \quad \mbox{for } t \in (0,T_0).
\end{equation}
Now let $\mathbf{p}_i \in {\mathbb{R}}^2$ be the image of $\mathbf{P}_i$ via stereographic projection such that coordinates of $\mathbf{P}_i$ in ${\mathbb{R}}^3$ are given by
\begin{equation}\label{coord}
        \mathbf{P}_i = (\frac{2\mathbf{p}_i}{1+r_i^2},\frac{r_i^2-1}{1+r_i^2}),
\end{equation}
where $r_i^2 = |\mathbf p_i|^2$. In this case the conformal factor $e^{2f(x)} = \frac{4}{(1+|x|^2)^2}$, i.e.
\[
    f(x) = \ln (\frac{2}{1+|x|^2}).
\]
Thus \eqref{thmode} can be written as the following system of ODE's on the plane:
\begin{equation} \label{ode}
        \frac{d \mathbf{p}_i}{dt} \equiv \dot{\mathbf{p}_i} = \frac{(1+r_i^2)^2}{2}
                                           \left( \frac{ \mathbf{p}_i}{1+r_i^2}
                                           + d_i \sum_{j \neq i} d_j \frac{\mathbf{p}_i - \mathbf{p}_j}{|\mathbf{p}_i - \mathbf{p}_j|^2} \right).
\end{equation}
We first establish a useful identity for the quantity $\sum_{i=1}^{2n} \mathbf{P}_i$.
\begin{prop} \label{ExpD}
    Let $\mathbf{V}_0 = \sum_{i=1}^{2n} \mathbf{P}_i$, then $\dot{\mathbf{V}}_0 = \mathbf{V}_0$ in $(0, T_1)$.
\end{prop}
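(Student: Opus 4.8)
The plan is to carry out the computation in the ambient space $\mathbb{R}^3$, exploiting the fact that $S^2$ is the unit sphere, so that the outward unit normal at a point $\mathbf{P}_i$ is $\mathbf{P}_i$ itself and the intrinsic gradient on $S^2$ is obtained from the ambient Euclidean gradient by removing its radial component. Concretely, I would read the flow as the gradient flow of $-W/\pi$ (cf. \eqref{thmode}, \eqref{rne}). Differentiating the symmetric double sum in \eqref{rne} with respect to $\mathbf{P}_i$ --- noting that $\mathbf{P}_i$ occurs in both the $(i,j)$ and $(j,i)$ terms, whence a factor of two --- and using $\nabla_{\mathbb{R}^3}\ln|\mathbf{P}-\mathbf{P}_j| = (\mathbf{P}-\mathbf{P}_j)/|\mathbf{P}-\mathbf{P}_j|^2$, I set
\[
 \mathbf{G}_i := 2 d_i \sum_{j\neq i} d_j \frac{\mathbf{P}_i-\mathbf{P}_j}{|\mathbf{P}_i-\mathbf{P}_j|^2},
\]
so that \eqref{odeS2} reads $\dot{\mathbf{P}}_i = \mathbf{G}_i - \langle \mathbf{G}_i, \mathbf{P}_i\rangle \mathbf{P}_i$. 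Summing over $i$ then splits $\dot{\mathbf{V}}_0$ into a full-gradient part $\sum_i \mathbf{G}_i$ and a radial correction $-\sum_i \langle \mathbf{G}_i,\mathbf{P}_i\rangle \mathbf{P}_i$.

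The first key point is that $\sum_i \mathbf{G}_i = 0$ by antisymmetry: it is a sum over ordered pairs $i\neq j$ whose summand $2 d_i d_j (\mathbf{P}_i-\mathbf{P}_j)/|\mathbf{P}_i-\mathbf{P}_j|^2$ is odd under the interchange $i\leftrightarrow j$, so the contributions of $(i,j)$ and $(j,i)$ cancel. This reflects that the pairwise logarithmic interaction produces no net ambient displacement, and it reduces the claim to the radial terms alone.

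The second, genuinely geometric, step is to evaluate $\langle \mathbf{G}_i, \mathbf{P}_i\rangle$. Using the unit-sphere identities $|\mathbf{P}_i|^2 = 1$ and $|\mathbf{P}_i - \mathbf{P}_j|^2 = 2(1 - \langle \mathbf{P}_i,\mathbf{P}_j\rangle)$, the ratio $\langle \mathbf{P}_i - \mathbf{P}_j, \mathbf{P}_i\rangle / |\mathbf{P}_i - \mathbf{P}_j|^2$ collapses to the constant $1/2$, independent of the configuration. Hence $\langle \mathbf{G}_i, \mathbf{P}_i\rangle = d_i \sum_{j\neq i} d_j$, and the degree hypotheses $\sum_{j=1}^{2n} d_j = 0$ (so $\sum_{j\neq i} d_j = -d_i$) together with $d_i^2 = 1$ give $\langle \mathbf{G}_i, \mathbf{P}_i\rangle = -1$. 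Substituting into $\dot{\mathbf{V}}_0 = -\sum_i \langle \mathbf{G}_i,\mathbf{P}_i\rangle \mathbf{P}_i$ then yields $\dot{\mathbf{V}}_0 = \sum_i \mathbf{P}_i = \mathbf{V}_0$.

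I expect no serious obstacle here; the difficulty is purely bookkeeping. The two things to watch are correctly passing between the intrinsic and ambient gradients (remembering to subtract the radial part) and keeping the normalization straight, in particular the factor of two coming from the symmetric double sum in \eqref{rne}. As an independent check, one can instead differentiate $\mathbf{P}_i = (2\mathbf{p}_i/(1+r_i^2),\, (r_i^2-1)/(1+r_i^2))$ along the planar flow \eqref{ode} in the stereographic coordinates \eqref{coord}; the same cancellation reappears through the pairwise identity $\sum_i \mathbf{p}_i \cdot \mathbf{w}_i = \sum_{i<j} d_i d_j = -n$, where $\mathbf{w}_i := d_i \sum_{j\neq i} d_j (\mathbf{p}_i - \mathbf{p}_j)/|\mathbf{p}_i-\mathbf{p}_j|^2$, but the ambient $\mathbb{R}^3$ computation is considerably cleaner.
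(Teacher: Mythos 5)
Your proof is correct, but it takes a genuinely different route from the paper's. The paper works entirely in stereographic coordinates: it differentiates the parametrization \eqref{coord} along the planar flow \eqref{ode} and verifies by direct computation that the interaction contributions, organized into quantities $\mathbf{A}_i$ and $B_i$, each sum to zero over $i$, leaving exactly $\mathbf{V}_0$. You instead stay in the ambient $\mathbb{R}^3$ and split $\dot{\mathbf{P}}_i$ into the full Euclidean gradient $\mathbf{G}_i$ and its radial correction; the two key facts --- that $\sum_i \mathbf{G}_i = 0$ by antisymmetry of the pairwise interaction, and that $\langle \mathbf{P}_i - \mathbf{P}_j, \mathbf{P}_i\rangle / |\mathbf{P}_i - \mathbf{P}_j|^2 \equiv 1/2$ on the unit sphere so that $\langle \mathbf{G}_i, \mathbf{P}_i\rangle = d_i\sum_{j\neq i} d_j = -d_i^2 = -1$ --- make the cancellation structural rather than computational, and in my view more transparent: the identity $\dot{\mathbf{V}}_0 = \mathbf{V}_0$ is visibly the sum of the (constant) radial defects of the logarithmic interaction on the sphere. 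One point you handled correctly that deserves emphasis: the normalization. The display \eqref{odeS2} as written omits the factor of $2$ that arises because $\mathbf{P}_i$ appears in both the $(i,j)$ and $(j,i)$ terms of the ordered double sum in \eqref{rne}; your $\mathbf{G}_i$, carrying that factor, is the normalization consistent with \eqref{thmode}, with the planar system \eqref{ode} (which is what the paper's own proof actually uses), and with the conclusion $\dot{\mathbf{V}}_0 = \mathbf{V}_0$ rather than $\tfrac12 \mathbf{V}_0$. Your sanity check via $\sum_i \mathbf{p}_i\cdot\mathbf{w}_i = \sum_{i<j} d_i d_j = -n$ is also correct and is essentially the cancellation the paper carries out term by term.
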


\begin{proof}
Taking the derivative using \eqref{coord} we find
\begin{equation} \label{PDot}
        \dot{\mathbf{V}}_0 = (\sum_{i=1}^{2n} \left[ \frac{2}{1+r_i^2} \dot{\mathbf{p}_i} - \frac{4\mathbf{p}_i \cdot \dot{\mathbf{p}_i}}{(1+r_i^2)^2} \mathbf{p}_i \right] , \sum_{i=1}^{2n} \frac{4\mathbf{p}_i \cdot \dot{\mathbf{p}_i}}{(1+r_i^2)^2}).
\end{equation}
Then using \eqref{ode} we obtain
\begin{align} \label{Pxy}
        \frac{2}{1+ r_i^2} \dot{\mathbf{p}_i} & - \frac{4\mathbf{p}_i \cdot \dot{\mathbf{p}_i}}{(1+r_i^2)^2} \mathbf{p}_i \notag \\
                                             = \mathbf{p} _i & + d_i (1+r_i^2) \sum_{j \neq i} d_j \frac{\mathbf{p}_i - \mathbf{p}_j}{|\mathbf{p}_i - \mathbf{p}_j|^2} - \mathbf{p}_i \left( \frac{2r_i^2}{1+r_i^2} + 2d_i \sum_{j \neq i} d_j \frac{r_i^2 - \mathbf{p}_i \cdot \mathbf{p}_j}{|\mathbf{p}_i - \mathbf{p}_j|^2}\right) \notag \\
                                             = \mathbf{p}_i & \left( 1 - \frac{2r_i^2}{1+r_i^2} - d_i \sum_{j \neq i} d_j \frac{r_i^2 - 2\mathbf{p}_i \cdot \mathbf{p}_j}{|\mathbf{p}_i - \mathbf{p}_j|^2} \right) - \sum_{j \neq i} \frac{d_i d_j  r_i^2 \mathbf{p}_j}{|\mathbf{p}_i - \mathbf{p}_j|^2}  \notag \\
                                                                       &- \sum_{i \neq j}  d_i d_j \frac{\mathbf{p}_i - \mathbf{p}_j}{|\mathbf{p}_i - \mathbf{p}_j|^2}
\end{align}
Since $r_i^2 - 2\mathbf{p}_i \cdot \mathbf{p}_j = |\mathbf{p}_i - \mathbf{p}_j|^2 - r_j^2$ and $d_i \sum_{j \neq i} d_j = -1$, we have
\begin{align} \label{Pxy}
        \frac{2}{1+ r_i^2} & \dot{\mathbf{p}_i} - \frac{4\mathbf{p}_i \cdot \dot{\mathbf{p}_i}}{(1+r_i^2)^2} \mathbf{p}_i \notag \\
                                             & = \mathbf{p}_i \left( 2 - \frac{2r_i^2}{1+r_i^2} \right) - \sum_{j \neq i} \left[ \frac{d_i d_j }{|\mathbf{p}_i - \mathbf{p}_j|^2} (r_i^2 \mathbf{p}_j - r_j^2 \mathbf{p}_i) + d_i d_j \frac{\mathbf{p}_i - \mathbf{p}_j}{|\mathbf{p}_i - \mathbf{p}_j|^2} \right] \notag \\
                                             & =: \frac{2\mathbf{p}_i}{1+r_i^2} - \mathbf{A}_i.
\end{align}
Similarly,
\begin{equation} \label{Pz}
        \frac{4\mathbf{p}_i \cdot \dot{\mathbf{p}_i}}{(1+r_i^2)^2} = \frac{r_i^2-1}{1+r_i^2} + B_i,
\end{equation}
where
\[
        B_i = \sum_{j \neq i} d_i d_j \frac{r_i^2 - r_j^2}{|\mathbf{p}_i - \mathbf{p}_j|^2}.
\]
Note that $\sum_{i=1}^{2n} \mathbf{A}_i = \mathbf{0}$ and $\sum_{i=1}^{2n} B_i =0$. Combining \eqref{PDot}, \eqref{Pxy} and \eqref{Pz}, we derive
\[
        \dot{\mathbf{V}}_0 = \mathbf{V}_0.
\]
\end{proof}

\begin{cor} \label{centerofmass}
If ${\mathbf{V}}_0(0)\not=0$, then
\[
        \frac{d}{dt} \frac{\mathbf{V}_0}{|\mathbf{V}_0|} = 0,
\]
i.e. the projection of $\mathbf V_0$ onto ${S}^2$ is fixed for $t \in [0, T_1)$. We may view this as conservation of the center of mass.
Furthermore, either $\mathbf V_0 (t) \equiv 0$ or else $T_1 < \infty$. Lastly,$\mathbf V_0=0$ is a necessary condition for an equilibrium solution
to \eqref{odeS2}.
 \end{cor}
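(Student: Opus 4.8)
The plan is to exploit the fact that Proposition \ref{ExpD} reduces everything to the scalar-type linear ODE $\dot{\mathbf{V}}_0 = \mathbf{V}_0$ on $[0,T_1)$, whose unique solution is $\mathbf{V}_0(t) = e^t \mathbf{V}_0(0)$. Each assertion in the corollary then follows from this explicit form, combined with the elementary bound $|\mathbf{V}_0(t)| \le 2n$, which holds because $\mathbf{V}_0 = \sum_{i=1}^{2n} \mathbf{P}_i$ is a sum of $2n$ unit vectors in ${S}^2 \subset \mathbb{R}^3$.

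For the first assertion I would differentiate the normalization directly rather than invoke the explicit solution. Using Proposition \ref{ExpD} one has $\frac{d}{dt}|\mathbf{V}_0| = \frac{\mathbf{V}_0 \cdot \dot{\mathbf{V}}_0}{|\mathbf{V}_0|} = |\mathbf{V}_0|$, so the quotient rule gives
\[
\frac{d}{dt}\frac{\mathbf{V}_0}{|\mathbf{V}_0|} = \frac{\dot{\mathbf{V}}_0}{|\mathbf{V}_0|} - \frac{\mathbf{V}_0}{|\mathbf{V}_0|^2}\,\frac{d}{dt}|\mathbf{V}_0| = \frac{\mathbf{V}_0}{|\mathbf{V}_0|} - \frac{\mathbf{V}_0}{|\mathbf{V}_0|} = 0 .
\]
This is valid wherever $\mathbf{V}_0 \neq 0$, and when $\mathbf{V}_0(0) \neq 0$ the formula $\mathbf{V}_0(t) = e^t \mathbf{V}_0(0)$ shows $\mathbf{V}_0(t)$ never vanishes on $[0,T_1)$, so the quantity is well-defined throughout. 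Hence the unit vector $\mathbf{V}_0/|\mathbf{V}_0|$, the spherical projection of the center of mass, stays fixed.

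For the dichotomy I would play the exponential growth of $|\mathbf{V}_0|$ against its uniform bound. If $\mathbf{V}_0(0) \neq 0$, then $|\mathbf{V}_0(t)| = e^t |\mathbf{V}_0(0)|$, while the triangle inequality forces $|\mathbf{V}_0(t)| \le 2n$ for every $t \in [0,T_1)$. Thus $e^t \le 2n/|\mathbf{V}_0(0)|$, which can persist only up to $T_1 \le \ln\!\big(2n/|\mathbf{V}_0(0)|\big) < \infty$. Consequently either $\mathbf{V}_0(0) = 0$, whence $\mathbf{V}_0(t) \equiv 0$, or else $T_1 < \infty$. Finally, at an equilibrium of \eqref{odeS2} every $\dot{\mathbf{P}}_i$ vanishes, so $\dot{\mathbf{V}}_0 = 0$; substituting into $\dot{\mathbf{V}}_0 = \mathbf{V}_0$ immediately yields $\mathbf{V}_0 = 0$, establishing the stated necessary condition.

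I do not anticipate a serious obstacle: the entire analytic content resides in Proposition \ref{ExpD}, and the corollary is a short harvest of consequences of the resulting linear ODE. The only point meriting mild care is verifying that the normalization $\mathbf{V}_0/|\mathbf{V}_0|$ remains well-defined over all of $[0,T_1)$, which is immediate from the nonvanishing of $e^t \mathbf{V}_0(0)$ once $\mathbf{V}_0(0) \neq 0$.
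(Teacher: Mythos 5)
Your proposal is correct and follows exactly the paper's argument: integrate Proposition \ref{ExpD} to get $\mathbf V_0(t)=e^t\mathbf V_0(0)$ and then deduce everything from the bound $|\mathbf V_0(t)|\le 2n$. The paper states this in one line and leaves the harvesting of consequences to the reader; your write-up simply fills in those routine details.
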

\noindent{\it Proof.} By Proposition \ref{ExpD}, we have $\mathbf V_0(t) = \mathbf V_0 (0) e^t$ and since $|\mathbf V_0 (t)| \leq 2n$, all of the conclusions
 follow immediately.
\qed

\begin{prop} \label{Decay}
Let  $\mathbf{V}_0$ be as in Proposition \ref{ExpD}. Then $\sum_{i>j} |\mathbf{P}_i-\mathbf{P}_j|^2$ decays in $[0, T_1)$. Furthermore, the decay is strict if $\mathbf V_0 (0) \neq 0$.
\end{prop}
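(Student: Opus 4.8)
The plan is to reduce the statement to a single scalar identity connecting the sum of squared chordal distances to the length of $\mathbf{V}_0$, exploiting that every vortex lies on the unit sphere. First I would set $Q(t) := \sum_{i>j}|\mathbf{P}_i-\mathbf{P}_j|^2$ and symmetrize, writing $Q = \tfrac12\sum_{i,j}|\mathbf{P}_i-\mathbf{P}_j|^2$ (the diagonal terms vanish). Expanding $|\mathbf{P}_i-\mathbf{P}_j|^2 = |\mathbf{P}_i|^2 - 2\,\mathbf{P}_i\cdot\mathbf{P}_j + |\mathbf{P}_j|^2$ and using $|\mathbf{P}_i|_{{\mathbb R}^3}=1$ for every $i$, the constant terms contribute $2(2n)^2 = 8n^2$ to the double sum, while the cross terms collapse through $\sum_{i,j}\mathbf{P}_i\cdot\mathbf{P}_j = \big|\sum_i\mathbf{P}_i\big|^2 = |\mathbf{V}_0|^2$.

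This produces the clean identity
\[
    Q(t) = 4n^2 - |\mathbf{V}_0(t)|^2,
\]
so the monotonicity of $Q$ is governed entirely by the growth of $|\mathbf{V}_0|^2$. The second ingredient is already available: Proposition \ref{ExpD} (equivalently Corollary \ref{centerofmass}) gives $\mathbf{V}_0(t) = \mathbf{V}_0(0)\,e^{t}$, whence $|\mathbf{V}_0(t)|^2 = |\mathbf{V}_0(0)|^2 e^{2t}$. Substituting and differentiating, I would conclude
\[
    \frac{dQ}{dt} = -2\,|\mathbf{V}_0(0)|^2\, e^{2t} \le 0 \quad\text{on } [0,T_1),
\]
with strict inequality exactly when $\mathbf{V}_0(0)\neq 0$, which is the asserted strict decay.

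I do not expect a serious obstacle here; the only step needing care is the symmetrization bookkeeping together with the unit-sphere constraint that yields $Q = 4n^2 - |\mathbf{V}_0|^2$. Once that identity is in place, the conclusion is an immediate consequence of the exponential behavior of $\mathbf{V}_0$ from Proposition \ref{ExpD}, and no further estimates on the vortex dynamics \eqref{odeS2} are required.
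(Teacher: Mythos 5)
Your proposal is correct and follows essentially the same route as the paper: both use $|\mathbf{P}_i|=1$ to reduce $\sum_{i>j}|\mathbf{P}_i-\mathbf{P}_j|^2$ to an expression in $|\mathbf{V}_0|^2$ and then invoke Proposition \ref{ExpD} to get $\frac{d}{dt}\sum_{i>j}|\mathbf{P}_i-\mathbf{P}_j|^2=-2|\mathbf{V}_0(0)|^2e^{2t}$. The only cosmetic difference is that you record the explicit identity $Q=4n^2-|\mathbf{V}_0|^2$ before differentiating, whereas the paper differentiates first.
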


\begin{proof}
Since $|\mathbf{P}_i| = 1$ for all $i$, we have
\begin{align}
\frac{d}{dt} \sum_{i>j} |\mathbf{P}_i-\mathbf{P}_j|^2 &= -2 \frac{d}{dt} \sum_{i>j} \mathbf{P}_i \cdot \mathbf{P}_j \notag\\
                                                                                          &= - \frac{d}{dt} (\mathbf{V}_0 \cdot \mathbf{V}_0) \notag \\
                                                                                          &= -2 \mathbf{V}_0 \cdot \dot{\mathbf{V}}_0. \notag
\end{align}
From Proposition \ref{ExpD} we then have
\[
\frac{d}{dt} \sum_{i>j} |\mathbf{P}_i-\mathbf{P}_j|^2 = -2|\mathbf{V_0}|^2 = -2|\mathbf V_0 (0)|^2 e^{2t}.
\]
\end{proof}

If $T_1 < \infty$, we know that $\mathbf{P}_{i_*} = \mathbf{P}_{j_*}$ at $t=T_1$ for some $i_* \neq  j_*$. Next we will prove that any collision must involve at least two vortices with different signs. In general, we have
\begin{prop}\label{43}
Assume that at the first collision time $t=T_1 < \infty$, the total degree of the colliding vortices is not zero. Denote the absolute value of this total by $l$. Then the collision cannot involve only vortices having degree of one sign. Furthermore, if the collision involves $k$ vortices of one sign and $k+l$ vortices of the opposite sign with $l \geq 1$, then the following inequality gives an upper bound on $l$:
\[
    C_2^k + C_2^{k+l}- k(k+l) < 1,
\]
where $C^k_n := \frac{k!}{n!(k-n)!}.$
\end{prop}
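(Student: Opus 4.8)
The plan is to reduce both assertions to a single statement about the sign of the quantity
\[
D := \sum_{\substack{i\neq j\\ i,j\in\mathcal C}} d_i d_j ,
\]
where $\mathcal C$ is the set of colliding vortices, and then to establish that sign condition by a moment-of-inertia (virial) identity. First I would record the elementary algebra. If $\mathcal C$ consists of $k$ vortices of one sign and $k+l$ of the opposite sign, then $\sum_{i\in\mathcal C}d_i=\pm l$ and $\sum_{i\in\mathcal C}d_i^2=m$, where $m:=2k+l=|\mathcal C|$; hence
\[
D=\Big(\sum_{i\in\mathcal C}d_i\Big)^2-\sum_{i\in\mathcal C}d_i^2=l^2-m=2\big(C_2^k+C_2^{k+l}-k(k+l)\big),
\]
the last equality being a direct computation, both sides equalling $l(l-1)-2k$. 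Thus for integers the stated inequality $C_2^k+C_2^{k+l}-k(k+l)<1$ is equivalent to $D\le 0$, while the first assertion (a one-sign collision, i.e.\ $k=0$, $l=m\ge 2$) is the case $D=m(m-1)>0$. So it suffices to prove that a collision forces $D\le 0$, and the one-sign statement then follows a fortiori.

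The key tool will be the following virial identity, whose spirit is that of Proposition \ref{Decay} but localized to a fixed point. Let $\mathbf Q\in S^2$ be the common limit of the colliding vortices at $t=T_1$, so that, with $R(t)$ the diameter of $\mathcal C$, the quantity
\[
I(t):=\sum_{i\in\mathcal C}\big|\mathbf P_i(t)-\mathbf Q\big|^2_{\mathbb R^3}
\]
tends to $0$ as $t\to T_1^-$. I would differentiate $I$ along \eqref{odeS2}, writing $\nabla_{S^2}\ln|\mathbf P_i-\mathbf P_j|$ as the tangential projection of $\frac{\mathbf P_i-\mathbf P_j}{|\mathbf P_i-\mathbf P_j|^2}$. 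The normal correction is proportional to $(\mathbf P_i-\mathbf Q)\cdot\mathbf P_i=\tfrac12|\mathbf P_i-\mathbf Q|^2$ (using $|\mathbf P_i|=|\mathbf Q|=1$), which against the within-cluster factor $|\mathbf P_i-\mathbf P_j|^{-1}$ is $O(R)\to 0$. Grouping the remaining within-cluster sum into unordered pairs and invoking
\[
(\mathbf P_i-\mathbf Q)\cdot(\mathbf P_i-\mathbf P_j)+(\mathbf P_j-\mathbf Q)\cdot(\mathbf P_j-\mathbf P_i)=|\mathbf P_i-\mathbf P_j|^2,
\]
each pair contributes exactly $d_id_j$, so these terms sum to $D$. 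Interactions of $\mathcal C$ with the remaining vortices, which stay a fixed distance from $\mathbf Q$ up to $T_1$, come multiplied by $(\mathbf P_i-\mathbf Q)\to 0$ and are $o(1)$. The upshot is
\[
\frac{d}{dt}I(t)=D+o(1)\qquad\text{as } t\to T_1^- .
\]

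With this identity the conclusion is immediate. If $D>0$, then $\frac{d}{dt}I\ge D/2>0$ on some interval $(T_1-\delta,T_1)$, so $I$ is increasing there and cannot tend to $0$, contradicting the collapse of $\mathcal C$ at $T_1$. Hence $D\le 0$, which is precisely the claimed inequality, and no one-sign collision (for which $D>0$) can occur.

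I expect the main obstacle to be the rigorous bookkeeping of the error term $o(1)$: one must verify that the normal/curvature corrections coming from $\nabla_{S^2}$ rather than the flat gradient, the interactions between $\mathcal C$ and the non-colliding vortices, and—when several collisions occur simultaneously at distinct points—the cross-cluster interactions, all vanish relative to the clean constant $D$. Each such term has the form (bounded vector)$\,\cdot(\mathbf P_i-\mathbf Q)$ with $|\mathbf P_i-\mathbf Q|\to0$; the only delicate point is that the within-cluster kernel $\frac{\mathbf P_i-\mathbf P_j}{|\mathbf P_i-\mathbf P_j|^2}$ blows up like $R^{-1}$, but it is always paired against a factor of order $R^2$ (either $|\mathbf P_i-\mathbf Q|^2$ in the normal correction or $|\mathbf P_i-\mathbf P_j|^2$ in the symmetrized leading term), so the products remain controlled. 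A secondary point is to state precisely, when $\mathcal C$ is only part of a larger simultaneously-colliding configuration, that the identity—and hence $D\le0$—applies to each maximal coinciding group separately.
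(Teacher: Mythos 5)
Your proposal is correct and is essentially the paper's argument: both compute the time derivative of the sum of squared distances of the colliding vortices to the collision point, use the symmetrization identity over unordered pairs to extract the leading constant $D=\sum_{i\neq j}d_id_j=2\bigl(C_2^k+C_2^{k+l}-k(k+l)\bigr)$ with all remaining terms $o(1)$, and conclude that $D>0$ would force that quantity to increase near $T_1$, contradicting collapse. The only difference is cosmetic --- the paper carries out the computation in stereographic coordinates (tracking $\sum_{i\in I}r_i^2$ with the collision placed at the origin via \eqref{ode}), while you work with the chordal distance $\sum_i|\mathbf P_i-\mathbf Q|^2$ in $\mathbb R^3$ directly from \eqref{odeS2}.
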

\begin{proof}
Without loss of generality, we assume a collision occurs at the south pole, which in our coordinates on ${\mathbb R}^2$ corresponds to the origin. We will consider the case where the number of colliding vortices with degree $1$ exceeds the number with degree $-1$. The other case is handled similarly.

Thus we let $k$ denote the number of vortices with degree $-1$ and let $k+l$ denote the number  of vortices with degree $1$. We also denote by $I$ the set of indices $i \in \{1,2, ..., 2n\}$ corresponding to vortices involved in the collision at origin.
Using \eqref{ode} we calculate
\[
     \frac{d}{dt} r_i^2 = 2\mathbf p_i \cdot \dot{\mathbf p_i} = (1+r_i^2)^2 \left( \frac{r_i^2}{1+r_i^2} + d_i \sum_{j \neq i} d_j \frac{r_i^2 - \mathbf p_i \cdot \mathbf{p}_j}{|\mathbf{p}_i - \mathbf{p}_j|^2} \right).
\]
By the definition of $I$,
\[
    \lim_{t \rightarrow T_1} \mathbf p_i(t) \neq 0 \quad \mbox{for $i \notin I$.}
\]
Thus for $i \in I$ and $0<T_1 - t \ll 1$,
\[
    \frac{d}{dt} r_i^2 = \sum_{j \in I ; j \neq i} d_i d_j \frac{  r_i^2 -\mathbf{p}_i \cdot \mathbf{p}_j}{|\mathbf{p}_i - \mathbf{p}_j|^2} + o(1).
\]
Therefore,
\[
    \sum_{i \in I} \frac{d}{dt} r_i^2 = \sum_{i \in I} \sum_{j \in I ; j \neq i} d_i d_j \frac{  r_i^2 -\mathbf{p}_i \cdot \mathbf{p}_j}{|\mathbf{p}_i - \mathbf{p}_j|^2} + o(1) = \sum_{i,j \in I; i \neq j} d_i d_j + o(1).
\]
Note that in $I$ there are $k+l$ vortices with degree $1$ and $k$ vortices with degree $-1$. We easily check that
\[
    \sum_{i,j \in I; i \neq j} d_i d_j = 2[C_2^k + C_2^{k+l}- k(k+l)] \geq 2,
\]
where the last inequality follows from contradiction hypothesis:
\begin{equation} \label{Hp}
    C_2^k + C_2^{k+l}- k(k+l) \geq 1,
\end{equation}
and we use the convention that $C^0_2 = C^1_2 = 0$. Note that \eqref{Hp} is satisfied if $k=0$ since in this case $l \geq 2$. But this implies that there exists $i_* \in I$ such that $\frac{d}{dt} r_{i_*}^2 \geq \frac{2}{2k+l}$ for all $t$ sufficiently close to $T_1$, which contradicts the assumption that $\mathbf{p}_i \rightarrow 0$ as $t \rightarrow T_1$ for all $i \in I$.
\end{proof}

When two or more vortices collide at some time $T_0$, the system \eqref{ode} is no longer well-defined. However, we wish to consider a natural continuation of the flow after this collision time by restarting the flow with an appropriate removal or recombination of the colliding vortices. This motivates the definition below. We first make the assumption that if for some $\mathbf P_* \in {\mathbb S}^2$ and some $I \subset \{i \in {\mathbb N}: 1 \leq  i \leq 2n\}$, vortices $\{\mathbf P_i\}_{i \in I}$ collide at $\mathbf P_*$ when $t=T_*$, we must have
\begin{equation} \label{H}
    \sum_{i \in I} d_i \in \{-1,0,1\}.
\end{equation}
\begin{rk} \label{2v4v}
When there are $2$ vortices, it is clear that assumption \eqref{H} holds. For $4$ vortices, Proposition \ref{43} indicates that collisions cannot involve only vortices with degree $1$ or only $-1$. Thus assumption \eqref{H} is also satisfied in this case.
\end{rk}
\begin{defi} \label{passcoll}
We assume that if vortices $\{\mathbf P_i\}_{i \in I}$ collide at time $t=T_*$, then they annihilate each other if $\sum_{i \in I} d_i =0$. In this case, we restart \eqref{ode} for the remaining vortices with initial time $t=T_*$ and $\{\mathbf P_i\}_{i \in I}$ dropped. If $\sum_{i \in I} d_i = 1 (resp. -1)$, then we assume the colliding vortices $\{\mathbf P_i\}_{i \in I}$ combine to form a single vortex at $\mathbf P_*$ with degree $d_*=1 (resp. -1)$. Then we restart \eqref{ode} at time $t=T_*$ for the remaining vortices with $\{\mathbf P_i\}_{i \in I}$ dropped and $\mathbf P_*$ added.
\end{defi}
Using the definition above to extend the flow past collisions we may prove the following theorem that gives a clustering condition on initial data at $t=0$ that implies all vortices will eventually be annihilated. Here, for convenience, we assume the degrees $\{d_i\}_{i=1}^{2n}$ satisfy
\begin{equation} \label{degrees}
        d_i = \left\{\begin{array}{ll}
                        1 & \mbox{if $i$ is odd} \\
                       -1 & \mbox{if $i$ is even.}
                \end{array} \right.
\end{equation}

\begin{thm} \label{anh}
For fixed $0 \leq s <1$, let ${\mathcal A}_s = \{ (x,y,z) \in {\mathbb S}^2 : z \leq -\sqrt{1-s^2} \}$. Assume that vortices $\{{\mathbf P}_i \}_{i=1}^{2n}$ satisfy \eqref{odeS2} with $\mathbf P_i \in {\mathcal A}_s$ at $t=0$ for all $i$. Furthermore, assume that \eqref{H} holds at each collision time with the flow defined past collisions via Definition \ref{passcoll}. Then there are no vortices after a finite time if $\sqrt{1-s^2} > \frac{n-1}{n}$.
\end{thm}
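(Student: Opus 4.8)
The plan is to control the single scalar quantity $|\mathbf V_0|$, where $\mathbf V_0 = \sum_i \mathbf P_i$ is the sum over the vortices present at a given time, and to compare it against the current number of vortices $N$. Since every $\mathbf P_i$ lies on $S^2$, one always has the a priori bound $|\mathbf V_0| \le N$. The clustering hypothesis enters only through the initial value: if each $\mathbf P_i(0) \in \mathcal A_s$ then its $z$-coordinate is at most $-\sqrt{1-s^2}$, so the third component of $\mathbf V_0(0)$ is at most $-2n\sqrt{1-s^2}$, whence $|\mathbf V_0(0)| \ge 2n\sqrt{1-s^2} > 2n-2$ precisely when $\sqrt{1-s^2} > (n-1)/n$. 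Thus the hypothesis says exactly that $|\mathbf V_0(0)| > N(0) - 2$ with $N(0) = 2n$.

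First I would record that, between collisions, Proposition \ref{ExpD} gives $|\mathbf V_0(t)| = |\mathbf V_0(t_k^+)|\,e^{t-t_k}$, so $|\mathbf V_0|$ is strictly increasing whenever it is nonzero. Combined with $|\mathbf V_0|\le N$, this forces a vortex collision—so that the flow \eqref{odeS2} cannot be continued smoothly—within time $\ln\!\big(N/|\mathbf V_0(t_k^+)|\big)$; equivalently, this is the dichotomy of Corollary \ref{centerofmass}, which guarantees $T_1 < \infty$ whenever $\mathbf V_0 \neq 0$. I would therefore carry the inductive invariant that at the start of each phase (with $N$ vortices present) one has $|\mathbf V_0| > N - 2$. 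This both keeps $\mathbf V_0 \neq 0$ for $N \ge 2$ and bounds the phase length by $\ln\!\big(N/(N-2)\big)$ when $N \ge 3$ and by $\ln\!\big(2/|\mathbf V_0|\big)$ when $N=2$, each finite.

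The crux is to show the invariant survives a collision, and this is where I expect the only real (though short) work to lie. Suppose a set $I$ of vortices meets at a common point $\mathbf P_*\in S^2$, and write $N$, $N'$ for the counts just before and after. Definition \ref{passcoll} shows that in both the annihilation case ($\sum_{i\in I}d_i=0$) and the combination case ($\sum_{i\in I}d_i=\pm1$, which is all that assumption \eqref{H} permits) the post-collision sum is $\mathbf V_0' = \mathbf V_0^- - q\,\mathbf P_*$ with $q := N - N' \ge 1$ and $|\mathbf P_*| = 1$; if several points collide at once the same identity holds with $q$ the total drop in vortex number. Since $|\mathbf V_0^-| \ge |\mathbf V_0(t_k^+)| > N - 2$ by the phase growth, the reverse triangle inequality yields
\[
|\mathbf V_0'| \ge |\mathbf V_0^-| - q > (N-2) - q = N' - 2,
\]
so the invariant propagates; in particular $\mathbf V_0'\neq 0$, and one never gets stuck at a balanced configuration.

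Finally I would assemble the pieces: each collision strictly decreases $N$ (by at least $1$, with $N$ remaining even since the total degree stays zero), so only finitely many collisions occur; the invariant forces a collision in every nonempty phase and bounds each phase's duration; hence after finitely many phases of finite total length one reaches $N=0$, i.e.\ all vortices annihilate in finite time. It is worth emphasizing that this argument never requires the vortices to remain geometrically clustered in $\mathcal A_s$—the near-maximal center-of-mass magnitude $|\mathbf V_0| > N-2$, preserved across collisions, does all the work.
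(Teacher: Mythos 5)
Your proof is correct and rests on exactly the same mechanism as the paper's: the quantity $\mathbf V_0=\sum_i\mathbf P_i$, its exponential growth from Proposition \ref{ExpD} (which, as the paper also notes, must be observed to persist for each restarted system), the a priori bound $|\mathbf V_0|\le N$ forcing a collision whenever $\mathbf V_0\neq 0$, and the estimate that a collision event costs $\mathbf V_0$ at most $q$ in norm when $q$ vortices are removed. Where you differ is in the bookkeeping. The paper telescopes the whole history into the closed-form identity $\mathbf V_m(T_m)=e^{T_m}\bigl[\mathbf V_0(0)-\sum_{k=1}^m\mathbf U_k e^{-T_k}\bigr]$, bounds the accumulated loss by $2j_m\le 2(n-1)<|\mathbf V_0(0)|$, and derives a contradiction with $T_m$ being the last collision; you instead propagate the invariant $|\mathbf V_0|>N-2$ across each collision via the reverse triangle inequality and argue by induction on the (finite, since $N$ drops by an even number $\ge 2$ each time) number of phases. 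Your formulation is arguably cleaner: it makes explicit, uniformly at every intermediate stage, that $\mathbf V_0\neq 0$ and hence that the next collision occurs in finite time, a point the paper only verifies explicitly at the final stage. The paper's explicit formula, on the other hand, is what feeds directly into the subsequent corollary giving the quantitative annihilation time $T\le\ln(1/\kappa)$ with $\kappa=n\sqrt{1-s^2}-(n-1)$; your phase-by-phase bounds $\ln\bigl(N/|\mathbf V_0(t_k^+)|\bigr)$ establish finiteness of the total time but would need an extra telescoping step to recover that sharp constant. Both arguments correctly handle the combination case of Definition \ref{passcoll} (your observation that $\mathbf V_0'=\mathbf V_0^--q\,\mathbf P_*$ with $q=N-N'$ covers annihilation and recombination uniformly, matching the paper's pairing convention for the dropped vortices), and both use the clustering hypothesis only through $|\mathbf V_0(0)|\ge 2n\sqrt{1-s^2}>2(n-1)$.
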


\begin{proof}
Let
\[
    \mathbf V_0 := \mathbf P_{1} + \mathbf P_{2} + ...+\mathbf P_{2n}.
\]
Since $\mathbf P_i (0) \in {\mathcal A}$ for all $i$ and $s<1$, $|\mathbf V_0 (0)| \geq 2n \sqrt{1-s^2}>0$ . Then from Proposition \ref{ExpD}, since $\mathbf V_0(t) = \mathbf V_0(0)e^t$, there exists the first collision time $0<T_1 < \infty$. Assume that there exists a sequence of $m$ collision times $0<T_1<T_2<...<T_m<\infty$ with $m \geq 1$. Note that since each collision results in the annihilation of at least two vortices, $m \leq n$.

Since \eqref{H} holds at each collision time and the total degree of vortices is zero for all $t \geq 0$, there must always remain an even number of vortices in the restarted system of ODE's \eqref{ode} after the collision. Similarly, in light of Definition \ref{passcoll} and assumption \eqref{H}, we note that after the collision time $T_k$, an even number of vortices will be removed. For $k=1,2,...,m$, we define the total number of the removed vortices from $t=0$ to $t=T_k$ by $2j_k$. Recall that from \eqref{degrees}, the sign of the degree of a vortex $\mathbf P_i$ depends on the parity of the index $i$. Thus for each $1 \leq k \leq m$ we may relabel the indices of vortices such that at $t = T_k$,
\[
    \mathbf P_{2j_{k-1}+1} = \mathbf P_{2j_{k-1}+2}, \; \mathbf P_{2j_{k-1}+3} = \mathbf P_{2j_{k-1}+4} , \;..., \; \mathbf P_{2j_k-1} = \mathbf P_{2j_k},
\]
while
\[
    \mathbf P_i \neq \mathbf P_{i'} \mbox{ for } i \neq i', \; i,i'>2j_k,
\]
where $0=j_0 < j_1 < j_2 < ...< j_m \leq n$. Furthermore, for $t < T_k$
\[
    \mathbf P_{2j_k-1} \neq \mathbf P_{2j_k}.
\]
If $j_m = n$, then at $t = T_m$ all vortices have been annihilated. Therefore we will proceed by contradiction and assume that $j_m \leq n-1$. Now for each $1 \leq k \leq m$, set
\[
    \mathbf V_{k} := \mathbf P_{2j_k+1} + \mathbf P_{2j_k+2} + ...+\mathbf P_{2n},
\]
which is the sum of surviving vortices for $t \geq T_k$. We view each $\mathbf V_k$ as being defined on $[T_k, T_{k+1})$ with $\mathbf V_k(T_k)$ being a sum of vortices after appropriate removal of colliding vortices as in Definition \ref{passcoll}.
Then using the definition of $\mathbf V_0$ and $\mathbf V_{k}$, we have
\[
    \mathbf V_0 (T_1) = \mathbf U_{1} + \mathbf V_{1} (T_1), \mbox{ where } \mathbf U_{1} := \sum_{i=1}^{2j_1} \mathbf P_i (T_1),
\]
\[
    \mathbf V_{1} (T_2) = \mathbf U_{2} + \mathbf V_{2} (T_2), \mbox{ where } \mathbf U_{2} := \sum_{i=2j_1+1}^{2j_2} \mathbf P_i (T_2),
\]
and so on,
\[
     \mathbf V_{m-1} (T_m) = \mathbf U_{m} + \mathbf V_{m} (T_m), \mbox{ where } \mathbf U_{m} := \sum_{i=2j_{m-1}+1}^{2j_m} \mathbf P_i (T_m).
\]
Here for each $k$ we evaluated $\mathbf V_{k-1}(T_k)$ by taking the limit $t \rightarrow T_k^-$.
Note that after each collision time $T_k$, the restarted system of ODE's \eqref{ode} for remaining vortices $\{\mathbf P_i \}_{i=2j_k+1}^{2n}$ has the same structure as the original one i.e. Proposition \ref{ExpD} and Proposition \ref{Decay} hold for the restarted system. Hence from Proposition \ref{ExpD}, we have
\begin{equation} \label{ExpV}
    \frac{d }{dt} \mathbf V_{k-1} = \mathbf V_{k-1} \quad \mbox{for} \quad  t \in (T_{k-1}, T_k)
\end{equation}
for $1 \leq k \leq m$.
Applying \eqref{ExpV}, we derive
\begin{align}
    \mathbf V_{m} (T_m) &= \mathbf V_{m-1} (T_m) - \mathbf U_{m} \notag \\
    & =  \mathbf V_{m-1} (T_{m-1}) e^{T_m - T_{m-1}} - \mathbf U_{m} \notag \\
    & = [\mathbf V_{m-2}(T_{m-1}) - \mathbf U_{m-1}] e^{T_m - T_{m-1}} - U_{m} \notag \\
    & = [\mathbf V_{m-2} (T_{m-2}) e^{T_{m-1} - T_{m-2}} - \mathbf U_{m-1} ] e^{T_m - T_{m-1}} - \mathbf U_{m} \notag \\
    & = \mathbf V_{m-2} (T_{m-2}) e^{T_m - T_{m-2}} -  \mathbf U_{m-1} e^{T_m - T_{m-1}} - \mathbf U_{m}. \notag
\end{align}
Continuing this process finally we obtain
\begin{align} \label{vmfinal}
    \mathbf V_{m} (T_m) &= \mathbf V_0 (T_1) e^{T_m - T_1} - \sum_{k = 1}^m \mathbf U_{k} e^{T_m - T_k} \notag \\
    & = e^{T_m} [\mathbf V_0(0) - \sum_{k = 1}^m \mathbf U_{k} e^{-T_k} ].
\end{align}
Since $\sqrt{1-s^2} > \frac{n-1}{n}$,
\[
    |\sum_{k = 1}^m \mathbf U_{k} e^{-T_k} | \leq 2 j_m \leq 2(n-1) < 2n \sqrt{1-s^2} \leq |\mathbf V_0(0)|.
\]
Thus $|\mathbf V_{m} (T_m)| \neq 0$. But then from Proposition \ref{Decay}, there is at least one more collision after $t = T_m$ which contradicts the assumption that there are only $m$ collision times.
\end{proof}

\begin{cor}
Under the assumption of Theorem \ref{anh}, all vortices have been annihilated by the time $T = \ln \frac{1}{\kappa}$, where
\[
    \kappa := n \sqrt{1-s^2} - (n-1) >0.
\]
\end{cor}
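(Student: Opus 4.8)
The plan is to make the contradiction argument in the proof of Theorem \ref{anh} quantitative by tracking the norm of the sum of the surviving vortices as a function of time. I retain the notation $\mathbf{V}_k$, $\mathbf{U}_k$, $T_k$ and $j_k$ from that proof. First I record the geometric consequence of the clustering hypothesis: since every $\mathbf{P}_i(0)$ lies in $\mathcal{A}_s$, its $z$-coordinate is at most $-\sqrt{1-s^2}$, so the $z$-component of $\mathbf{V}_0(0)=\sum_{i=1}^{2n}\mathbf{P}_i(0)$ is at most $-2n\sqrt{1-s^2}$, whence $|\mathbf{V}_0(0)|\ge 2n\sqrt{1-s^2}$.

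Next I would upgrade the telescoping identity \eqref{vmfinal} from the terminal time $T_m$ to an arbitrary instant. For $t\in[T_k,T_{k+1})$ the surviving sum satisfies $\dot{\mathbf{V}}_k=\mathbf{V}_k$ by \eqref{ExpV}, so $\mathbf{V}_k(t)=\mathbf{V}_k(T_k)e^{t-T_k}$; inserting this into the same recursion that produced \eqref{vmfinal} gives
\[
\mathbf{V}_k(t)=e^{t}\Big[\mathbf{V}_0(0)-\sum_{l=1}^{k}\mathbf{U}_l\,e^{-T_l}\Big],\qquad t\in[T_k,T_{k+1}).
\]
On the left, $\mathbf{V}_k(t)$ is a sum of the $2(n-j_k)$ surviving unit vectors, so $|\mathbf{V}_k(t)|\le 2(n-j_k)$. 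On the right, each $\mathbf{U}_l$ is a sum of the $2(j_l-j_{l-1})$ vortices removed at time $T_l$, so $\sum_{l=1}^{k}|\mathbf{U}_l|\,e^{-T_l}\le\sum_{l=1}^{k}|\mathbf{U}_l|\le 2j_k$. Combining these with the reverse triangle inequality and the lower bound on $|\mathbf{V}_0(0)|$ yields
\[
e^{t}\big(2n\sqrt{1-s^2}-2j_k\big)\le 2(n-j_k).
\]

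It then remains to optimize over the removal count. Whenever vortices still survive we have $j_k\le n-1$, so the prefactor obeys $2n\sqrt{1-s^2}-2j_k\ge 2\kappa>0$ and we may rearrange to $e^{t}\le \phi(j_k)$, where $\phi(j):=(n-j)/(n\sqrt{1-s^2}-j)$. A direct computation gives $\phi'(j)=n(1-\sqrt{1-s^2})/(n\sqrt{1-s^2}-j)^2\ge 0$, so $\phi$ is nondecreasing on $\{0,1,\dots,n-1\}$ and attains its maximum $\phi(n-1)=1/\kappa$ at the endpoint. Hence $e^{t}\le 1/\kappa$ at every time $t$ at which some vortex survives, that is, such $t$ satisfies $t\le\ln(1/\kappa)=T$. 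Equivalently, no vortex can persist beyond $T$, which is the assertion.

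The bulk of this is routine once the displayed identity is in hand; the one step meriting care is the monotonicity of $\phi$, since the crude bound $|\mathbf{V}_k(t)|\le 2n$ would only yield $t\le\ln(n/\kappa)$. The sharper estimate $|\mathbf{V}_k(t)|\le 2(n-j_k)$, paired with the matching loss $2j_k$ on the right-hand side, is exactly what forces the worst case $j_k=n-1$ to collapse the bound to the advertised $1/\kappa$.
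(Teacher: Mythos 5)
Your proposal is correct and follows essentially the same route as the paper: both rest on the telescoped identity $\mathbf V_k(t)=e^{t}[\mathbf V_0(0)-\sum_{l\le k}\mathbf U_l e^{-T_l}]$ from the proof of Theorem \ref{anh}, the bounds $|\mathbf V_k|\le 2(n-j_k)$ and $\sum_l|\mathbf U_l|\le 2j_k$, and the observation that the resulting ratio $(n-j)/(n\sqrt{1-s^2}-j)$ is maximized at $j=n-1$, where it equals $1/\kappa$ (the paper phrases this last step via $\kappa<1$ at the final collision time rather than via your derivative computation, but it is the same estimate). No gaps.
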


\begin{proof}
Suppose $T_m$ is the last collision time, then $1 \leq j_{m-1} \leq n-1$. From \eqref{ExpV} and \eqref{vmfinal} we have
\begin{equation} \label{vm1}
    \mathbf V_{m-1} (T_m) = \mathbf V_{m-1} (T_{m-1}) e^{T_m - T_{m-1}},
\end{equation}
and
\begin{equation}
    \mathbf V_{m-1} (T_{m-1}) = e^{T_{m-1}} [\mathbf V_0 (0) - \sum_{k=1}^{m-1} \mathbf U_k e^{-T_k}].
\end{equation}
By the assumption of Theorem \ref{anh}, $|\mathbf V_0 (0)| \geq 2n \sqrt{1-s^2}$. On the other hand,
\[
    |\sum_{k=1}^{m-1} \mathbf U_k e^{-T_k}| \leq \sum_{k=1}^{m-1} |\mathbf U_k | \leq 2j_{m-1}.
\]
Thus
\begin{align} \label{vlowerbdd}
    |\mathbf V_{m-1} (T_{m-1})| & \geq e^{T_{m-1}} | \; |\mathbf V_0 (0)| - |\sum_{k=1}^{m-1} \mathbf U_k e^{-T_k}| \; | \notag \\
        & \geq e^{T_{m-1}} (2n \sqrt{1-s^2} - 2j_{m-1}) \notag \\
        & = 2 e^{T_{m-1}}  (\kappa + n -1 - j_{m-1}).
\end{align}
Substituting \eqref{vlowerbdd} into \eqref{vm1} we obtain
\[
    |\mathbf V_{m-1} (T_m)| \geq 2 e^{T_m} (\kappa + n -1 - j_{m-1}).
\]
Noting that $|\mathbf V_{m-1}| \leq 2(n-j_{m-1})$ and $\kappa <1$ we have
\begin{equation}
    T_m \leq \ln \left( \frac{n-j_{m-1}}{\kappa + n - 1 -j_{m-1}} \right) \leq \ln \frac{1}{\kappa}.
\end{equation}
\end{proof}

\begin{rk}
From Remark \ref{2v4v}, Theorem \ref{anh} indicates that all $\mathbf P_i$'s will vanish after a finite time if all $\mathbf P_i(0) \in {\mathcal A}_s$ for any $s<1$ when $n=1$, and any $s<\frac{\sqrt{3}}{2}$ when $n=2$. In fact, we have an explicit solution of \eqref{ode} when $n=1$: $\mathbf p_1 = (q,0)$, $\mathbf p_2 = (-q,0)$, where
\[
    q(t) = \sqrt{\frac{1+ce^t}{1-ce^t}},
\]
and
\[
               \left\{\begin{array}{ll}
                        c =0 & \mbox{if $q(0) =1$} \\
                        c>0  & \mbox{if $q(0) > 1$} \\
                        c<0  & \mbox{if $q(0) < 1$.} \\
                \end{array} \right.
\]
Note that $q(0) >1$ (resp. $q(0)<1$) implies that $\mathbf P_1(0)$ and $\mathbf P_2(0)$ are in the upper hemisphere (resp. lower hemisphere). Both cases lead to collision as predicted in Theorem \ref{anh}. When $n=2$, after the first collision time $T_1$, either all vortices are annihilated or there remain two vortices having different signs. In the latter scenario, the two-vortex flow then proceeds as described above.
\end{rk}

\section{Weighted Energy Identities for Ginzburg-Landau on ${S}^2$}
In this section we return to the PDE setting of \eqref{DGL} and we set ${\mathcal M} = {S}^2$. As a gradient flow, we of course
 know that solutions satisfy the standard dissipation rule
 \[
 \frac{d}{dt} E_{\e}(u)=- \int_{\mathcal M} \abs{u_t}^2\,dv_g,
 \]
 but our goal in this section is to derive weighted energy dissipation rules that we believe should have implications
 for vortex evolution on the $2$-sphere. As a by-product, we will derive necessary conditions on equilibrium solutions
 to \eqref{DGL} or \eqref{GP} that echo the symmetry requirements on vortex placement implied by Corollary \ref{centerofmass}.
 We wish to emphasize that, unlike the analysis in Sections 3 and 4, these results are not asymptotic in $\e$ but hold rather for
 any positive fixed $\e$.

 To this end, for $u: {S}^2 \times {\mathbb R}_+ \rightarrow \mathbb C$ we define the weighted energies $F_1,\,F_2$ and $F_3$  by
\begin{eqnarray*}
&&    F_1(u) := \int_{{S}^2} \left[ \frac{|\nabla_g u|^2}{2} + V(u) \right] \left(\frac{1-x_1}{2}\right) dv_g,\\
&&    F_2(u) := \int_{{S}^2} \left[ \frac{|\nabla_g u|^2}{2} + V(u) \right] \left(\frac{1-x_2}{2}\right) dv_g\quad\mbox{and}\\
&&    F_3(u) := \int_{{S}^2} \left[ \frac{|\nabla_g u|^2}{2} + V(u) \right] \left(\frac{1-x_3}{2}\right) dv_g,\\
\end{eqnarray*}
where $V(u) = \frac{(1-|u|^2)^2}{4\e^2}$ and we write the coordinates of any $\mathbf P \in {S}^2$ as $\mathbf P=(x_1,x_2,x_3)$. Again, since
the results to follow hold for any positive $\e$, we suppress the dependence of $\e$ in the notation for the weighted energies and in the solution $u$
to \eqref{DGL}.
Note that if $\mathbf p \in {\mathbb R}^2$ is the image of $\mathbf P$ via the stereographic projection mapping the north pole to $\infty$, then we have
\begin{equation}
     x_3 = \frac{r^2-1}{1+r^2}\quad\mbox{where}\;r^2=\abs{{\bf p}}^2.\label{north}
\end{equation}
Transforming $u$ via stereographic project (and still denoting it by $u$) we note that for example $F_3$ can be written as
\begin{equation} \label{wGE}
    F_3(u) = \int_{{\mathbb R}^2} \left[ \frac{|\nabla u|^2}{2} + \frac{4}{(1+r^2)^2}V(u) \right] w(r^2) dx,
\end{equation}
where $dx=dx_1\,dx_2$, $w(s) = \frac{1}{1+s}$ and $r^2 = |\mathbf p|^2$.
\begin{prop}\label{wee}
    Let $u$ be a solution to \eqref{DGL}. Then for $i=1,2$ and $3$, $F_i(u)$ satisfies
    \begin{equation}
         \frac{d}{dt} F_i(u) = -\int_{{S}^2}\left( \frac{1-x_i}{2}\right) |u_t|^2 + x_i V(u)\, dv_g.
    \end{equation}
\end{prop}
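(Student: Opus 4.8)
The plan is to compute $\frac{d}{dt}F_i(u)$ directly by differentiating under the integral sign, using the weight $\phi_i := \frac{1-x_i}{2}$, which is a fixed smooth function on $S^2$ (time-independent). Writing $e(u) = \frac{|\nabla_g u|^2}{2} + V(u)$, I would start from
\[
\frac{d}{dt}F_i(u) = \int_{S^2} \phi_i\, \frac{d}{dt} e(u)\, dv_g
= \int_{S^2} \phi_i \left[ \langle \nabla_g u, \nabla_g u_t\rangle_g - \frac{1-|u|^2}{\e^2} u\cdot u_t \right] dv_g,
\]
exactly as in \eqref{p12-1} but now without rescaling time (so the factor $|\ln\e|$ is absent and we work with \eqref{DGL} directly). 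The first term is then integrated by parts, producing $-\int_{S^2} u_t \cdot \langle \nabla_g \phi_i, \nabla_g u\rangle_g\,dv_g - \int_{S^2}\phi_i\, u_t\cdot \bigtriangleup_{\mathcal M} u\,dv_g$, and substituting the PDE \eqref{DGL} for $\bigtriangleup_{\mathcal M} u$ collapses the potential contributions to leave $-\int_{S^2}\phi_i|u_t|^2\,dv_g$ together with a leftover boundary/commutator term $-\int_{S^2} u_t\cdot \langle\nabla_g\phi_i,\nabla_g u\rangle_g\,dv_g$.

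The crux is therefore to show that this leftover term equals $-\int_{S^2} x_i V(u)\,dv_g$. I expect this to be the main obstacle, and it is where the special geometry of $S^2$ and the specific choice of weight $\phi_i$ must enter. The key computation is to understand $\langle\nabla_g \phi_i,\nabla_g u\rangle_g$ and, after integrating by parts once more against $u_t$ via the PDE, to identify $\bigtriangleup_{\mathcal M}\phi_i$ and $\mbox{Hess }\phi_i$. Since $\phi_i = \frac{1-x_i}{2}$ and the restriction of the ambient linear coordinate $x_i$ to $S^2$ is (up to scaling) a first spherical harmonic, it satisfies $\bigtriangleup_{S^2} x_i = -2 x_i$; hence $\bigtriangleup_{S^2}\phi_i = x_i$. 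This eigenfunction identity is what forces the appearance of the $x_i V(u)$ term: after integrating the term $\int\phi_i\frac{V(u)}{\e^2}$-type contributions by parts and using $\bigtriangleup_{S^2}\phi_i = x_i$, the potential pieces reorganize into precisely $-\int_{S^2} x_i V(u)\,dv_g$.

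Concretely, I would mirror the structure of Proposition \ref{P}: take the $\eta$-weighted computation there, specialize $\eta = \phi_i$, but instead of discarding the Hessian term I would track how it combines with the potential. The cleanest route is probably to avoid the full Bochner machinery and instead argue in stereographic coordinates using the representation \eqref{wGE}, where the weight becomes $w(r^2)$ and the conformal factor $\frac{4}{(1+r^2)^2}$ appears explicitly; there the eigenfunction property of $x_i$ manifests as a concrete ODE identity for $w$. I would verify the $i=3$ case first, since \eqref{north} and \eqref{wGE} give the most transparent formulas, and then note that $i=1,2$ follow by the rotational symmetry of $S^2$ (the three weights are related by rotating the coordinate frame, which leaves both $E_\e$ and the Laplace–Beltrami operator invariant). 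The remaining work is routine integration by parts once the eigenfunction identity $\bigtriangleup_{S^2}\phi_i = x_i$ is in hand.
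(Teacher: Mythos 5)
Your proposal is correct and follows essentially the same route as the paper: the paper's proof also passes to stereographic coordinates via the representation \eqref{wGE}, differentiates in time, integrates by parts, substitutes the PDE, and identifies the leftover term $\int u_t\cdot\langle\nabla u,\nabla w(r^2)\rangle\,dx$ with $\int_{S^2}x_3V(u)\,dv_g$ through a second (Pohozaev-type) integration by parts, treating the cases $i=1,2$ as identical. Your intrinsic explanation via $\bigtriangleup_{S^2}\phi_i=x_i$ (and, implicitly, $\mbox{Hess }\phi_i=\tfrac{x_i}{2}g$, which is what makes the Dirichlet contributions cancel) is a nice conceptual gloss that the paper leaves implicit, but the execution you propose is the paper's.
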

\begin{proof}
For simplicity of notation only in what follows we set $\e=1$. We will derive the identity for $F_3$. The other derivations
are identical.
Taking the derivative of \eqref{wGE} we obtain
\[
    \frac{d}{dt} F_3(u) = \int_{{\mathbb R}^2} \left[ \langle \nabla u_t, \nabla u \rangle  - \frac{4}{(1+r^2)^2} (1-|u|^2)u \cdot u_t \right] w(r^2)\, dx.
\]
Integrating by parts we have
\[
   \int_{{\mathbb R}^2} \langle \nabla u_t, \nabla u \rangle w(r^2) dx = -\int_{{\mathbb R}^2} w(r^2) u_t \cdot \Delta u + u_t \cdot \langle \nabla u, \nabla  w(r^2) \rangle\, dx.
\]
Thus
\begin{align} \label{dtdF}
    \frac{d}{dt} F(u) & = - \int_{{\mathbb R}^2} \frac{4 w(r^2)}{(1+r^2)^2} u_t \cdot \left[ \frac{(1+r^2)^2}{4} \Delta u + (1-|u|^2)u \right] + u_t \cdot  \langle \nabla u, \nabla  w(r^2) \rangle\, dx \notag \\
        & = - \int_{{\mathbb R}^2} \frac{4 w(r^2)}{(1+r^2)^2} |u_t|^2 + u_t \cdot  \langle \nabla u, \nabla  w(r^2) \rangle \, dx.
\end{align}
Here in the last equality we used the fact that from \eqref{DGL}$, u$ satisfies
\begin{equation} \label{utplane}
    u_t  = \frac{(1+r^2)^2}{4} \Delta u + (1-|u|^2)u \; \mbox{ in } {\mathbb R}^2.
\end{equation}
Since $\nabla w(r^2) = 2w'(r^2)x = -\frac{2x}{(1+r^2)^2}$, applying \eqref{utplane} again we have
\begin{align}
    \int_{{\mathbb R}^2} &  u_t \cdot  \langle \nabla u, \nabla  w(r^2) \rangle\, dx \notag \\
        &= \int_{{\mathbb R}^2} \left[ \frac{(1+r^2)^2}{4} \Delta u + (1-|u|^2)u \right] \cdot  2w'(r^2) \langle \nabla u, x \rangle\, dx \notag \\
        & = - \int_{{\mathbb R}^2} \left[ \frac{(1+r^2)^2}{4} \Delta u + (1-|u|^2)u \right] \cdot \frac{2}{(1+r^2)^2} \langle \nabla u, x \rangle \,dx. \notag
\end{align}
Then integrating by parts twice we find
\begin{align} \label{utgugw}
     \int_{{\mathbb R}^2} &  u_t \cdot  \langle \nabla u, \nabla  w(r^2) \rangle \notag \\
        = & \int_{{\mathbb R}^2} \frac{1}{2} \langle \nabla u, \nabla \langle \nabla u, x \rangle \rangle +  \frac{2}{(1+r^2)^2} \langle \nabla V(u) , x\rangle\, dx \notag \\
        = & \int_{{\mathbb R}^2} \frac{1}{2} \left( |\nabla u|^2 +\frac{1}{2} \langle \nabla |\nabla u|^2, x \rangle \right) +  \frac{2}{(1+r^2)^2} \langle \nabla V(u) , x\rangle\, dx \notag \\
        = & -  \int_{{\mathbb R}^2} \left[ \frac{4}{(1+r^2)^2} - \frac{8r^2}{(1+r^2)^3} \right] V(u)  \, dx \notag \\
        = & \int_{{\mathbb R}^2} \left( \frac{r^2 -1}{ 1+r^2}\right) V(u)  \frac{4}{(1+r^2)^2}\, dx.
\end{align}
Combining \eqref{north}, \eqref{dtdF} and \eqref{utgugw} we obtain
\begin{align}
    \frac{d}{dt} F_3(u) & =  - \int_{{\mathbb R}^2} \left[ w(r^2)|u_t|^2 + \left(\frac{r^2 -1}{ 1+r^2}\right) V(u)  \right] \frac{4}{(1+r^2)^2}\, dx \notag \\
        & = -\int_{{S}^2} \left(\frac{1-x_3}{2}\right) |u_t|^2 + x_3 V(u) \, dv_g.
\end{align}
\end{proof}
We view Proposition \ref{wGE} as a tool for studying vortex annihilation for fixed $\e$. Note for example, that if all vortices initially
reside in the first quadrant so that $V(u(x,0))$ is significantly larger in the first quadrant than in all others, one would have
\[
\int_{{S}^2} x_i V(u) dv_g>0\quad\mbox{at time}\;t=0\quad\mbox{for}\;i=1,2,3.
\]
Hence, along with the standard energy $E_{\e}$, by Proposition \ref{wGE}, the three weighted energies would also dissipate for some positive interval of time.
This could provide the basis for an annihilation result for \eqref{DGL} analogous to Theorem \ref{anh} for the ODE flow. Carrying out such an argument remains
work in progress.

Finally, we note the following:
\begin{cor}\label{symmetry}
Any stationary solution $u$ to \eqref{DGL}, or equivalently, to \eqref{GP} on $S^2$ must satisfy the first moment identities
    \begin{equation}
         \int_{{S}^2} x_1V(u)\, dv_g = \int_{{S}^2} x_2V(u)\, dv_g = \int_{{S}^2} x_3 V(u)\, dv_g =0.\label{symS2}
    \end{equation}
\end{cor}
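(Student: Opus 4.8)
The plan is to read this off directly from Proposition \ref{wee}. The key observation is that a stationary solution is precisely a time-independent critical point of $E_\e$, i.e. a solution of the Euler--Lagrange equation $\bigtriangleup_{S^2} u + \frac{u}{\e^2}(1-|u|^2)=0$. For the heat flow \eqref{DGL} this is exactly the statement $u_t\equiv 0$, and for the Gross--Pitaevskii flow \eqref{GP} an equilibrium solution likewise satisfies $iu_t=0$, hence again $u_t\equiv 0$ and the same elliptic profile equation. This is what justifies the phrase ``or equivalently'' in the statement: the identities to be proved are a property of the critical point $u$ itself, not of the particular dynamics used to detect it.

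With $u_t\equiv 0$ in hand, I would simply substitute into the three identities of Proposition \ref{wee}. Since a stationary $u$ renders each $F_i(u)$ constant in time, the left-hand side $\frac{d}{dt}F_i(u)$ vanishes. On the right-hand side the weighted kinetic term $\int_{S^2}\left(\frac{1-x_i}{2}\right)|u_t|^2\,dv_g$ vanishes as well, since $u_t\equiv 0$. What remains is
\[
0 = -\int_{S^2} x_i\,V(u)\,dv_g \qquad \mbox{for } i=1,2,3,
\]
which is precisely \eqref{symS2}.

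There is essentially no analytic obstacle here, as the entire content has been front-loaded into the weighted dissipation identity of Proposition \ref{wee}; the corollary is an immediate consequence of setting $u_t=0$. The only point warranting a word of care is the claimed equivalence between stationary solutions of \eqref{DGL} and \eqref{GP}: one must observe that ``stationary'' for either flow forces $u_t\equiv 0$ and therefore the common profile equation above, after which the three moment identities follow at once from the three cases $i=1,2,3$ of the proposition.
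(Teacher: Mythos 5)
Your proposal is correct and is exactly the argument the paper intends (the paper in fact omits the proof, presenting Corollary \ref{symmetry} as an immediate consequence of Proposition \ref{wee}): a stationary solution has $u_t\equiv 0$, so both $\frac{d}{dt}F_i(u)$ and the weighted $|u_t|^2$ term vanish, leaving $\int_{S^2}x_i V(u)\,dv_g=0$ for $i=1,2,3$. Your remark that stationarity for either \eqref{DGL} or \eqref{GP} forces the same elliptic profile equation correctly accounts for the ``or equivalently'' in the statement.
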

One can view these conditions as in some sense balance laws for the placement of vortices in critical points of Ginzburg-Landau on $S^2$.
In this light, they can be compared with the balance condition derived in Corollary \ref{centerofmass} for collections of vortices representing critical points of the renormalized
energy W (cf. \eqref{rne}) on $S^2$. It remains a challenging open problem to establish a type of symmetry result for the vortices of critical points on $S^2$ but
\eqref{symS2} at least points in this direction.

\end{document}